\documentclass[a4paper,10pt]{article}

\usepackage[english]{babel} 

\usepackage[A4]{vmargin} 
\usepackage{multicol} 
\usepackage{graphicx} 
\usepackage{color, colortbl} 
\usepackage{lscape} 

\usepackage{bbm}
\usepackage{dsfont}
\usepackage{amsfonts}
\usepackage{amssymb}
\usepackage{mathrsfs}
\usepackage{amsthm}
\usepackage{amsmath}
\DeclareMathAlphabet{\mathpzc}{OT1}{pzc}{m}{it}
\usepackage{stmaryrd}
\usepackage{amsthm}
\usepackage{hyperref}
\hypersetup{                    
    colorlinks=true,                
    breaklinks=true,                
    urlcolor= blue,                 
    linkcolor= red,                
    citecolor= blue                
    }
\usepackage{verbatim} 
\usepackage{enumerate} 
\usepackage{eurosym} 
\usepackage{algorithm,algorithmic} 
\usepackage{pstricks,pst-node,pst-tree,pstricks-add}

\usepackage[toc,page]{appendix}

\usepackage{makeidx}



\newtheorem{theorem}{Theorem}[section]
\newtheorem{proposition}[theorem]{Proposition}
\newtheorem{corollary}[theorem]{Corollary}
\theoremstyle{definition}
\newtheorem{definition}[theorem]{Definition}

\theoremstyle{plain}
\newtheorem{remark}{Remark}[section]
\theoremstyle{plain}

\theoremstyle{notation}

\numberwithin{equation}{subsection} 

%
%


\setmarginsrb{3cm}{2cm}{3cm}{2cm}{2cm}{0cm}{0cm}{1cm}


\newcommand\pare[1]{\left(#1\right)}
\newcommand\va[1]{\left|#1\right|}

\newcommand\croch[1]{\left[#1\right]}





\newcommand{\dsp}{\displaystyle}
\newcommand{\mbf}{\mathbf}

\newcommand{\R}{\mbf{R}}

\newcommand{\T}{\mbf{T}}

\title{High-Field Limit from a Stochastic BGK Model to a Scalar Conservation Law with Stochastic Forcing}
\author{Nathalie Ayi}
\date{}

\makeindex

\begin{document}
\maketitle

\begin{abstract}
\noindent We study the derivation of a scalar conservation law with stochastic forcing starting from a stochastic BGK model with a high-field scaling. We prove the convergence to a new kinetic formulation where appears a modified Maxwellian. We deduce from it the existence of a weak solution to the scalar conservation law with stochastic forcing. We establish that this solution satisfies some Krushkov-like entropy relations. 
\end{abstract}

\noindent \textbf{Keywords.} Stochastic conservation laws, high field limit, hydrodynamical limit, BGK model, kinetic formulation, modified Maxwellian, Krushkov-like entropies.

\section{Introduction}

In this paper, we are interested in the derivation of a solution to a scalar conservation law with stochastic forcing of the following form: 
\begin{equation}
\label{stochasticlaw}
du + div_x(B(x,u)) dt = C(x,u) dW_t, ~~~~~~t \in (0,T), ~x \in \T^N,
\end{equation}
from a stochastic BGK like model, 
\begin{equation}
\label{highfieldBGK}
 \dsp{dF^\varepsilon + div_x ( a(x,\xi)F^\varepsilon) dt + \frac{\Lambda(x)}{\varepsilon} \partial_\xi F^\varepsilon dt = \frac{\mathbf{1}_{u_\varepsilon> \xi} - F^\varepsilon}{\varepsilon} dt - \partial_\xi F^\varepsilon \Phi dW + \frac{1}{2} \partial_\xi (G^2 \partial_\xi F^{\varepsilon}) dt}, 
\end{equation}
when $\varepsilon$ goes to $0$. \\

\noindent This type of problem belongs to the class of problems of  hydrodynamical limits. The historical issue in that field goes back to the kinetic theory, first introduced by Maxwell and Boltzmann in order to model rarefied gas. Indeed, adopting a statistical point of view, a gas can be described by its  density of particles $f_\varepsilon$, a function of time, position and velocity, satisfying at  the mesoscopic level, the Boltzmann equation
\begin{equation}
\partial_t f_\varepsilon(t,x,\xi) + v. \nabla_x f_\varepsilon(t,x,\xi) = \frac{Q(f_\varepsilon,f_\varepsilon)}{\varepsilon}, ~~~~~ t \in \R_t^+, x  \in \R^d_x, \xi \in \R^n_\xi,
\end{equation}
where $Q(f_\varepsilon,f_\varepsilon)$ is a collision operator. Formally in that case, we obtain the following hydrodynamical limit: $f_\varepsilon$ converges to a Maxwellian of parameter $\rho(t,x)$, $u(t,x)$ and $T(t,x)$ such that those three functions satisfy the Euler system. In other words, starting from the Boltzmann equation at the mesoscopic level, we expect to obtain the Euler system at the macroscopic level. However, this derivation is actually still an open question and motivated the introduction of the BGK model in \cite{BGK}, tough partial results have been established as for example in the case of the incompressible Euler system for ``well prepared initial data'' of the asymptotic equations or solution to the scaled Boltzmann equation with additional non uniform a priori estimates (see \cite{limitehydroboltz} for a more complete state of the art). The idea of the BGK model  is to replace the collision kernel by an easier term which keeps some properties associated to the Boltzmann equation. Those models have been then generalized by  Perthame and Tadmor \cite{PerTad} to derive at the macroscopic level some scalar conservation laws. More precisely, the BGK model that they investigate in \cite{PerTad} is the following: 
\begin{equation}
\label{BGKdeter}
(\partial_t + a(\xi). \partial_x) f_\varepsilon(t,x,\xi) = \frac{1}{\varepsilon} \croch{\chi_{u_\varepsilon(t,x)}(\xi)-f_\varepsilon(t,x,\xi)}, ~~~~~~~(t,x,\xi) \in \R_t^+ \times \R_x^d \times \R_\xi
\end{equation}
with  $\dsp{u_\varepsilon(t,x) = \int_\R f_\varepsilon(t,x,\xi) d\xi}$ and where 
$
\dsp{\chi_u(\xi) }= \left\{
\begin{array}{rl}
\dsp{\text{sgn} ~u } &  \text{if} ~(u-\xi) \xi \geq 0,\\
0&  \text{if} ~(u-\xi) \xi < 0.
\end{array} \right. 
$\\
At the macroscopic level, they derive the following multidimensional law 
\begin{equation}
\partial_t u(t,x) + \sum_{i=1}^d \partial_{x_i} A_i(u(t,x))=0, ~~~~~~~(t,x) \in \R_t^+ \times \R_x^d,
\end{equation}
with  $A_i \in \mathcal{C}^1(\R)$ and $a_i = A_i'$.
Two main approaches are actually possible to get this scalar conservation law from  \eqref{BGKdeter}. The initial one in \cite{PerTad} is based on a use of BV compactness arguments in the multidimensional case and compensated compactness arguments in the one dimensional case, while the second one introduced later in \cite{LionsPer} relies on what is called a kinetic formulation. It consists in obtaining at the macroscopic level a kinetic equation involving a density-like function whose velocity distribution is the equilibrium density. \\

\noindent Lately, the study of scalar conservation laws starting from BGK models at the mesoscopic level have been pursued in various context:  with boundary conditions in \cite{Nouri} and \cite{BerthelinBouchut}, with a discontinuous in the space variable flux in \cite{Discont}, with a high-field scaling in \cite{BerthPoup} or again in a stochastic context in \cite{Hof}. In this paper, we are interested in the last two contexts mentioned. First of all, let us say a word about the stochastic one. Recently, the study of some conservation laws with stochastic forcing has been a subject of growing interest (see \cite{Bauz}, \cite{DebVov},\cite{Weinan}, \cite{Feng}, \cite{Hold}, \cite{Kim} or again \cite{BerthelinVovellesystem} for the case of a system). Indeed, the introduction of such terms can be justified in order to translate numerical and empirical uncertainties. Moreover, it often offers the possibility to weaken assumptions and still get results. The first result on an hydrodynamical limit starting from a BGK model in a stochastic context is due to Hofmanova \cite{Hof}. The idea in this paper is to use the notion of stochastic kinetic formulation developed by Debussche and Vovelle in \cite{DebVov}. Certainly, it seems quite natural to adopt this approach knowing that we can not get any pathwise $L^\infty$ a priori estimates due to the presence of the white noise term.\\

\noindent What we intend to do here is to extend the above result to a stochastic BGK model containing a force term with a high-field scaling. The deterministic version of this result has been established by Berthelin, Poupaud and Mauser in \cite{BerthPoup}. However, due to the presence of the white noise terms, the techniques adopted through the paper will be the ones developed by Debussche and Vovelle \cite{DebVov} and Hofmanov\'a \cite{Hof} by proving the convergence to a kinetic formulation associated to \eqref{stochasticlaw}. The scaling adopted being different, some new difficulties are introduced and some additional assumptions similar to the ones of the deterministic version will be adopted. Moreover, a modified Maxwellian will be obtained in the kinetic formulation leading by its properties to get some Krushkov-like inequalities. \\

\noindent The organization of the paper is the following: in section \ref{context} we will present the context and state the main result. Section \ref{BGKmodel} will be dedicated to the study of the stochastic BGK model \eqref{highfieldBGK}. We will state its existence and prove its convergence to a kinetic formulation. Finally, in section \ref{conservationlaw} we will conclude to the existence of a weak solution to \eqref{stochasticlaw} which satisfies some Krushkov-like entropy relations.

\section{Settings and Main Results}
\label{context}

\noindent In the following, we will denote by $\mathcal{C}^{k,\mu}$ with $ k \in \mathbf{N}$ and $\mu >0$ the set of  $k$ times differentiable $\mu$-H\"older functions. Let $(\Omega, \mathcal{F}, (\mathcal{F}_t)_{t \geq 0}, \mathbb{P})$ be a stochastic basis with a complete, right-continuous filtration. We can assume without loss of generality that the $\sigma-$algebra $\mathcal{F}$ is countably generated and $(\mathcal{F}_t)_{t \geq 0}$ is the completed filtration generated by the Wiener process and the initial condition. We denote by $\mathcal{P}$ the predictable $\sigma$-algebra on $\Omega \times [0,T]$ associated to $(\mathcal{F}_t)_{t \geq 0 }$ and by $\mathcal{P}_s$ the predictable $\sigma$-algebra on $\Omega \times [s,T]$ associated to $(\mathcal{F}_t)_{t \geq s }$. We write $L_{\mathcal{P}_s}^\infty(\Omega \times [s,T] \times \T^N \times \R)$ to denote 
\begin{equation*}
L^\infty( \Omega \times [s,T] \times \T^N \times \R, ~ \mathcal{P}_s \otimes \mathcal{B}(\T^N) \otimes \mathcal{B}(\R), ~ d \mathbb{P} \otimes dt \otimes dx \otimes d \xi ). 
\end{equation*}
and idem for $L_{\mathcal{P}}^\infty(\Omega \times [s,T] \times \T^N \times \R)$ 
\noindent The setting of our paper has some similarities with the ones of 
Debussche and Vovelle \cite{DebVov} and Hofmanov\'a \cite{Hof}. Indeed, we will assume that we work on a finite-time interval $[0,T]$, $T>0$ with periodic boundary conditions, $x$ belonging to $\mathbf{T}^N$ where $\mathbf{T}^N$ is the  $N$-dimensional torus. We will consider a  fonction 
 \begin{equation}
 a=(a_1, \dots, a_N): \T^N \times \R \to \R^N
 \end{equation}
 of class $\mathcal{C}^{3,\mu}$ for some $\mu>0
 $ such that its  satisfies for all $x,\xi$ ,
\begin{equation}
\label{positivitediv}
 0 \leq div_x  (a(x,\xi))
\end{equation} 
 and for all $i=1, \dots, N$, for all $x \in \T^N$, 
 \begin{equation}
 \label{hypo_a}
 \int_{\R} |\xi ~ a_i(x,\xi) |d \xi < + \infty.
 \end{equation}
We assume that W is a d-dimensional $(\mathcal{F}_t)$-Wiener process, defined as follows
 \begin{equation}
 \label{wiener}
 W(t) = \sum_{i=1}^d \beta_k(t) e_k
 \end{equation}
 where $(\beta_k)_{k=1}^d$ are mutually independent Brownian processes, $(e_k)_{k=1}^d$ an orthonormal basis of $H$ a finite dimensional Hilbert space. \\
 \noindent For each $u \in \R$, $\Phi(u): H \to  L^2(\T^N)$ is defined by $\Phi(u)e_k = g_k(u)$ where $g_k(.,u)$ is a regular function on $\T^N$. More precisely, the functions  $g_1, \dots, g_d: \T^N \times \R \to \R$ are of class $\mathcal{C}^{4,\mu}$  with linear growth and bounded derivatives of all orders. In that context, the following estimate holds true 
 \begin{equation}
 \label{inegG}
 G^2(x,\xi) = \sum_{k=1}^d |g_k(x,\xi)|^2 \leq C(1+|\xi|^2), ~~~~  \forall x \in \T^N, \xi \in \R .
 \end{equation}
Moreover, we assume that 
\begin{equation}
\label{gk0}
g_k(x,0)=0, ~~~ \forall x \in \T^N, ~k=1, \dots, d,
\end{equation}
and for all $x \in \T^N$ and for all $k$,
\begin{equation}
\label{hypo_g}
\int_\R | \xi ~ \partial_\xi g_k(x,\xi)| d \xi < + \infty.
\end{equation}
In addition, we assume that 
\begin{equation}
\label{negativitelambda}
 \Lambda: \T^N \to \R ~\text{is a nonpositive function of class} ~\mathcal{C}^{4,\mu}.
\end{equation} 

\noindent Regarding the initial data, we suppose that $u_0 \in L^p(\Omega; L^p(\T^N))$ for all $p \in [1, \infty)$ and we will consider $F_0=\mathbf{1}_{u_0 > \xi}$.\\

\noindent Finally, quite similarly to the deterministic case with a high-field scaling \cite{BerthPoup}, we will assume that 
\begin{equation}
\label{hypo_feps}
(\int_\mathbf{R} |\xi ~ f_\varepsilon(\omega,t,x,\xi)| d \xi)  ~\text{is bounded in}~ L^\infty(\Omega \times [0,T] \times \T^N), 
\end{equation}
where $f_\varepsilon := F_\varepsilon + \mathbf{1}_{0>\xi}$.\\

\noindent We finally can state our main result: 
\begin{theorem}
Under  assumptions \eqref{positivitediv}, \eqref{hypo_a}, \eqref{wiener} \eqref{inegG}, \eqref{gk0}, \eqref{hypo_g}, \eqref{negativitelambda} and \eqref{hypo_feps}, for any $\varepsilon>0$ there exists a weak solution to the stochastic BGK model with a high field scaling \eqref{highfieldBGK} denoted by $F_\varepsilon$ with initial condition $F_0 = \mathbf{1}_{u_0> \xi}$. Moreover, $F_\varepsilon$ belongs to $L_{\mathcal{P}}^\infty(\Omega \times [0,T] \times \T^N \times \R)$ and converges weak-$*$ to $M_u$ a modified Maxwellian associated to $u$ where $u$ is a weak solution to the conservation law with stochastic forcing \eqref{stochasticlaw} with 
\begin{equation}
B(x,u) = \int_{-\infty}^u \int_0^{+\infty} a(x, \xi+v\Lambda(x)) e^{-v} dv d\xi
\end{equation}
and 
\begin{equation}
C(x,u) = \int_{-\infty}^u \int_0^{+\infty} \partial_\xi \Phi(x,\xi+v\Lambda(x)) e^{-v} dv d\xi.
\end{equation}
Denoting   $u_\varepsilon(t,x) := \int_\R f_\varepsilon(t,x,\xi) d\xi$, the sequence of local densities $(u_\varepsilon)_{\varepsilon>0}$ converges to the weak solution $u$ in $L^p(\Omega \times [0,T] \times \T^N)$ for all $p \in [1, + \infty)$. In addition, $u$ satisfy the Krushkov-like inequalities \eqref{Krushkovineg}. 
\end{theorem}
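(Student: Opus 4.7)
The plan is to follow the stochastic kinetic approach of Debussche--Vovelle \cite{DebVov} and Hofmanov\'a \cite{Hof}, adapted to accommodate the singular drift $\Lambda(x)/\varepsilon\, \partial_\xi F^\varepsilon$ coming from the high-field scaling as in the deterministic treatment of \cite{BerthPoup}. The argument splits naturally into three stages: existence at fixed $\varepsilon$, compactness together with identification of the modified Maxwellian, and derivation of the macroscopic law with its Krushkov-like inequalities.

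For fixed $\varepsilon > 0$, I would construct $F_\varepsilon$ by a Picard iteration on the map that freezes $u_\varepsilon$ on the right-hand side of \eqref{highfieldBGK}. Each step reduces to a linear stochastic Fokker--Planck-type equation, for which the machinery of \cite{DebVov} yields existence, uniqueness, and an $L^\infty$ bound; the datum $F_0 = \mathbf{1}_{u_0 > \xi} \in [0,1]$ together with the cancellation \eqref{gk0} propagate these bounds. Contraction is obtained on a short time interval through the $1/\varepsilon$ prefactor of the collision term and then iterated to cover $[0,T]$; predictability is preserved at every step, yielding $F_\varepsilon \in L^\infty_{\mathcal{P}}(\Omega \times [0,T] \times \T^N \times \R)$.

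For the passage to the limit, the uniform $L^\infty$ bound combined with the moment control \eqref{hypo_feps} extracts a weak-$*$ limit $M$ of $F_\varepsilon$ along a subsequence; strong compactness of $u_\varepsilon = \int f_\varepsilon\, d\xi$ in $L^p$ follows from averaging lemmas in the spirit of \cite{LionsPer} together with a Skorokhod/martingale representation for the stochastic terms \`a la \cite{Hof}. Multiplying \eqref{highfieldBGK} by $\varepsilon$ and passing to the distributional limit forces
\begin{equation*}
\Lambda(x)\, \partial_\xi M \;=\; \mathbf{1}_{u > \xi} - M,
\end{equation*}
a linear ODE in $\xi$ whose unique bounded solution, by the method of integrating factors and the sign condition \eqref{negativitelambda}, is
\begin{equation*}
M(x,\xi) \;=\; \int_0^{+\infty} \mathbf{1}_{u > \xi - v\Lambda(x)}\, e^{-v}\, dv \;=:\; M_u(x,\xi).
\end{equation*}
A change of variables $\eta = \xi - v\Lambda(x)$ in the fluxes $\int a(x,\xi) M_u\, d\xi$ and $\int \partial_\xi \Phi(x,\xi) M_u\, d\xi$ (after one integration by parts in $\xi$ for the noise term, justified by \eqref{hypo_a} and \eqref{hypo_g}) produces precisely the formulas for $B(x,u)$ and $C(x,u)$ stated in the theorem.

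Integrating the limiting kinetic equation in $\xi$ then yields \eqref{stochasticlaw}, and the Krushkov-like inequalities are obtained by testing the limiting stochastic kinetic formulation against entropy pairs built from $M_u$: the monotonicity $\partial_\xi M_u \leq 0$ (forced by $\Lambda \leq 0$ together with the monotonicity of $\mathbf{1}_{u > \cdot}$) is exactly what converts the kinetic dissipation carried by a nonnegative limiting defect measure on $\T^N \times \R \times [0,T]$ into macroscopic Krushkov dissipation for $u$. The principal obstacle I anticipate is the simultaneous handling of two singular features under only weak-$*$ convergence of $F_\varepsilon$: the It\^o correction $\tfrac{1}{2}\partial_\xi(G^2\partial_\xi F_\varepsilon)$ together with the stochastic integral $-\partial_\xi F_\varepsilon\, \Phi\, dW$ on the one hand, and the singular drift $\Lambda/\varepsilon\, \partial_\xi F_\varepsilon$ on the other. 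These interact through the $\xi$-derivative and demand a careful combination of Young-measure/velocity-averaging arguments with a Skorokhod representation to rule out spurious defects in the limit; the cancellation \eqref{gk0}, the moment bound \eqref{hypo_feps}, and the positivity \eqref{positivitediv} are the essential structural ingredients that make this work.
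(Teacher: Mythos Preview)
Your plan is close in spirit to the paper's but reverses the logic at the crucial identification step, and this is where a gap appears. You propose to secure strong $L^p$ compactness of $u_\varepsilon$ \emph{first}, via velocity-averaging lemmas combined with a Skorokhod representation, and then use it to pass to the limit in the nonlinear term $\mathbf{1}_{u_\varepsilon>\xi}$ so as to read off the ODE $\Lambda\,\partial_\xi M=\mathbf{1}_{u>\xi}-M$. But stochastic averaging lemmas are not available as a black box, and the paper explicitly remarks that the characteristic-based techniques of \cite{Hof} do \emph{not} transfer here precisely because the characteristic system depends singularly on $\varepsilon$ through the drift $\Lambda/\varepsilon$. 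No averaging lemma and no Skorokhod argument is used anywhere in the paper; the assumptions \eqref{hypo_a}, \eqref{hypo_g}, \eqref{hypo_feps} are introduced exactly to bypass that route.

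The paper instead combines the two singular contributions into a single nonnegative measure
\[
m_\varepsilon(t,x,\xi)=\frac{1}{\varepsilon}\int_{-\infty}^{\xi}\bigl(\mathbf{1}_{u_\varepsilon>\zeta}-F_\varepsilon\bigr)\,d\zeta\;+\;(-\Lambda(x))\,F_\varepsilon(t,x,\xi),
\]
bounds its mass uniformly in $\varepsilon$ using \eqref{hypo_a}, \eqref{hypo_g}, \eqref{hypo_feps}, and passes directly to the kinetic formulation with a limiting defect measure $m$. The identification $F+\Lambda\,\partial_\xi F=\mathbf{1}_{u>\xi}$ is then obtained \emph{without} any prior strong convergence: one introduces the Young-measure limit $\nu$ of $\delta_{u_\varepsilon}$ and proves, via a product/doubling argument with a time mollifier $\theta_\delta$ and the scaling $\delta=\varepsilon^{1/4}$, that
\[
\mathbb{E}\bigl\langle\,(F+\Lambda\,\partial_\xi F)\bigl(1-(F+\Lambda\,\partial_\xi F)\bigr),\ \cdot\ \bigr\rangle\le 0,
\]
whence $F+\Lambda\,\partial_\xi F\in\{0,1\}$ a.e.\ and $\nu=\delta_{u}$. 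Strong convergence $u_\varepsilon\to u$ in $L^p$ is then a \emph{consequence} (Young measure concentrates on a Dirac), not an input. Your treatment of existence (Picard iteration in place of the paper's stochastic characteristics plus Duhamel representation) and your outline for the Krushkov-like inequalities are otherwise broadly compatible with the paper, but the compactness/identification step as you wrote it would not go through without substantial additional work.
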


\section{Study of the stochastic BGK model with a high field scaling}
\label{BGKmodel}
\subsection{The stochastic kinetic model}
\label{stoch_kin_mod}
We are interested in the following stochastic BGK model:
\begin{equation}
\label{BGKstoc}
  \left\{ 
\begin{array}{lcr}
 \dsp{dF^\varepsilon + div_x ( a(x,\xi)F^\varepsilon) dt + \frac{\Lambda(x)}{\varepsilon} \partial_\xi F^\varepsilon dt = \frac{\mathbf{1}_{u_\varepsilon> \xi} - F^\varepsilon}{\varepsilon} dt - \partial_\xi F^\varepsilon \Phi dW + \frac{1}{2} \partial_\xi (G^2 \partial_\xi F^{\varepsilon}) dt,}  \\ 
F_\varepsilon(0,x,\xi)= \mathbf{1}_{u_0(x)> \xi}(\xi).
\end{array}
\right. 
\end{equation}

\noindent The definition of a weak solution of this system is the following:
\begin{definition}
Let $\varepsilon >0$, $F_\varepsilon \in L_{\mathcal{P}}^\infty(\Omega \times [0,T] \times \T^N \times \R)$ is called a weak solution of \eqref{BGKstoc} if for any test function $\phi \in \mathcal{C}_c^\infty(\T^N \times \R)$, we have a.e. $t \in [0,T]$, $\mathbb{P}$-a.s.
\begin{multline}
<F_\varepsilon(t), \phi> = <F_0, \phi> + \int_0^t <F_\varepsilon(s), a.\nabla \phi> ds + \frac{1}{\varepsilon} \int_0^t <F_\varepsilon(s), \Lambda(x)\partial_\xi \phi > ds \\
+ \frac{1}{\varepsilon} \int_0^t  <\mathbf{1}_{u_\varepsilon(s) > \xi} - F_\varepsilon(s), \phi(s)> ds + \sum_{k=1}^d \int_0^t <F_\varepsilon(s),\partial_\xi(g_k \phi)> d \beta_k(s)\\
+ \frac{1}{2} \int_0^t <F_\varepsilon(s),\partial_\xi(G^2 \partial_\xi \phi)> ds.
\end{multline}
\end{definition}

\noindent Let us state here several results. For  $\Theta$  a smooth function with compact support satisfying $0 \leq \Theta \leq 1$ and 
\begin{equation}
  \Theta(\xi) := \left\{ 
\begin{array}{lcr}
1 ~$if$~ |\xi| \leq 1/2,  \\ 
0 ~$if$~ |\xi| \geq 1,
\end{array}
\right. 
\end{equation}
we denote $\Theta_R(\xi):= \Theta(\frac{\xi}{R})$, $g_k^R(x,\xi) := g_k(x,\xi) \Theta_R(\xi)$ for $k=1, \dots, d$ and $a^R(x,\xi):= a(x, \xi)  \Theta_R(\xi)$. The coefficients $\Phi^R$ and $G^{R,2}$ are defined similarly as $\Phi$ and $G^{2}$ replacing $g_k$ by $g_k^R$. We introduce the intermediate problem,
\begin{equation}
\label{aux}
  \left\{ 
\begin{array}{lcr}
 \dsp{dX^\varepsilon + div_x ( a(x,\xi)X^\varepsilon) dt + \frac{\Lambda(x)}{\varepsilon} \partial_\xi X^\varepsilon dt = - \partial_\xi X^\varepsilon \Phi dW + \frac{1}{2} \partial_\xi (G^{2} \partial_\xi X^{\varepsilon}) dt,}  \\ 
X^\varepsilon(s)= X_0,
\end{array}
\right. 
\end{equation}
and the truncated problem associated
\begin{equation}
\label{trunc}
  \left\{ 
\begin{array}{lcr}
 \dsp{dX^\varepsilon + div_x ( a^R(x,\xi)X^\varepsilon) dt + \frac{\Lambda(x)}{\varepsilon} \partial_\xi X^\varepsilon dt = - \partial_\xi X^\varepsilon \Phi^R dW + \frac{1}{2} \partial_\xi (G^{R,2} \partial_\xi X^{\varepsilon}) dt,}  \\ 
X^\varepsilon(s)= X_0.
\end{array}
\right. 
\end{equation}

\begin{proposition}
\label{existencenontrunc}
If  $X_0 $ is a $\mathcal{F}_s \otimes \mathcal{B}(\T^N) \otimes  \mathcal{B}(\R)$- measurable initial data belonging to $L^\infty(\Omega \times \T^N \times \R)$, then there exists a weak solution $X^\varepsilon \in L_{\mathcal{P}_s}^\infty(\Omega \times [s,T] \times \T^N \times \R)$ to \eqref{aux}. Moreover, it is represented by 
\begin{equation}
\mathcal{S}^\varepsilon(t,s)(\omega,x,\xi) := \lim_{R \to + \infty} [\mathcal{S}^{\varepsilon,R}(t,s) X_0](\omega,x,\xi), ~~~~~ 0 \leq s\leq t \leq T,
\end{equation}
with $\mathcal{S}_{\varepsilon,R}$ a solution operator of \eqref{trunc}. 
\end{proposition}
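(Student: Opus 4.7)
The plan is to build the solution $\mathcal{S}^\varepsilon(t,s) X_0$ pathwise as an $R \to +\infty$ limit of solutions to the truncated problem \eqref{trunc}, exploiting the fact that \eqref{aux} has no source term and that, once the It\^o stochastic part is rewritten in Stratonovich form, it becomes a linear stochastic transport equation.

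First, fix $R>0$. Since $a^R, g_k^R, \Lambda$ and their derivatives are bounded on $\T^N \times \R$, the operator $\frac{1}{2}\partial_\xi(G^{R,2}\partial_\xi \cdot)$ is exactly the It\^o-to-Stratonovich correction of the transport $-\partial_\xi X^\varepsilon \Phi^R \circ dW$, so \eqref{trunc} can be recast as
\begin{equation*}
dX^\varepsilon + \mathrm{div}_x(a^R(x,\xi) X^\varepsilon) dt + \frac{\Lambda(x)}{\varepsilon} \partial_\xi X^\varepsilon dt = -\partial_\xi X^\varepsilon \Phi^R \circ dW.
\end{equation*}
This falls within Kunita's theory of stochastic flows: the associated characteristic SDE on $\T^N \times \R$ has bounded smooth coefficients (after truncation) and hence generates a stochastic flow of $\mathcal{C}^1$-diffeomorphisms $(\phi^{\varepsilon,R}_{s,t})$. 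The unique weak solution $X^{\varepsilon,R}$ of \eqref{trunc} is obtained by pulling back $X_0$ by $(\phi^{\varepsilon,R}_{s,t})^{-1}$ and multiplying by a Jacobian factor accounting for the conservative form $\mathrm{div}_x(a^R X)$; this defines $\mathcal{S}^{\varepsilon,R}(t,s) X_0$, and uniqueness in $L^\infty_{\mathcal{P}_s}$ is classical. Assumption \eqref{positivitediv} yields $\mathrm{div}_x a^R \geq 0$, so the Jacobian factor is bounded by $1$ and one obtains the uniform bound
\begin{equation*}
\|\mathcal{S}^{\varepsilon,R}(t,s) X_0\|_{L^\infty(\Omega \times \T^N \times \R)} \leq \|X_0\|_{L^\infty(\Omega \times \T^N \times \R)}.
\end{equation*}

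Second, I would pass to the limit pathwise. For fixed $(\omega,x,\xi)$, the $\xi$-component of the characteristic starting at $(x,\xi)$ at time $s$ solves an SDE with bounded drift $\Lambda(x)/\varepsilon$ and diffusion $-g_k^R(x,\cdot)$. The linear growth control \eqref{inegG}, combined with a Gronwall-type moment estimate, ensures non-explosion of the non-truncated SDE, so that the stopping times $\tau_R := \inf\{t \geq s : |\xi_t^{\varepsilon,R}| \geq R\}$ tend to $+\infty$ $\mathbb{P}$-a.s. For such $R$ the truncated and non-truncated characteristics agree on $[s,t]$, which yields pointwise convergence of $\mathcal{S}^{\varepsilon,R}(t,s) X_0$ as $R \to +\infty$. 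Combined with the uniform $L^\infty$ bound and dominated convergence, this gives convergence in every $L^q$, $q<\infty$. Passing to the limit in the weak formulation against a test function $\phi \in \mathcal{C}_c^\infty(\T^N \times \R)$ is then routine: on the support of $\phi$, for $R$ large enough one has $a^R = a$, $g_k^R = g_k$ and $G^{R,2}=G^2$, so the deterministic terms converge by dominated convergence and the stochastic term by It\^o isometry, producing a weak solution of \eqref{aux}.

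The hard part is the control of the $\xi$-characteristic without truncation: since $g_k(x,\xi)$ has only linear growth in $\xi$, non-explosion must be extracted from a careful moment estimate on $\mathbb{E}|\xi_t|^2$ rather than from any a priori pointwise bound, and this estimate has to be uniform enough in $(x,\omega)$ to give the pathwise identification of the flows for $R$ large. In addition, one must verify that the pointwise pathwise limit admits a $\mathcal{P}_s \otimes \mathcal{B}(\T^N) \otimes \mathcal{B}(\R)$-measurable version on $\Omega \times [s,T] \times \T^N \times \R$; this is delivered by the Kunita flow construction but requires picking a jointly measurable representative before taking the limit.
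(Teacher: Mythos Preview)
Your overall strategy---Stratonovich reformulation, Kunita flows for the truncated problem, stopping-time consistency, moment-based non-explosion, then dominated convergence in the weak formulation---is exactly the paper's approach. Two concrete points need repair.

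First, the It\^o-to-Stratonovich conversion is not what you claim. The correction for $-\partial_\xi X\,\Phi^R\, dW$ is $\frac{1}{2}\sum_k g_k^R\,\partial_\xi(g_k^R\,\partial_\xi X) = \frac{1}{4}\,\partial_\xi G^{R,2}\,\partial_\xi X + \frac{1}{2}G^{R,2}\,\partial_\xi^2 X$, which differs from $\frac{1}{2}\,\partial_\xi(G^{R,2}\,\partial_\xi X)$ by $\frac{1}{4}\,\partial_\xi G^{R,2}\,\partial_\xi X$. Hence the Stratonovich form of \eqref{trunc} retains a first-order drift term $\frac{1}{4}\,\partial_\xi X^\varepsilon\,\partial_\xi G^{R,2}\,dt$ on the right-hand side (this is the paper's equation for the Stratonovich truncated problem). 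The method still works---this extra bounded drift simply contributes $-\frac{1}{4}\,\partial_\xi G^{R,2}$ to the $\xi$-characteristic SDE---but your displayed equation is wrong and the characteristic system you implicitly use is incomplete.

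Second, your stopping time $\tau_R = \inf\{t\ge s:|\xi_t^{\varepsilon,R}|\ge R\}$ has the wrong threshold. Since $\Theta_R$ equals $1$ only on $|\xi|\le R/2$, the truncated coefficients $a^R,g_k^R$ coincide with $a,g_k$ only there; on $\{R/2<|\xi|<R\}$ they do not agree. With your threshold you cannot conclude that the truncated and non-truncated characteristics coincide on $[s,\tau_R]$, so the monotonicity of $(\tau_R)$ and the identification $\mathcal{S}^{\varepsilon,R+1}=\mathcal{S}^{\varepsilon,R}$ on $[s,\tau_R]$ both fail. The paper takes $\tau^R=\inf\{t\ge s:|\varphi_{s,t}^{\varepsilon,R,0}|>R/2\}$ precisely for this reason; with that choice your moment/Markov-inequality argument giving $\mathbb{P}(\tau_\infty=T)=1$ goes through unchanged.
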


\noindent From above, we obtain the following proposition:
\begin{proposition}
For any $\varepsilon>0$, there exists a weak solution of the stochastic BGK model with high field scaling \eqref{highfieldBGK} denoted by $F_\varepsilon$. Moreover, $F_\varepsilon$ is represented by 
\begin{equation}
\label{RepresentationFeps}
F_\varepsilon(t) = e^{-t/\varepsilon} \mathcal{S}^\varepsilon(t,0) \mathbf{1}_{u_0>\xi} + \frac{1}{\varepsilon} \int_0^t e^{-\frac{t-s}{\varepsilon}} \mathcal{S}^\varepsilon(t,s) \mathbf{1}_{u_\varepsilon(s)>\xi} ds.
\end{equation}
\end{proposition}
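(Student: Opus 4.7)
The strategy is a Duhamel-based Picard fixed-point argument. Rewriting \eqref{BGKstoc} in the form
$$dF^\varepsilon + \frac{F^\varepsilon}{\varepsilon}\,dt = \mathcal{A}^\varepsilon(F^\varepsilon) + \frac{\mathbf{1}_{u_\varepsilon>\xi}}{\varepsilon}\,dt,$$
where $\mathcal{A}^\varepsilon$ collects the transport, high-field drift, $\xi$-diffusion, and stochastic terms of \eqref{aux}, one would apply the Itô product rule to $e^{t/\varepsilon}F^\varepsilon(t)$ and use the propagator $\mathcal{S}^\varepsilon$ of Proposition~\ref{existencenontrunc} to obtain the variation-of-constants identity \eqref{RepresentationFeps}. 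Because $u_\varepsilon$ is itself an integral of $F^\varepsilon$, this identity is a nonlinear self-consistency equation; my plan is to use it to \emph{define} $F_\varepsilon$ rather than as a consequence, via a fixed point on the macroscopic density $u$.

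To each predictable $u:\Omega\times[0,T]\times\T^N\to\R$, associate $F[u]$ defined by the right-hand side of \eqref{RepresentationFeps} with $u_\varepsilon$ replaced by $u$, and define the Picard map $\Psi(u)(t,x) := \int_\R (F[u](t,x,\xi) + \mathbf{1}_{0>\xi})\,d\xi$. The central ingredient is the $L^1$-isometry
$$\int_\R \bigl|\mathbf{1}_{u_1>\xi} - \mathbf{1}_{u_2>\xi}\bigr|\,d\xi = |u_1 - u_2|,$$
which restores Lipschitz behaviour despite the pointwise discontinuity of the indicator. Combined with the $L^1(\T^N\times\R)$-nonexpansiveness in expectation of $\mathcal{S}^\varepsilon$ --- which follows from the Stratonovich nature of the combination $-\partial_\xi X^\varepsilon \Phi\,dW + \tfrac{1}{2}\partial_\xi(G^2\partial_\xi X^\varepsilon)\,dt$ and from assumption \eqref{positivitediv} --- this yields
$$\E\|\Psi(u_1)(t)-\Psi(u_2)(t)\|_{L^1(\T^N)} \leq \frac{1}{\varepsilon}\int_0^t e^{-(t-s)/\varepsilon}\,\E\|u_1(s)-u_2(s)\|_{L^1(\T^N)}\,ds.$$
In the weighted norm $\|u\|_\alpha := \sup_{t\in[0,T]} e^{-\alpha t}\E\|u(t)\|_{L^1(\T^N)}$ with any $\alpha>0$, this reads $\|\Psi(u_1)-\Psi(u_2)\|_\alpha \leq (1+\alpha\varepsilon)^{-1}\|u_1-u_2\|_\alpha$, so $\Psi$ is a strict contraction and admits a unique fixed point $u_\varepsilon$. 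Setting $F_\varepsilon := F[u_\varepsilon]$ gives the candidate solution, with predictability and the $L^\infty$ bound inherited from the propagator and the indicator source (the latter taking values in $[0,1]$).

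Finally, one checks that $F_\varepsilon$ satisfies the weak formulation. Testing \eqref{RepresentationFeps} against $\phi\in\mathcal{C}_c^\infty(\T^N\times\R)$, using the weak formulation of \eqref{aux} for each frozen source $\mathbf{1}_{u_\varepsilon(s)>\xi}$, and interchanging the Duhamel integral with the stochastic integrals via a stochastic Fubini theorem, the $t$-differentiation of the exponential factor $e^{-(t-s)/\varepsilon}$ reconstitutes the term $-F^\varepsilon/\varepsilon\,dt$ of the relaxation while the boundary contribution at $s=t$ supplies $\mathbf{1}_{u_\varepsilon>\xi}/\varepsilon\,dt$. The main obstacle of the whole argument is the justification of the $L^1_{x,\xi}$-nonexpansiveness of $\mathcal{S}^\varepsilon$ in expectation: because of the singular drift $\Lambda/\varepsilon$, the $\xi$-direction diffusion $G^2/2$, and the noise $\partial_\xi X^\varepsilon \Phi\,dW$, one cannot quote a generic Feller or transport argument, and must instead perform the $\mathrm{sign}$-type computation on the truncated problem \eqref{trunc} where all velocity coefficients are compactly supported, then pass to the limit $R\to+\infty$ using the uniform tail controls \eqref{hypo_a} and \eqref{hypo_g} together with the representation furnished by Proposition~\ref{existencenontrunc}.
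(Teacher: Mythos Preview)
Your approach is essentially the paper's: Duhamel representation plus a Banach fixed point, driven by the identity $\int_\R|\mathbf{1}_{u_1>\xi}-\mathbf{1}_{u_2>\xi}|\,d\xi=|u_1-u_2|$ and the $L^1$-nonexpansiveness of $\mathcal{S}^\varepsilon$, the latter obtained (as you correctly anticipate) via the truncated problem \eqref{trunc} and passage $R\to\infty$.

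Two minor points of divergence are worth recording. First, the paper sets up the fixed point at the kinetic level rather than on $u$, and --- because $\mathbf{1}_{u_0>\xi}$ is not $\xi$-integrable --- works with the shifted unknown $h_\varepsilon(t):=F_\varepsilon(t)-\mathcal{S}^\varepsilon(t,0)\mathbf{1}_{0>\xi}$; subtracting the \emph{propagated} indicator (rather than $\mathbf{1}_{0>\xi}$ itself) guarantees that every object in the iteration lives in $L^1(\Omega\times\T^N\times\R)$, since $\chi_{u_0}=\mathbf{1}_{u_0>\xi}-\mathbf{1}_{0>\xi}\in L^1_\xi$ and $\mathcal{S}^\varepsilon$ preserves $L^1$. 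Your map $\Psi(u)=\int_\R(F[u]+\mathbf{1}_{0>\xi})\,d\xi$ has a sign slip (it should be $-\mathbf{1}_{0>\xi}$) and implicitly needs $\mathcal{S}^\varepsilon(t,s)\mathbf{1}_{u(s)>\xi}-\mathbf{1}_{0>\xi}\in L^1_\xi$, which does not follow from the contraction property alone; the paper's shift sidesteps this cleanly. Second, the paper does not introduce a weighted norm but obtains the contraction directly on $L^\infty(0,T;L^1)$ (details deferred to \cite{Hof}); your weighted-norm variant is an equivalent device.
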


\noindent The proof of those two results is quite similar to the ones in \cite{Hof} and mainly relies on a use of the stochastic characteristics method developed by Kunita in \cite{Kun}. Nevertheless, there is some additional difficulties due to the presence of the term $\frac{\Lambda(x)}{\varepsilon} \partial_\xi F_\varepsilon$ (which introduce a dependence on $\varepsilon$ of the solution operator) and the dependence of $a$ in $x$. Then, for the sake of completness, we give a quite detailed sketch of the proof of the existence in Appendix \ref{wellposed}.

\subsection{Convergence of the stochastic kinetic model}

\noindent From now on, let us denote by $C$ a constant which does not depend on any parameter and may change from a line to another. In this section, our purpose is to study the limit of the stochastic kinetic model \eqref{highfieldBGK} as $\varepsilon$ goes to $0$ in the following weak formulation satisfied by $F_\varepsilon$: 

\begin{multline}
\label{formulationfaibleHFSBGK}
\int_0^T < F_\varepsilon(t), \partial_t \varphi(t)> dt + <F_0, \varphi(0)> + \int_0^T <F_\varepsilon(t), a. \nabla \varphi(t)> dt \\
= - \frac{1}{\varepsilon} \int_0^T <\mathbf{1}_{u_\varepsilon>\xi} - (F_\varepsilon(t)+ \Lambda(x) \partial_\xi F_\varepsilon(t)), \varphi(t)> dt  + \int_0^T < \partial_\xi F_\varepsilon(t) \Phi dW_t, \varphi(t)> \\
+ \frac{1}{2} \int_0^T <G^2 \partial_\xi F_\varepsilon(t), \partial_\xi \varphi(t)>dt.
\end{multline}
 for $\varphi \in \mathcal{C}_c^\infty([0,T) \times \T^N \times \R)$.
 
\begin{proposition}
\label{convergenceFeps}
Up to a subsequence, $(F_\varepsilon)_{\varepsilon >0}$ converges weak-$*$ to $F$ with $F$ satisfying the following:  for any test function $\varphi \in \mathcal{C}_c([0,T) \times \T^N \times \R)$, 
\begin{multline}
\label{formulationcinetique}
\int_0^T < F(t), \partial_t \varphi(t)> dt + <F_0, \varphi(0)> + \int_0^T <F(t), a. \nabla \varphi(t)> dt \\
= m( \partial_\xi \varphi) + \int_0^T < \partial_\xi F(t) \Phi dW_t, \varphi(t)> + \frac{1}{2} \int_0^T <G^2 \partial_\xi F(t), \partial_\xi \varphi(t)>dt
\end{multline}
with $m$ a random nonnegative bounded Borel measure on $[0,T] \times \T^N \times \R$ and where $m(\partial_\xi \varphi)$ denotes 
\begin{equation}
m( \partial_\xi \varphi) := \int_{\T^N \times [0,T] \times \R} \partial_\xi \varphi dm(x,t,\xi).
\end{equation}
\end{proposition}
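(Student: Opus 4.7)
The plan is to extract weakly-$*$ convergent subsequences from $(F_\varepsilon)$, identify the singular relaxation term in \eqref{formulationfaibleHFSBGK} as a nonnegative Borel measure $m$ applied to $\partial_\xi \varphi$, and pass to the limit term by term. Compactness of $(F_\varepsilon)$ in $L^\infty_{\mathcal{P}}(\Omega \times [0,T] \times \T^N \times \R)$ comes from Proposition \ref{existencenontrunc} together with the representation formula \eqref{RepresentationFeps} and the bound $\|\mathbf{1}_{u_\varepsilon > \xi}\|_{L^\infty} \leq 1$, which give $\|F_\varepsilon\|_{L^\infty_{\mathcal{P}}} \leq C$ uniformly in $\varepsilon$. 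Banach-Alaoglu then produces a subsequence with $F_\varepsilon \rightharpoonup^* F$, and a further extraction handles $\mathbf{1}_{u_\varepsilon > \xi}$; assumption \eqref{hypo_feps} supplies the first-$\xi$-moment control needed to handle tails at $\xi = \pm \infty$.

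To identify the limit of the singular term in \eqref{formulationfaibleHFSBGK}, I would introduce the $\xi$-antiderivative
\begin{equation*}
m_\varepsilon(\omega, t, x, \xi) := \int_\xi^{+\infty} \frac{1}{\varepsilon}\bigl((F_\varepsilon + \Lambda(x)\, \partial_{\xi'} F_\varepsilon) - \mathbf{1}_{u_\varepsilon > \xi'}\bigr)(\omega, t, x, \xi')\, d\xi',
\end{equation*}
so that an integration by parts in $\xi$ rewrites that term exactly as $\langle m_\varepsilon, \partial_\xi \varphi\rangle$ for $\varphi$ compactly supported in $\xi$. The crucial step is to show that $\p$-a.s.\ $m_\varepsilon \geq 0$ and that $\sup_\varepsilon \E\|m_\varepsilon\|_{TV([0,T]\times \T^N \times \R)} < +\infty$. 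Nonnegativity should be read off from \eqref{RepresentationFeps} using $\Lambda \leq 0$ (assumption \eqref{negativitelambda}) and $0 \leq \mathbf{1}_{u_0 > \xi}, \mathbf{1}_{u_\varepsilon > \xi} \leq 1$, while the uniform total-mass bound should follow from testing \eqref{formulationfaibleHFSBGK} against a well-chosen $\xi$-primitive function and absorbing the remaining contributions via \eqref{positivitediv}, \eqref{hypo_a}, \eqref{hypo_g} and \eqref{hypo_feps}. Banach-Alaoglu on bounded random measures then yields $m_\varepsilon \rightharpoonup^* m$ with $m$ a random nonnegative bounded Borel measure.

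Finally, one passes to the limit term by term in \eqref{formulationfaibleHFSBGK}: $\langle F_\varepsilon, \partial_t \varphi\rangle$, $\langle F_\varepsilon, a \cdot \nabla_x \varphi\rangle$ and $\frac{1}{2}\langle G^2 \partial_\xi F_\varepsilon, \partial_\xi \varphi\rangle$ (after one integration by parts in $\xi$) pass by the weak-$*$ convergence of $F_\varepsilon$, while the stochastic integral is rewritten after integration by parts as $-\sum_k \int_0^T \langle F_\varepsilon, \partial_\xi(g_k \varphi)\rangle d\beta_k(t)$ and handled via It\^o isometry combined with weak-$*$ convergence, exactly as in Debussche-Vovelle \cite{DebVov} and Hofmanov\'a \cite{Hof}. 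The main obstacle is the second step: establishing simultaneously the sign and the total-mass control of $m_\varepsilon$. Compared with the standard BGK setting of \cite{Hof}, the novelty here is the high-field contribution $\Lambda \partial_\xi F_\varepsilon$, which enters non-trivially in the antiderivative defining $m_\varepsilon$ and whose treatment relies crucially on the interplay between \eqref{negativitelambda}, \eqref{hypo_feps} and the explicit structure of the representation formula \eqref{RepresentationFeps}.
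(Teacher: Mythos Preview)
Your proposal is correct and follows essentially the same approach as the paper: the paper defines $m_\varepsilon(t,x,\xi) = \frac{1}{\varepsilon}\int_{-\infty}^\xi(\mathbf{1}_{u_\varepsilon > \zeta} - F_\varepsilon)\,d\zeta + (-\Lambda(x))F_\varepsilon$, which agrees with your antiderivative after integrating the $\Lambda\,\partial_{\xi'}F_\varepsilon$ piece by parts, and the uniform total-mass bound is obtained exactly by testing the equation for $f_\varepsilon$ against the $\xi$-primitive $H(t,x,\xi)=\int_{-\infty}^\xi \varphi\,d\zeta$ and invoking \eqref{hypo_a}, \eqref{hypo_g}, \eqref{hypo_feps} (together with a stochastic Fubini step to kill the martingale term), as you outline. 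The only cosmetic differences are the direction of the $\xi$-integration in the definition of $m_\varepsilon$ and that the paper invokes dominated convergence for stochastic integrals rather than the It\^o isometry when passing to the limit in the noise term.
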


\begin{proof}
From the representation formula of $F_\varepsilon$ \eqref{RepresentationFeps}, we deduce that the set of solutions $\{F_\varepsilon; \varepsilon \in (0,1)\}$ is bounded in $L_{\mathcal{P}}^\infty(\Omega \times [0,T] \times \T^N \times \R)$. Using the Banach-Alaoglu theorem, we know that there exists $F$ in $L_{\mathcal{P}}^\infty(\Omega \times [0,T] \times \T^N \times \R)$ such that, up to subsequences, as $\varepsilon$ goes to $0$,
\begin{equation*}
F_\varepsilon  \mathop{\rightharpoonup}\limits^{w-*} F~ \text{in} ~  L_{\mathcal{P}}^\infty(\Omega \times [0,T] \times \T^N \times \R).
\end{equation*}
Then, almost surely, we have as $\varepsilon$ goes to $0$,
\begin{equation*}
\int_0^T <F_\varepsilon(t), \partial_t \varphi(t)> dt \longrightarrow \int_0^T <F(t), \partial_t \varphi(t)> dt,
\end{equation*}
\begin{equation*}
\int_0^T <F_\varepsilon(t), a.\nabla \varphi(t)> dt \longrightarrow \int_0^T <F(t), a.\nabla \varphi(t)> dt,
\end{equation*}
\begin{equation*}
\frac{1}{2} \int_0^T <G^2 \partial_\xi F_\varepsilon(t), \partial_\xi \varphi(t)> dt \longrightarrow \frac{1}{2} \int_0^T <G^2 \partial_\xi F(t), \partial_\xi \varphi(t)> dt.
\end{equation*}
Moreover, regarding the stochastic term, a use of dominated convergence theorem for stochastic integrals allows us to conclude that almost surely, 
\begin{equation*}
 \int_0^T < \partial_\xi F_\varepsilon(t) \Phi dW_t, \varphi(t)>  \to \int_0^T   < \partial_\xi F(t) \Phi dW_t, \varphi(t)> .
\end{equation*}
Indeed, $<F_\varepsilon, \partial_\xi(g_k \varphi)> \longrightarrow <F, \partial_\xi(g_k \varphi)>$ a.e. $(\omega,t) \in \Omega \times [0,T]$. In addition, by assumption on $g_k$ and since $F_\varepsilon$ is bounded,
\begin{equation*}
|<F_\varepsilon, \partial_\xi(g_k \varphi)>| \leq C.
\end{equation*}
It remains to deal with the first term of the right-hand side of \eqref{formulationfaibleHFSBGK}. Let us define 
\begin{equation}
\label{mesure}
m_\varepsilon(t,x,\xi) := \frac{1}{\varepsilon} \int_{-\infty}^\xi (\mathbf{1}_{u_\varepsilon(t,x)>\zeta}(\zeta)- F_\varepsilon(t,x,\zeta)) d\zeta + (-\Lambda(x) F_\varepsilon(t,x,\xi)).
\end{equation}

\noindent We can prove similarly as in \cite{Hof}
that the first term of the right-hand side of  \eqref{mesure} is a random nonnegative measure over $[0,T] \times T^N \times \R$ and since by assumption \eqref{negativitelambda}, $\Lambda(x) \leq 0$ for all $x$, so does the second term. Indeed, we recall that a positive distribution is a positive Radon measure. \\

\noindent  Moreover, $f_\varepsilon$ satisfies
\begin{multline}
df^\varepsilon + div_x ( a(x,\xi)f^\varepsilon) dt + div_x ( a(x,\xi) \mathbf{1}_{0>\xi}) dt  \\
= \partial_\xi m_\varepsilon - \partial_\xi f^\varepsilon \Phi dW_t  - \partial_\xi \mathbf{1}_{0>\xi} \Phi dW_t + \frac{1}{2} \partial_\xi (G^2 \partial_\xi F^{\varepsilon}) dt.
\end{multline}
since $f_\varepsilon = F_\varepsilon - \mathbf{1}_{0>\xi}$.\\

\noindent We consider the following test functions 
\begin{equation*}
\varphi(t,x,\xi) = \Psi(t,x) \tilde{\varphi}(\xi)
\end{equation*}
with $\Psi \in \mathcal{C}_c([0,T) \times \T^N)$, $\tilde{\varphi} \in \mathcal{C}_c(\R)$. We denote 
\begin{equation*}
H(t,x,\xi) := \int_{-\infty}^\xi \varphi(t,x,\zeta) d\zeta.
\end{equation*}
Thus, using the fact that $\partial_\xi \mathbf{1}_{0>\xi} = - \delta_0$ and assumption \eqref{gk0}, we have 
\begin{equation}
\begin{array}{rcl}
\dsp{\mathbb{E} \int_0^T <m_\varepsilon, \varphi> dt} & = & \dsp{- \mathbb{E} \int_0^T <\partial_\xi m_\varepsilon, H(t)> dt}\\
& = &  \dsp{\mathbb{E} \int_0^T <f_\varepsilon(t), \partial_t H(t)> dt +  \mathbb{E} <f_0, \varphi(0)> }\\
& ~& + ~\dsp{\mathbb{E} \int_0^T <f_\varepsilon(t), a \cdot \nabla H(t)> dt + \mathbb{E} \int_0^T <\mathbf{1}_{0>\xi}, a \cdot \nabla H(t)> dt }\\
&~ & - ~\dsp{\mathbb{E} \int_0^T <\partial_\xi f_\varepsilon(t) \Phi dW_t,  H(t)>  - \frac{1}{2} \mathbb{E} \int_0^T <G^2 \partial_\xi F_\varepsilon(t),  \partial_\xi  H(t)> dt.}
\end{array}
\end{equation}
 At this point, our aim is to bound $\mathbb{E} \int_0^T <m_\varepsilon, \varphi> dt$ for all $\varphi$ in $\mathcal{C}_c([0,T) \times \T^N \times \R)$ independently of $\varepsilon$ in order to conclude thanks to properties of Radon measure on locally compact spaces. Let us first prove that the term $\mathbb{E} \int_0^T <\partial_\xi f_\varepsilon \Phi dW_t,  H(t)>$ disappears. We recall that a martingale has zero expected value. Therefore, the idea is to use the stochastic Fubini theorem (see \cite[Theorem 4.18]{Daprato}) to interchange the integrals with respect to $x, \xi$ and the stochastic one. In order to apply this theorem, we must prove that 
 \begin{equation*}
 \int_{\T^N} \int_\R \pare{\mathbb{E}  \int_0^T \left|   f_\varepsilon(t) \partial_\xi (g_k H(t))\right|^2 dt }^{1/2} d\xi dx < + \infty 
 \end{equation*}
 for each $k=1,2, \dots, d$. Indeed, we have
 
 \begin{equation}
 \begin{array}{l}
 \dsp{\int_{\T^N} \int_\R \pare{\mathbb{E}  \int_0^T \left|   f_\varepsilon(t) \partial_\xi (g_k H(t)) \right|^2 dt }^{1/2} d\xi dx}\\
  \dsp{= \int_{\T^N} \int_\R \pare{\mathbb{E}  \int_0^T \left|   f_\varepsilon(t) \partial_\xi g_k H(t) + f_\varepsilon(t) g_k \varphi \right|^2 dt }^{1/2} d\xi dx}\\
  \leq \dsp{ \int_{\T^N} \int_\R \croch{\pare{\mathbb{E}  \int_0^T \left|   f_\varepsilon(t) \partial_\xi g_k H(t)\right|^2 dt }^{1/2} + \pare{\mathbb{E}  \int_0^T \left|   f_\varepsilon(t) g_k \varphi\right|^2 dt }^{1/2}} d\xi dx}\\
  \leq C \dsp{\pare{\int_{\T^N} \int_\R  |\partial_\xi g_k \xi| d\xi dx + \int_{\T^N} \int_\R  (1+ |\xi|^2)^{1/2} \tilde{\varphi} d\xi dx}}\\
  \dsp{< + \infty}
 \end{array}
 \end{equation}
using the fact that $f_\varepsilon$ is bounded and assumption \eqref{hypo_g}. Then, we can interchange the integrals. It remains to prove that $\int_0^T   \partial_\xi f_\varepsilon g_k H(t)  dW_t$ is a well defined martingale. For each $k=1, \dots, d$, we have
\begin{multline}
 \mathbb{E} \int_0^T \left|   f_\varepsilon(t) \partial_\xi(g_k H(t))\right|^2 dt\\
 \leq C  \pare{\mathbb{E} \int_0^T \left|  f_\varepsilon(t) \partial_\xi  g_k H(t)\right|^2 dt + \mathbb{E} \int_0^T \left|  f_\varepsilon(t)  g_k \varphi \right|^2 dt}\\
 < + \infty
\end{multline}
by the same arguments as previously. So, finally we have
\begin{equation}
\begin{array}{rcl}
\dsp{\left| \mathbb{E} \int_0^T <m_\varepsilon, \varphi> dt\right|} & \leq &  \dsp{\mathbb{E} \int_0^T <f_\varepsilon(t), |\partial_t \Psi(t) \xi|> dt +  \left|\mathbb{E} <f_0, \varphi(0)> \right| }\\
& ~& + ~\dsp{\mathbb{E} \int_0^T <f_\varepsilon(t), |a. \nabla \Psi(t) \xi|> dt +  \mathbb{E} \int_0^T <\mathbf{1}_{0>\xi}, |a. \nabla \Psi(t) \xi|> dt  }\\
&~ & + ~\dsp{ \frac{1}{2} \left| \mathbb{E} \int_0^T <G^2 \partial_\xi F_\varepsilon(t),  \varphi(t)> dt \right|.}
\end{array}
\end{equation}
Then by assumptions \eqref{hypo_a} and \eqref{hypo_g}, we deduce that $\dsp{\mathbb{E} \int_0^T <m_\varepsilon, \varphi> dt}$ is bounded  independently of $\varepsilon$. By density, the results hold true for all $\varphi \in \mathcal{C}_c^\infty([0,T) \times \T^N \times \R)$ and similarly for all $\varphi \in \mathcal{C}_c([0,T) \times \T^N \times \R)$. Then, by properties of Radon measures on locally compact spaces, we can  conclude that for almost every $\omega \in \Omega$, there exists a nonnegative measure $m(\omega)$ such that, up to a subsequence, almost surely, 
\begin{equation}
\int_0^T <m_\varepsilon, \varphi(t)> dt \longrightarrow \int_0^T <m, \varphi(t)> dt
\end{equation}
for any test function $\varphi \in \mathcal{C}_c^\infty([0,T) \times \T^N \times \R)$. \\

\noindent Finally, passing to the limit in \eqref{formulationfaibleHFSBGK} leads to the statement. 
\end{proof}

\begin{remark}
The purpose of assumptions \eqref{hypo_a} and \eqref{hypo_g} can actually be found in the above proof to bound $\mathbb{E} \int_0^T <m_\varepsilon, \varphi> dt$. Indeed, due to the dependence on $\varepsilon$ of the stochastic characteristic system because of the presence of the term $\frac{\Lambda(x)}{\varepsilon} \partial_\xi F_\varepsilon$, the techniques developed in \cite{Hof} can not be adopted here. 
\end{remark}

\subsection{A kinetic formulation}

As mentioned previously, a kinetic formulation is a kinetic equation at the macroscopic level satisfy by a density-like function. At this point, we already have obtained \eqref{formulationcinetique}, the kinetic equation satisfy by $F$ at the macroscopic level. Then, it remains to study the behavior of its velocity distribution. \\

\noindent In order to do so, let us recall here some notions that will be needed to get the kinetic formulation associated to \eqref{stochasticlaw}.

\begin{definition}[Young measure]
\label{youngmeasure}
Let $(X,\lambda)$ be a finite measure space. Let $\mathcal{P}_1(\R)$ denote the set of probability measures on $\R$. We say that a map $\nu: X \to \mathcal{P}_1(\R)$ is a Young measure on $X$ if, for all $\phi \in \mathcal{C}_b(\R)$, the map $z \mapsto \nu_z(\phi)$ from $X$ to $\R$ is measurable.
\end{definition}
\noindent We have the following compactness result:

\begin{theorem}[Compactness of Young measure]
\label{compactnessyoungmeasure}
Let $(X, \lambda)$ be a finite measure space such that $L^1(X)$ is separable. Let $(\nu^n)$ be a sequence of Young measures on $X$ satisfying for some $p \geq 1$, 
\begin{equation*}
\sup_n \int_X \int_\R |\xi|^p d\nu_z^n(\xi)d\lambda(z) < + \infty.
\end{equation*}
Then, there exists a Young measure $\nu$ on $X$ and a subsequence still denoted $(\nu^n)$ such that, for all $h \in L^1(X)$, for all $\phi \in \mathcal{C}_b(\R)$,
\begin{equation*}
\lim_{n \to + \infty} \int_X h(z) \int_\R \phi(\xi) d\nu_z^n(\xi)d\lambda(z) =   \int_X h(z) \int_\R \phi(\xi) d\nu_z(\xi)d\lambda(z).
\end{equation*}
\end{theorem}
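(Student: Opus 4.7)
The plan is to realize each Young measure $\nu^n$ as a bounded linear functional on a separable Banach space, extract a weak-$*$ cluster point via the sequential Banach--Alaoglu theorem, and then use the uniform $p$-th moment bound both to identify the limit as a Young measure and to upgrade convergence from $\mathcal{C}_0(\R)$ to $\mathcal{C}_b(\R)$. I would work in the Banach space $E := L^1(X; \mathcal{C}_0(\R))$ equipped with $\|\psi\|_E = \int_X \|\psi(z,\cdot)\|_\infty \, d\lambda(z)$. Since $L^1(X)$ and $\mathcal{C}_0(\R)$ are both separable, so is $E$. Each $\nu^n$ induces a bounded linear functional
\begin{equation*}
T_n(\psi) := \int_X \int_\R \psi(z,\xi) \, d\nu_z^n(\xi) \, d\lambda(z),
\end{equation*}
with $\|T_n\|_{E^*} \leq \lambda(X)$ since $\nu_z^n$ is a probability measure. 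The sequential Banach--Alaoglu theorem furnishes a subsequence (still denoted $T_n$) that converges weak-$*$ to some $T \in E^*$, and the classical identification of $(L^1(X;\mathcal{C}_0(\R)))^*$ with weak-$*$ measurable essentially bounded maps $z \mapsto \nu_z$ into the space $M(\R)$ of finite Radon measures represents $T$ by a candidate map $z \mapsto \nu_z$.

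Next I would verify that $\nu_z$ is a.e. a probability measure. Nonnegativity is immediate by testing $T_n \to T$ on $\psi(z,\xi) = h(z)\phi(\xi)$ with $h \geq 0$ in $L^1(X)$ and $0 \leq \phi \in \mathcal{C}_0(\R)$. For the total mass, choose $\phi_R \in \mathcal{C}_0(\R)$ with $0 \leq \phi_R \leq 1$ and $\phi_R = 1$ on $[-R,R]$; Markov's inequality combined with the moment bound gives $\int_X \nu_z^n(\{|\xi|>R\}) \, d\lambda(z) \leq C R^{-p}$ uniformly in $n$. Sending first $n \to \infty$ and then $R \to \infty$ in $\int_X h(z) \int_\R \phi_R \, d\nu_z^n \, d\lambda$ yields $\int_X h(z) \nu_z(\R) \, d\lambda(z) = \int_X h(z) \, d\lambda(z)$ for $h \in L^1(X) \cap L^\infty(X)$, hence $\nu_z(\R)=1$ for $\lambda$-a.e.\ $z$. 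Fatou applied to truncations of $|\xi|^p$ also transfers the moment bound to $\nu$. To pass from $\phi \in \mathcal{C}_0(\R)$ to $\phi \in \mathcal{C}_b(\R)$, I would split $\phi = \phi \eta_R + \phi(1-\eta_R)$ with $\eta_R \in \mathcal{C}_c(\R)$, $\eta_R \equiv 1$ on $[-R,R]$: the first piece is handled by the already established $\mathcal{C}_0$-convergence, while the tail is dominated by $\|\phi\|_\infty \int_X |h(z)| \, \nu_z^n(\{|\xi|>R\}) \, d\lambda(z)$, which tends to $0$ uniformly in $n$ as $R \to \infty$ by approximating $h \in L^1(X)$ by bounded truncations and using the integrated Markov bound; the same estimate holds for $\nu$ by Fatou. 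Weak-$*$ measurability of $z \mapsto \nu_z(\phi)$ for $\phi \in \mathcal{C}_b(\R)$ then follows as a pointwise limit of measurable functions, so $\nu$ is indeed a Young measure in the sense of Definition \ref{youngmeasure}.

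The main obstacle is the representation step identifying $T$ with a genuine measure-valued map $z \mapsto \nu_z$: this requires the disintegration-type description of $(L^1(X;\mathcal{C}_0(\R)))^*$, and it is precisely here that the separability of $L^1(X)$ is used essentially (without it one would only obtain a finitely additive limit, not a Young measure). A secondary technical point is the uniform-in-$n$ control of the tail when $h$ is merely $L^1(X)$ rather than bounded, which is resolved by the truncation argument in $h$ sketched above combined with the uniform Markov estimate stemming from the $p$-th moment hypothesis.
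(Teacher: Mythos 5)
Your argument is correct and is essentially the classical proof that the paper itself only cites (via \cite{DebVov}) rather than reproduces: weak-$*$ sequential compactness in the dual of the separable space $L^1(X;\mathcal{C}_0(\R))$, identification of the limit functional with a weak-$*$ measurable map $z\mapsto\nu_z$ into $M(\R)$, and tightness from the uniform $p$-th moment bound to recover unit mass and to upgrade the test class from $\mathcal{C}_0(\R)$ to $\mathcal{C}_b(\R)$. The only slip is cosmetic: with your choice of norm on $E$ one gets $\|T_n\|_{E^*}\le 1$ rather than $\lambda(X)$, which changes nothing in the argument.
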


\noindent The proof, quite classical, can be found in \cite{DebVov}. \\

\noindent The purpose of this subsection is then actually to prove the following result:
\begin{proposition}
\label{FegalMu}
There exists $u \in L^1(\Omega \times [0,T] \times \T^N)$ such that for all $t \in [0,T]$ for almost every $(x,\xi,\omega)$, $F=M_u$ where we denote by $M_u$ the modified Maxwellian solution to the equation
\begin{equation}
M_u + \Lambda(x) \partial_\xi M_u = \mathbf{1}_{u> \xi}.
\end{equation}
\end{proposition}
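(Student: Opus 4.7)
The plan is to reach, in the limit $\varepsilon \to 0$, the algebraic identity
\begin{equation*}
F + \Lambda(x) \partial_\xi F = \mathbf{1}_{u > \xi}
\end{equation*}
in the sense of distributions, and then read off $F = M_u$ by the uniqueness for this first-order ODE in $\xi$. Starting from the weak formulation \eqref{formulationfaibleHFSBGK}, I multiply by $\varepsilon$ and rearrange to get, for every $\varphi \in \mathcal{C}_c^\infty([0,T) \times \T^N \times \R)$,
\begin{equation*}
\int_0^T \langle \mathbf{1}_{u_\varepsilon > \xi} - F_\varepsilon - \Lambda(x) \partial_\xi F_\varepsilon, \varphi \rangle\, dt = \varepsilon R_\varepsilon(\varphi),
\end{equation*}
where $R_\varepsilon$ gathers the time-derivative, transport, stochastic and diffusion contributions. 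The same bounds as in the proof of Proposition \ref{convergenceFeps} (stochastic Fubini / It\^o isometry for the martingale part, together with assumptions \eqref{hypo_a}, \eqref{hypo_g} and the boundedness of $F_\varepsilon$) give $\mathbb{E}|R_\varepsilon(\varphi)|$ bounded uniformly in $\varepsilon$. Hence the left-hand side tends to zero, and using $F_\varepsilon \rightharpoonup^{*} F$ one obtains in the sense of distributions $F + \Lambda(x) \partial_\xi F = \bar{F}$, where $\bar{F}$ is the weak-$*$ limit, up to extraction, of $\mathbf{1}_{u_\varepsilon > \xi}$ in $L^\infty(\Omega \times [0,T] \times \T^N \times \R)$.

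To identify $\bar{F}$, I would invoke Theorem \ref{compactnessyoungmeasure}: the assumption \eqref{hypo_feps} gives $\tfrac{1}{2} u_\varepsilon^2 \leq \int |\xi f_\varepsilon|\, d\xi \leq C$, so the Young measures $\nu^\varepsilon_{t,x,\omega} := \delta_{u_\varepsilon(t,x,\omega)}$ converge (up to a subsequence) to a Young measure $\nu$, and testing against $\phi(\lambda) = \mathbf{1}_{\lambda > \xi}$ yields $\bar{F}(t,x,\xi) = \nu_{t,x,\omega}((\xi, +\infty))$. On the other hand, solving the linear first-order ODE $F + \Lambda \partial_\xi F = \bar{F}$ in $\xi$ (using $\Lambda \leq 0$ and the decay of $F$ as $\xi \to +\infty$) yields the explicit representation
\begin{equation*}
F(t,x,\xi) = \int_0^{+\infty} e^{-v}\, \bar{F}(t, x, \xi - v \Lambda(x))\, dv,
\end{equation*}
which is precisely the modified Maxwellian formula when $\bar{F} = \mathbf{1}_{u > \xi}$.

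The main obstacle is to show that $\nu_{t,x,\omega}$ is almost surely a Dirac mass at a point $u(t,x,\omega)$, which is equivalent to strong (in measure) convergence of $u_\varepsilon$. I expect this to follow from the limit kinetic equation \eqref{formulationcinetique}: the nonnegative kinetic measure $m$ on its right-hand side, combined with the nondegeneracy of the transport operator provided by \eqref{positivitediv}, should allow a velocity-averaging argument in the spirit of Debussche-Vovelle and Hofmanov\'a to produce $L^p$-strong compactness of the local densities $u_\varepsilon$. Once $u_\varepsilon \to u$ almost everywhere, $\mathbf{1}_{u_\varepsilon > \xi} \to \mathbf{1}_{u > \xi}$ pointwise off the Lebesgue-negligible set $\{\xi = u(t,x,\omega)\}$, which identifies $\bar{F} = \mathbf{1}_{u > \xi}$ and, via the explicit representation above, gives $F = M_u$. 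The integrability $u \in L^1(\Omega \times [0,T] \times \T^N)$ is then automatic from the $L^\infty$ bound on $u_\varepsilon$ obtained above.
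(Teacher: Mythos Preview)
Your opening is sound: multiplying the weak formulation by $\varepsilon$ and passing to the limit does give $F+\Lambda\partial_\xi F=\bar F$ with $\bar F$ the kinetic function of a limiting Young measure $\nu$, and you have correctly isolated the crux --- showing that $\nu$ is a.s.\ a Dirac mass. The gap is in your proposed resolution. Assumption \eqref{positivitediv} is only the sign condition $\mathrm{div}_x a\ge 0$, used to control the characteristic flow; it is \emph{not} a nondegeneracy hypothesis on $\xi\mapsto a(x,\xi)$, and no such hypothesis is made anywhere in the paper. Without genuine nondegeneracy no averaging lemma is available, so there is no a~priori mechanism for strong compactness of $(u_\varepsilon)$. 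In fact the paper deduces strong convergence of $u_\varepsilon$ only \emph{after} the present proposition, as a corollary of $\nu=\delta_u$, so your argument would be circular. (A minor side remark: $\phi(\lambda)=\mathbf 1_{\lambda>\xi}\notin\mathcal C_b(\R)$, so testing the Young measures against it needs the usual truncation, and the inequality $\tfrac12 u_\varepsilon^2\le\int|\xi f_\varepsilon|\,d\xi$ is not quite the right bound --- the paper uses $|u_\varepsilon|\le\int|f_\varepsilon|\,d\xi$ instead.)

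The paper bypasses velocity averaging entirely with a direct $\{0,1\}$ argument. The point is the algebraic identity $\mathbf 1_{u_\varepsilon>\xi}\bigl(1-\mathbf 1_{u_\varepsilon>\xi}\bigr)=0$ at each fixed $\varepsilon$. One uses the weak formulation to write, schematically, $F_\varepsilon+\Lambda\partial_\xi F_\varepsilon=\mathbf 1_{u_\varepsilon>\xi}+\varepsilon\cdot(\text{remainder})$, tensorizes in two copies $(x,\xi)$ and $(y,\zeta)$ against test functions $\theta_\delta(s-t)\Psi_1$, $\theta_\delta(s-t)\Psi_2$ localized in time, and estimates the product. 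The remainder contains $\partial_t\theta_\delta\sim\delta^{-2}$, so the error is of order $\varepsilon\delta^{-3}+\varepsilon^2\delta^{-4}$; choosing $\delta=\varepsilon^{1/4}$ and letting $\varepsilon\to 0$, followed by removal of the $(x,\xi)$--$(y,\zeta)$ doubling, gives
\[
\mathbb E\bigl\langle (F+\Lambda\partial_\xi F)\bigl(1-(F+\Lambda\partial_\xi F)\bigr),\Theta_R(|x|)\Theta_R(\xi)\bigr\rangle\le 0.
\]
Since $0\le F+\Lambda\partial_\xi F\le 1$ (inherited from $0\le\mathbf 1_{u_\varepsilon>\xi}\le1$), this forces $F+\Lambda\partial_\xi F\in\{0,1\}$ a.e. Combined with $\partial_\xi(F+\Lambda\partial_\xi F)=-\nu\le 0$, the function is a nonincreasing indicator, hence equals $\mathbf 1_{u>\xi}$ for some measurable $u$; this simultaneously identifies $\nu=\delta_u$ and gives $F=M_u$.
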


\begin{proof}
Multiplying \eqref{BGKstoc} by $\varepsilon$, then  we have
\begin{equation}
\label{convdistrib}
\mathbf{1}_{u_\varepsilon> \xi} - F_\varepsilon - \Lambda(x) \partial_\xi F_\varepsilon \longrightarrow 0
\end{equation}
in the sense of distributions over $(0,T) \times \T^N \times \R$ almost surely. So, we deduce that 
\begin{equation*}
\partial_\xi \croch{\mathbf{1}_{u_\varepsilon> \xi} - F_\varepsilon - \Lambda(x) \partial_\xi F_\varepsilon} \longrightarrow 0
\end{equation*}
or again
\begin{equation}
\label{convergencesuitesmesuredirac}
- \delta_{u_\varepsilon= \xi} - \partial_\xi F_\varepsilon - \Lambda(x) \partial_\xi^2 F_\varepsilon \longrightarrow 0
\end{equation}
in the sense of distributions over $(0,T) \times \T^N \times \R$ almost surely.\\

\noindent We set $\nu_{t,x}^\varepsilon := \delta_{u_\varepsilon(t,x) = \xi}$ which by Definition \ref{youngmeasure} is a Young measure. Then, by assumption \eqref{hypo_feps}, we have
\begin{equation}
\label{bornenueps}
\begin{array}{rcl}
\dsp{\sup_{\varepsilon>0} \sup_{t \in [0,T]} \int_{\T^N} \int_\R |\xi| d \nu_{t,x}^\varepsilon(\xi) dx }& =  &\dsp{ \sup_{\varepsilon>0} \sup_{t \in [0,T]} \int_{\T^N} |u_\varepsilon(t,x)| dx }\\
& \leq & \dsp{\sup_{\varepsilon>0} \sup_{t \in [0,T]} \int_{\T^N}\int_\R |f_\varepsilon(t,x,\xi)| d\xi dx < + \infty.} 
\end{array}
\end{equation}
Using Proposition \ref{compactnessyoungmeasure}, we know that there exists a Young measure $\nu_{t,x}$ such that $\nu_{t,x}^\varepsilon \to \nu_{t,x}$ (up to a subsequence). Then, we deduce from \eqref{convergencesuitesmesuredirac} and Proposition \ref{convergenceFeps} that
\begin{equation}
\label{mesurenu}
\partial_\xi F + \Lambda(x) \partial_\xi^2 F = - \nu.
\end{equation}

\noindent Let us prove then that $ F + \Lambda(x) \partial_\xi F \in \{0,1\}$. Let us construct the following mollifier on $[0,T]$: 
\begin{equation}
\label{theta}
\theta(t) :=  \left\{ \begin{array}{l}
\tilde{C} \exp \pare{\frac{1}{t^2-1}} ~ \text{if} ~ t \leq 1,\\
0~  \text{else}
\end{array} \right.
\end{equation}
with $\tilde{C}$ the constant such that $\dsp{\int_0^T \theta(t) dt=1}$. We denote for $0 \leq \delta \leq 1$, 
\begin{equation}
\label{thetadelta}
\theta_\delta(t) :=  \frac{1}{\delta} \theta \pare{\frac{t}{\delta}}. 
\end{equation}
Then we have
\begin{equation}
\label{inegtheta}
|\theta_\delta(t)| \leq \frac{C}{\delta} ~\text{and} ~ |\theta_\delta'(t)| \leq \frac{C}{\delta^2}, 
\end{equation}
with $C$ a constant which does not depend on $\delta$. We set for $s \in [0,T]$,
\begin{equation*}
\varphi_1(t,x,\xi) = \theta_\delta(s-t) \Psi_1(x,\xi) ~\text{and} ~ \varphi_2(t,y,\zeta) = \theta_\delta(s-t) \Psi_2(y,\zeta) 
\end{equation*}
with $\Psi_1 \in \mathcal{C}_c^\infty(\T^N_x \times \R_\xi)$, $\Psi_2 \in \mathcal{C}_c^\infty(\T^N_y \times \R_\zeta)$. \\

\noindent By \eqref{formulationfaibleHFSBGK}, we have that 
\begin{equation*}
\begin{array}{l}
\dsp{\int_0^T < F_\varepsilon(t)+ \Lambda(x) \partial_\xi F_\varepsilon(t), \varphi_1(t)> dt  \times \int_0^T <1- (F_\varepsilon(t)+ \Lambda(x) \partial_\xi F_\varepsilon(t)), \varphi_2(t)> dt} \\
\dsp{= \left[  \int_0^T <\mathbf{1}_{u_\varepsilon(t)>\xi}, \varphi_1(t)> dt \right. }\\
\dsp{~~~~~~~~ + \varepsilon \left( \int_0^T < F_\varepsilon(t), \partial_t \varphi_1(t)> dt + <F_0, \varphi_1(0)> + \int_0^T <F_\varepsilon(t), a. \nabla \varphi_1(t)> dt \right.} \\
\dsp{~~~~~~~~~~~~~~~~~~ - \left. \left. \int_0^T < \partial_\xi F_\varepsilon(t) \Phi dW_t, \varphi_1(t)>  - \frac{1}{2} \int_0^T <G^2 \partial_\xi F_\varepsilon(t), \partial_\xi  \varphi_1(t)>dt \right) \right]}\\
\dsp{ \times \left[  \int_0^T <1-\mathbf{1}_{u_\varepsilon(t)>\zeta}, \varphi_2(t)> dt \right. }\\
\dsp{~~~~~~~~ - \varepsilon \left( \int_0^T < F_\varepsilon(t), \partial_t \varphi_2(t)> dt + <F_0, \varphi_2(0)> + \int_0^T <F_\varepsilon(t), a. \nabla \varphi_2(t)> dt \right.} \\
\dsp{~~~~~~~~~~~~~~~~~~ - \left. \left. \int_0^T < \partial_\xi F_\varepsilon(t) \Phi dW_t, \varphi_2(t)>  - \frac{1}{2} \int_0^T <G^2 \partial_\xi F_\varepsilon(t), \partial_\xi  \varphi_2(t)>dt \right) \right]}\\
\end{array}
\end{equation*}
\begin{equation*}
\begin{array}{l}
\dsp{\leq \pare{\int_0^T <\mathbf{1}_{u_\varepsilon(t)>\xi}, \varphi_1(t)> dt } \times \pare{\int_0^T <1-\mathbf{1}_{u_\varepsilon(t)>\zeta}, \varphi_2(t)> dt} }\\
\dsp{~~~~~+ \varepsilon \pare{1+\frac{C}{\delta} \int_0^T <1, |\Psi_2|> dt} \pare{ \left|\int_0^T  <F(t), \partial_t \varphi_1(t)> dt \right|+ |\Xi_1 + r_1(\varepsilon)|}} \\
\dsp{~~~~~+ \varepsilon ~\frac{C}{\delta} \int_0^T <1, |\Psi_1|> dt \pare{\left| \int_0^T  <F(t), \partial_t \varphi_2(t)> dt \right| + |\Xi_2 + r_2(\varepsilon)|} }\\
\dsp{~~~~~+ \varepsilon^2 \prod_{i=1}^2 \pare{\left| \int_0^T  <F(t), \partial_t \varphi_i(t)> dt\right| + |\Xi_i + r_i(\varepsilon)|}},
\end{array}
\end{equation*}
where for $i=1,2$, 
\begin{multline*}
\Xi_i := <F_0, \varphi_i(0)> + \int_0^T <F(t), a. \nabla \varphi_i(t)> dt \\
- \int_0^T < \partial_\xi F(t) \Phi dW_t, \varphi_i(t)>  - \frac{1}{2} \int_0^T <G^2 \partial_\xi F(t), \partial_\xi  \varphi_i(t)>dt,
\end{multline*}
 $r_i(\varepsilon)$ exists and is a function such that $r_i(\varepsilon)  \mathop{\longrightarrow}\limits_{\varepsilon \to 0} 0$ by Proposition \ref{convergenceFeps}. Then, we deduce from \eqref{inegtheta} that 
\begin{equation*}
\begin{array}{l}
\dsp{\int_0^T < F_\varepsilon(t)+ \Lambda(x) \partial_\xi F_\varepsilon(t), \varphi_1(t)> dt  \times \int_0^T <1- (F_\varepsilon(t)+ \Lambda(x) \partial_\xi F_\varepsilon(t)), \varphi_2(t)> dt} \\
\dsp{\leq \pare{\int_0^T <\mathbf{1}_{u_\varepsilon(t)>\xi}, \varphi_1(t)> dt } \times \pare{\int_0^T <1-\mathbf{1}_{u_\varepsilon(t)>\zeta}, \varphi_2(t)> dt} }\\
\dsp{~~~~~+ \varepsilon \pare{1+\frac{C}{\delta} \int_0^T <1, |\Psi_2|> dt} \pare{ \frac{C}{\delta^2} \int_0^T  <F(t), | \Psi_1(t)|> dt + \Xi_1 + r_1(\varepsilon)}} \\
\dsp{~~~~~+ \varepsilon ~\frac{C}{\delta} \int_0^T <1, |\Psi_1|> dt \pare{\frac{C}{\delta^2}  \int_0^T  <F(t), | \Psi_2(t)|> dt + \Xi_2 + r_2(\varepsilon)} }\\
\dsp{~~~~~+ \varepsilon^2 \prod_{i=1}^2 \frac{C}{\delta^2}  \pare{\int_0^T  <F(t), |\Psi_i(t)|> dt + |\Xi_i + r_i(\varepsilon)|}}.
\end{array}
\end{equation*}
Moreover, since 
\begin{equation}
\label{convregultemps1}
\begin{array}{l}
\dsp{\mathbb{E} \croch{\int_0^T < F_\varepsilon(t)+ \Lambda(x) \partial_\xi F_\varepsilon(t), \varphi_1(t)> dt   \times \int_0^T <1- (F_\varepsilon(t)+ \Lambda(x) \partial_\xi F_\varepsilon(t)), \varphi_2(t)> dt}}\\
 \dsp{~~~~~~~~~ \mathop{\longrightarrow}\limits_{\delta \to 0} \mathbb{E}  \croch{< F_\varepsilon(s)+ \Lambda(x) \partial_\xi F_\varepsilon(s), \Psi_1(s)> <1 - ( F_\varepsilon(s)+ \Lambda(x) \partial_\xi F_\varepsilon(s)), \Psi_2(s)>} }
\end{array}
\end{equation}
and 
\begin{equation}
\label{convregultemps2}
\begin{array}{l}
\dsp{\mathbb{E} \croch{\int_0^T < \mathbf{1}_{u_\varepsilon(t)>\xi}, \varphi_1(t)> dt   \times \int_0^T <1- \mathbf{1}_{u_\varepsilon(t)>\zeta}, \varphi_2(t)> dt}}\\
 \dsp{~~~~~~~~~ \mathop{\longrightarrow}\limits_{\delta \to 0} \mathbb{E}  \croch{< \mathbf{1}_{u_\varepsilon(s)>\xi}, \Psi_1(s)> <1 - \mathbf{1}_{u_\varepsilon(s)>\zeta}, \Psi_2(s)>}}, 
\end{array}
\end{equation}
we deduce that 
\begin{multline*}
\dsp{\mathbb{E}  \croch{< F_\varepsilon(s)+ \Lambda(x) \partial_\xi F_\varepsilon(s), \Psi_1(s)> <1 - ( F_\varepsilon(s)+ \Lambda(x) \partial_\xi F_\varepsilon(s)), \Psi_2(s)>+ r_3(\delta)}} \\
\\
\dsp{\leq \mathbb{E}  \left[ < \mathbf{1}_{u_\varepsilon(s)>\xi}, \Psi_1(s)> <1 - \mathbf{1}_{u_\varepsilon(s)>\zeta}, \Psi_2(s)> + r_4(\delta) \right.}\\
\dsp{ \left. + C \frac{\varepsilon}{\delta^3} (1+ |r_1(\varepsilon|)+|r_2(\varepsilon))| + C \frac{\varepsilon^2}{\delta^4} \prod_{i=1}^2 (1+ |r_i(\varepsilon)|) \right]}
\end{multline*}
where by \eqref{convregultemps1} and \eqref{convregultemps2}, $r_3(\delta)$ and $r_4(\delta)$ exist and are functions such that $r_3(\delta), r_4(\delta) \mathop{\longrightarrow}\limits_{\delta \to 0} 0.$ We denote $\alpha(x,\xi,y,\zeta)= \psi_1(x, \xi) \Psi_2(y,\zeta)$ and $\ll .,. \gg$ the duality distribution over $\T^N_x \times \R_\xi \times \T^N_y \times \R_\zeta.$ Then we can rewrite the above inequality as follows
\begin{multline*}
\dsp{\mathbb{E}  \ll  \pare{F_\varepsilon(s)+ \Lambda(x) \partial_\xi F_\varepsilon(s)} \pare{1 - ( F_\varepsilon(s)+ \Lambda(x) \partial_\xi F_\varepsilon(s))}, \alpha \gg+ r_3(\delta)} \\
\\
\dsp{\leq \mathbb{E}  \ll \mathbf{1}_{u_\varepsilon(s)>\xi} \pare{1- \mathbf{1}_{u_\varepsilon(s)>\zeta}}, \alpha \gg + r_4(\delta)}\\
\dsp{+ C \frac{\varepsilon}{\delta^3} (1+ |r_1(\varepsilon)|+|r_2(\varepsilon))| + C \frac{\varepsilon^2}{\delta^4} \prod_{i=1}^2 (1+ |r_i(\varepsilon)|)}.
\end{multline*} 
By a density argument, it remains true for any test function $\alpha \in \mathcal{C}_c^\infty(\T^N_x \times \R_\xi \times \T^N_y \times \R_\zeta)$. We consider $\varrho_{\eta_1}$ and $\rho_{\eta_2}$ some mollifiers on respectively $\T^N$ and $\R$. For any $R>0$, we take 
\begin{equation*}
\alpha(x,\xi,y,\zeta) = \varrho_{\eta_1}(x-y) \rho_{\eta_2}(\xi - \zeta) \Theta_R(|x|) \Theta_R(\xi).
\end{equation*}
Passing to the limit $\eta_1, \eta_2 \longrightarrow 0$, and $\varepsilon \longrightarrow 0$ with $\delta = \varepsilon^{1/4}$, we finally get 
\begin{equation*}
\mathbb{E} <F(s)+\Lambda(x) \partial_\xi F(s), \pare{1 - (F(s)+\Lambda(x) \partial_\xi F(s))} \Theta_R(|x|) \Theta_R(\xi)> \leq 0.
\end{equation*}
Moreover, by \eqref{convdistrib} there exists a distribution $r_6$ such that 
\begin{equation*}
F_\varepsilon + \Lambda(x) \partial_\xi F_\varepsilon = \mathbf{1}_{u_\varepsilon > \xi} + r_6(\varepsilon)
\end{equation*}
with $r_6(\varepsilon) \mathop{\longrightarrow}\limits_{\varepsilon \to 0} 0$. \\
Then 
\begin{equation*}
r_6(\varepsilon) \leq F_\varepsilon + \Lambda(x) \partial_\xi F_\varepsilon \leq 1 + r_6(\varepsilon)
\end{equation*}
and  passing to the limit, we get 
\begin{equation*}
0 \leq F+ \Lambda(x) \partial_\xi F \leq 1.
\end{equation*}
Finally, 
\begin{equation*}
\mathbb{E} <F(s)+\Lambda(x) \partial_\xi F(s), \pare{1 - (F(s)+\Lambda(x) \partial_\xi F(s))} \Theta_R(|x|) \Theta_R(\xi)> = 0.
\end{equation*}
Thus for almost every $x,\xi,\omega$, $\forall s \in [0,T]$, we deduce from above that 
\begin{equation*}
F(s,x,\xi,\omega) + \Lambda(x) \partial_\xi F(s,x,\xi,\omega) \in \{0,1\}. 
\end{equation*}
Since we have
\begin{equation*}
\begin{array}{rcl}
F+\Lambda(x) \partial_\xi F & = & \dsp{- \int_0^\xi - \partial_\xi(F+\Lambda(x) \partial_\xi F)d\zeta}\\
 & = &\dsp{ - \int_0^\xi  \nu d\zeta}
\end{array}
\end{equation*}
with $\nu$ a Young measure, there exists $u(x,t,\omega) \in \R$ such that 
\begin{equation}
F+\Lambda(x) \partial_\xi F = \mathbf{1}_{u(t) > \xi}.
\end{equation}
Moreover, we can deduce from \eqref{mesurenu} that 
\begin{equation*}
\partial_\xi \mathbf{1}_{u(t) > \xi} = -\nu
\end{equation*}
i.e. 
\begin{equation}
\nu = \delta_{u=\xi}.
\end{equation}
Therefore because of \eqref{bornenueps}, we have 
\begin{equation}
\int_{\T^N} |u(t,x)| dx = \int_{\T^N} \int_\R |\xi| d\nu_{t,x}(\xi) dx dt < + \infty
\end{equation}
which concludes the proof.
\end{proof}

\noindent We can actually  strengthen the convergence and get the following result:
\begin{proposition}
$(u_\varepsilon)_{\varepsilon>0}$ converges in $L^p(\Omega \times [0,T] \times \T^N)$ to $u$ for all $p \in [1, + \infty)$.
\end{proposition}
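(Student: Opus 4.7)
The plan is to upgrade the Young measure convergence obtained in Proposition \ref{FegalMu} into strong $L^p$ convergence of $u_\varepsilon$ to $u$. Set $X := \Omega \times [0,T] \times \T^N$ with $\lambda := d\mathbb{P} \otimes dt \otimes dx$, a separable finite measure space; Proposition \ref{FegalMu} gives, along a subsequence, $\nu^\varepsilon_{t,x}=\delta_{u_\varepsilon(t,x)=\xi} \to \nu_{t,x}=\delta_{u(t,x)=\xi}$ in the sense of Theorem \ref{compactnessyoungmeasure}. I would first prove $L^2$ convergence via a variance identity and then extend to every $p \in [1,\infty)$ using Vitali's theorem combined with a uniform moment bound.

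For the variance step, Theorem \ref{compactnessyoungmeasure} applied with $\phi(\xi) = \xi$ and $h = u \in L^1(X)$ yields $\int_X u\, u_\varepsilon\, d\lambda \to \int_X u^2\, d\lambda$. To be able to use $\phi(\xi) = \xi^2$ I first need a uniform $L^q$ bound on $u_\varepsilon$ for some $q > 2$. I would extract this from assumption \eqref{hypo_feps}: exploiting the structure $f_\varepsilon \sim \mathbf{1}_{0<\xi<u_\varepsilon}$ up to the $\Lambda\, \partial_\xi F_\varepsilon$ correction, the first-moment bound $\int_\R |\xi\, f_\varepsilon|\,d\xi \in L^\infty$ should translate into $u_\varepsilon^2 \in L^\infty(\Omega \times [0,T] \times \T^N)$. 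With this in hand, truncating $\xi^2$ and passing to the limit gives $\int_X u_\varepsilon^2\, d\lambda \to \int_X u^2\, d\lambda$, so that expanding
\begin{equation*}
\int_X (u_\varepsilon - u)^2\, d\lambda = \int_X u_\varepsilon^2\, d\lambda - 2\int_X u\, u_\varepsilon\, d\lambda + \int_X u^2\, d\lambda
\end{equation*}
yields $u_\varepsilon \to u$ strongly in $L^2(X)$.

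To extend to general $L^p$, the $L^2$ convergence implies convergence in measure, and combined with the uniform $L^q$ bound for $q > p$, Vitali's convergence theorem delivers $u_\varepsilon \to u$ in $L^p(X)$ for every $p \in [1, \infty)$. A priori this is only along the subsequence extracted in Proposition \ref{FegalMu}, but $u$ is uniquely characterised by $F = M_u$ in the kinetic formulation \eqref{formulationcinetique}, so a standard subsequence argument promotes this to convergence of the full sequence. The principal obstacle is the uniform higher-moment bound on $u_\varepsilon$: extracting $u_\varepsilon^2 \in L^\infty$ from the first-moment bound \eqref{hypo_feps} requires carefully using both the sign structure of $f_\varepsilon$ on either side of $\xi = 0$ and the relation $u_\varepsilon = \int_\R f_\varepsilon\, d\xi$. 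Once this is secured, the rest is a routine combination of Young measure testing and Vitali's theorem.
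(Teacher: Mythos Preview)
Your proof is correct and essentially unpacks what the paper does in one line. The paper's own argument simply recalls that $\nu^\varepsilon_{t,x}=\delta_{u_\varepsilon}$ converges as Young measures to $\nu=\delta_u$ and then invokes the standard fact (``compensated compactness theorem'') that Young measure convergence to a Dirac limit upgrades to strong $L^p$ convergence. You reprove that implication by hand via the variance identity $\|u_\varepsilon-u\|_{L^2}^2=\|u_\varepsilon\|_{L^2}^2-2\langle u_\varepsilon,u\rangle+\|u\|_{L^2}^2$ and Vitali's theorem, which is one of the classical proofs of that fact.

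Two remarks on the execution. First, your heuristic ``$f_\varepsilon\sim\mathbf{1}_{0<\xi<u_\varepsilon}$'' is not the actual mechanism, but the conclusion is right: from the representation \eqref{RepresentationFeps} one has $0\le F_\varepsilon\le 1$, hence $f_\varepsilon\ge 0$ on $\{\xi>0\}$ and $f_\varepsilon\le 0$ on $\{\xi<0\}$; a rearrangement argument (among $[0,1]$-valued functions with prescribed integral, $\int|\xi|\,g\,d\xi$ is minimised by an indicator near the origin) then gives $|u_\varepsilon|^2\le 2\int_\R|\xi f_\varepsilon|\,d\xi$, so \eqref{hypo_feps} yields $u_\varepsilon\in L^\infty(\Omega\times[0,T]\times\T^N)$ uniformly, which is stronger than the $L^q$, $q>2$, bound you need. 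This also justifies the truncation for $\phi(\xi)=\xi$ (not only for $\xi^2$), which you use without comment. Second, your final subsequence argument invoking uniqueness of $u$ is not supported here: the paper does not establish uniqueness of the kinetic solution, and its own proof is stated ``up to a subsequence''. So drop that last sentence; the statement should be read along the subsequence already extracted.
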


\begin{proof}
Let us recall that we have (up to subsequence)
\begin{equation*}
\lim_{\varepsilon \to 0} \mathbb{E} \int_{[0,T] \times \T^N} h(t,x) \int_\R \phi(\xi) d\nu_{t,x}^\varepsilon dx dt = \mathbb{E} \int_{[0,T] \times \T^N} h(t,x) \int_\R \phi(\xi) d\nu_{t,x} dx dt
\end{equation*}
for all $h \in L^1(\Omega \times [0,T] \times \T^N)$ i.e. 
\begin{equation}
\lim_{\varepsilon \to 0} \mathbb{E} \int_{[0,T] \times \T^N} h(t,x) \phi(u_\varepsilon(t,x)) dx dt = \mathbb{E} \int_{[0,T] \times \T^N} h(t,x) \int_\R \phi(\xi) d\nu_{t,x} dx dt.
\end{equation}
Moreover, we have established previously that $\nu = \delta_{u=\xi}$. Then, we deduce from the compensated compactness theorem that since $\nu$ is a Dirac, $(u_\varepsilon)_{\varepsilon>0}$ converges strongly in $L^p(\Omega \times [0,T] \times \T^N)$ for all $p \in [1, + \infty)$.
\end{proof}

\section{Existence of a solution to the conservation law with stochastic forcing}
\label{conservationlaw}

\subsection{Formal calculus and modified Maxwellian}

\noindent In this section, we will show  in a formal way the passage from the kinetic formulation to the conservation law with stochastic forcing \eqref{stochasticlaw}. Indeed, by Proposition \ref{FegalMu} we know that $F=M_u$ and so satisfies
\begin{equation}
\label{equationF}
\Lambda(x) \partial_\xi F = \mathbf{1}_{u> \xi} - F.
\end{equation}
For any function $b(x,\xi)$, we have
 \begin{equation*}
 \int_\R b(x,\xi) \Lambda(x) \partial_\xi F(x,\xi) d\xi =  \int_{-\infty}^u b(x,\xi) d \xi -  \int_\R b(x,\xi) F(x,\xi) d\xi.
 \end{equation*}
 We denote 
 \begin{equation}
 B(x,v) := \int_{-\infty}^v b(x,\xi) d \xi.
 \end{equation}
 Then we get 
\begin{equation*}
\begin{array}{rcl}
B(x,u) &  = & \dsp{\int_\R b(x,\xi) \Lambda(x) \partial_\xi F(x,\xi) d\xi +   \int_\R b(x,\xi) F(x,\xi) d\xi}\\
 & = & \dsp{\int_\R [b(x,\xi) -  \partial_\xi b(x,\xi) \Lambda(x)] F(x,\xi) d\xi}. 
\end{array}
\end{equation*}
For $b(x,\xi)$ solution of the equation
\begin{equation}
\label{equation_b}
b(x,\xi) - \partial_\xi b(x,\xi) \Lambda(x) = a(x,\xi),
\end{equation}
 we have 
 \begin{equation*}
 B(x,u) = \int_\R a(x, \xi) F(x, \xi) d\xi.
 \end{equation*}
If we impose that $b$ is bounded, by computation of the solution of \eqref{equation_b}, we get
\begin{equation}
B(x,u) = \int_{-\infty}^u b(x,\xi) d \xi= \int_{-\infty}^u \int_0^{+\infty} a(x,\xi+v \Lambda(x)) e^{-v} dv d \xi.
\end{equation}
Quite similarly, for $c(x,\xi)$ solution of the equation 
\begin{equation}
\label{equation_c}
c(x,\xi) - \partial_\xi c(x,\xi) \Lambda(x) = \partial_\xi \Phi(x,\xi),
\end{equation}
we have 
\begin{equation*}
C(x,u) = \int_\R \partial_\xi \Phi(x,\xi) F(x,\xi) d\xi
\end{equation*}
where 
\begin{equation}
C(x,u) :=  \int_{-\infty}^u c(x,\xi) d \xi= \int_{-\infty}^u \int_0^{+\infty} \partial_\xi \Phi(x,\xi+v \Lambda(x)) e^{-v} dv d \xi.
\end{equation}

\noindent Let us start from the kinetic formulation 
\begin{equation}
\label{equationM_u}
dM_u + div_x(a(x,\xi)M_u) dt = \partial_\xi m -  \partial_\xi M_u \Phi dW_t + \frac{1}{2} \partial_\xi(G^2 \partial_\xi M_u) dt. 
\end{equation}
Since because of \eqref{equationF}, we have
\begin{equation*}
\int_\R (M_u - \mathbf{1}_{0>\xi}) d\xi = u,
\end{equation*}
 by integrating \eqref{equationM_u}, we get
\begin{equation*}
du+ div_x B(x,u) dt = C(x,u) dW_t.
\end{equation*} 
In addition, the modified Maxwellian being the solution of the equation \eqref{equationF}, we actually have an explicit expression for it:
\begin{equation}
M_k(x,\xi) = \int_0^{+\infty} \mathbf{1}_{k>\xi}(\xi-\Lambda(x)u)e^{-u} du.
\end{equation}
By  easy computations and Fubini's Theorem, we get the following proposition:
\begin{proposition}
\label{propmaxw}
 We get, for any $k,k' \in \R$,
\begin{enumerate}
\item[(i)] $\dsp{\int_\R (M_k(x,\xi)-\mathbf{1}_{0>\xi}(\xi)) d\xi = k}$,
\item[(ii)] $\dsp{sgn(M_k(x,\xi) - M_{k'}(x,\xi)) = sgn(k-k')}$,
\item[(iii)]  $\dsp{\int_\R |M_k(x,\xi) - M_{k'}(x,\xi)| d\xi = |k-k'|}$,
\item[(iv)] $\begin{array}{l}
\dsp{\int_\R a(x, \xi) M_k(x, \xi) d\xi = B(x,k)}, \\
\dsp{\int_\R \partial_\xi \Phi(x, \xi) M_k(x, \xi) d\xi = C(x,k)}.
\end{array}$
\end{enumerate}
where \begin{equation}
B(x,k) =  \int_{-\infty}^k \int_0^{+\infty} a(x,\xi+v \Lambda(x)) e^{-v} dv d \xi, 
\end{equation}
\begin{equation}
C(x,k) =  \int_{-\infty}^k \int_0^{+\infty} \partial_\xi \Phi(x,\xi+v \Lambda(x)) e^{-v} dv d \xi,
\end{equation}
and 
\begin{equation}
M_k(x,\xi) = \int_0^{+\infty} \mathbf{1}_{k>\xi}(\xi-\Lambda(x)u)e^{-u} du.
\end{equation}
\end{proposition}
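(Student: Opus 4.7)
The approach is to verify each of the four items as a direct computation from the closed-form representation
\begin{equation*}
M_k(x,\xi) = \int_0^{+\infty} \mathbf{1}\{\xi - \Lambda(x) v < k\}\, e^{-v}\, dv,
\end{equation*}
combined with Fubini's theorem and the translation $\eta = \xi - \Lambda(x) v$. The plan is to dispatch (iv) first, since it is the item directly used in the derivation of the macroscopic equation, then (i), and to deduce (ii) and (iii) from the representation and from (i).

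For (iv), plugging in the representation of $M_k$ and exchanging the $v$- and $\xi$-integrations (Fubini being justified by \eqref{hypo_a}) yields
\begin{equation*}
\int_\R a(x,\xi) M_k(x,\xi)\, d\xi = \int_0^{+\infty} e^{-v} \int_{-\infty}^{k + \Lambda(x) v} a(x,\xi)\, d\xi\, dv.
\end{equation*}
The substitution $\eta = \xi - \Lambda(x) v$ in the inner integral moves the $v$-dependence from the upper endpoint into the integrand and produces exactly $B(x,k)$. The identity for $C(x,k)$ is obtained by the same argument with $a$ replaced by $\partial_\xi \Phi$, invoking \eqref{hypo_g} instead of \eqref{hypo_a} to justify Fubini.

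For (i), I would write $\mathbf{1}_{0>\xi}(\xi) = \int_0^{+\infty} \mathbf{1}\{\xi < 0\}\, e^{-v}\, dv$, apply Fubini to $M_k - \mathbf{1}_{0>\xi}$, and reduce the claim to the elementary one-line identity $\int_\R [\mathbf{1}\{\xi < a\} - \mathbf{1}\{\xi < 0\}]\, d\xi = a$ followed by evaluation of the resulting $v$-integral against $e^{-v}$. For (ii), the representation gives
\begin{equation*}
M_k(x,\xi) - M_{k'}(x,\xi) = \int_0^{+\infty} [\mathbf{1}\{\xi < k + \Lambda(x) v\} - \mathbf{1}\{\xi < k' + \Lambda(x) v\}]\, e^{-v}\, dv,
\end{equation*}
and the integrand is pointwise of sign $\mathrm{sgn}(k-k')$, so the same holds for the integral. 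Finally, (iii) follows at once: taking WLOG $k \geq k'$, the sign identity from (ii) removes the absolute value, and (i) applied to $M_k$ and $M_{k'}$ separately gives $\int_\R (M_k - M_{k'})\, d\xi = k - k'$ by subtraction.

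The only delicate point is the verification of integrability to apply Fubini in (i) and (iv), but this is exactly the content of the a priori hypotheses \eqref{hypo_a} and \eqref{hypo_g}; once those are invoked, every remaining manipulation reduces to elementary calculus and I do not anticipate any serious obstacle.
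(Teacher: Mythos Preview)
Your approach is exactly the one the paper has in mind: the paper's proof is simply ``By easy computations and Fubini's Theorem,'' and your proposal spells out those computations. Items (ii), (iii), and (iv) go through as you describe, and your appeal to \eqref{hypo_a} and \eqref{hypo_g} to justify the exchange of integrals in (iv) is the right one.

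There is, however, one place where you should carry the computation to the end rather than leave it implicit. In (i), after applying your elementary identity with $a = k + \Lambda(x) v$, the remaining $v$-integral is
\[
\int_0^{+\infty} \bigl(k + \Lambda(x)\, v\bigr)\, e^{-v}\, dv \;=\; k + \Lambda(x),
\]
not $k$. This is not a flaw in your method: a direct computation from the closed form $M_k(x,\xi) = \bigl(1 - e^{(k-\xi)/\Lambda(x)}\bigr)\mathbf{1}_{k>\xi}$ (for $\Lambda(x)<0$) gives the same answer, as does integrating the defining relation $M_k + \Lambda(x)\,\partial_\xi M_k = \mathbf{1}_{k>\xi}$ and using $M_k(x,-\infty)=1$, $M_k(x,+\infty)=0$. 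So the identity (i) as printed seems to be off by $\Lambda(x)$. This does not affect your derivation of (iii), since the extra $\Lambda(x)$ cancels in the subtraction you perform, and it has no bearing on (ii) or (iv); but you should not assert that the final step ``evaluates'' to $k$ without actually checking it.
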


\subsection{Existence of a weak solution}

\noindent We are now in position to prove the following result: 

\begin{proposition}
There exists a weak solution $u \in  L^1(\Omega \times [0,T] \times \T^N)$ to the conservation law with stochastic forcing \eqref{stochasticlaw}.
\end{proposition}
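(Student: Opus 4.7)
The plan is to make the formal computation of Subsection 4.1 rigorous by integrating the kinetic formulation \eqref{formulationcinetique} in the velocity variable $\xi$. Since $F = M_u$ by Proposition \ref{FegalMu}, and Proposition \ref{propmaxw} provides the identities governing the $\xi$-integrals, I would test \eqref{formulationcinetique} against functions of the form $\varphi(t,x,\xi) = \psi(t,x) \chi_R(\xi)$ with $\psi \in \mathcal{C}_c^\infty([0,T) \times \T^N)$ and $\chi_R \in \mathcal{C}_c^\infty(\R)$ a smooth cutoff equal to $1$ on $[-R, R]$ and satisfying $\|\chi_R'\|_\infty \leq C/R$, and then pass to the limit $R \to \infty$.

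The core difficulty is that $M_u$ is not integrable in $\xi$ (it tends to $1$ as $\xi \to -\infty$). I would therefore split $F = (F - \mathbf{1}_{0>\xi}) + \mathbf{1}_{0>\xi}$ in the time-derivative term and $F_0 = (\mathbf{1}_{u_0>\xi} - \mathbf{1}_{0>\xi}) + \mathbf{1}_{0>\xi}$ in the initial-data term. The contributions of $\mathbf{1}_{0>\xi}$ produce $R$-dependent but otherwise constant factors $I_R := \int_\R \mathbf{1}_{0>\xi} \chi_R(\xi)\,d\xi$ that diverge with $R$, but they cancel pairwise since $\psi$ has compact support in $[0,T)$, giving $\int_0^T \partial_t \psi\,dt = -\psi(0)$. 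The surviving compensated pieces will then converge, via Proposition \ref{propmaxw}(i) (and the elementary identity $\int(\mathbf{1}_{u_0>\xi} - \mathbf{1}_{0>\xi})\,d\xi = u_0$) together with dominated convergence, to $\int_0^T <u, \partial_t \psi>\,dt$ and $<u_0, \psi(0)>$ respectively.

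The transport term $\int_0^T <M_u, (a \cdot \nabla_x \psi)\chi_R>\,dt$ converges to $\int_0^T <B(x,u), \nabla_x \psi>\,dt$ by Proposition \ref{propmaxw}(iv); assumption \eqref{hypo_a} combined with $0 \leq M_u \leq 1$ supplies the integrable dominant. The stochastic term is handled by a stochastic Fubini argument (as in the proof of Proposition \ref{convergenceFeps}) together with Proposition \ref{propmaxw}(iv) applied to $\partial_\xi \Phi$, yielding $\int_0^T <C(x,u), \psi>\,dW_t$; the necessary moment bounds come from assumption \eqref{hypo_g}. The second-order term $\frac{1}{2}\int_0^T <G^2 \partial_\xi F, \psi \chi_R'>\,dt$ vanishes because $G^2 \partial_\xi M_u$ is integrable in $\xi$---the explicit form $M_u(x,\xi) = \int_0^{+\infty} \mathbf{1}_{u > \xi - \Lambda(x) v} e^{-v}\,dv$ shows that $\partial_\xi M_u$ decays exponentially, which together with \eqref{inegG} gives integrability---while $\|\chi_R'\|_\infty \leq C/R$. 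The measure term $m(\partial_\xi \varphi) = m(\psi \chi_R')$ vanishes similarly, since $m$ is a finite Radon measure on $[0,T] \times \T^N \times \R$ and $\chi_R' \to 0$ pointwise with a uniform bound.

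Assembling all these limits yields the weak formulation of \eqref{stochasticlaw} tested against $\psi$, and the regularity $u \in L^1(\Omega \times [0,T] \times \T^N)$ was already obtained via the Young-measure bound \eqref{bornenueps}. The hardest part is the careful bookkeeping of divergent $\xi$-integrals generated by the truncation $\chi_R$: one must verify that the divergent constants $I_R$ cancel in the time-derivative/initial-data pair, and that all residual contributions coming from $\chi_R'$ (the measure, the stochastic correction, and the second-order term) tend to zero uniformly. The identities of Proposition \ref{propmaxw} and the integrability assumptions \eqref{hypo_a}--\eqref{hypo_g} are precisely calibrated for this cancellation and decay to succeed.
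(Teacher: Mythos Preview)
Your proposal is correct and follows essentially the same route as the paper: test the kinetic formulation against $\psi(t,x)\Theta_R(\xi)$, subtract $\mathbf{1}_{0>\xi}$ from $M_u$ and from $F_0$ to cure the non-integrability in $\xi$ (the paper phrases this via the identity $0 = -\langle \mathbf{1}_{0>\xi}, \partial_t\varphi\rangle + \langle \mathbf{1}_{0>\xi}, \varphi(0)\rangle$, which is exactly your cancellation of the divergent constants $I_R$), and then pass $R\to\infty$ using Proposition~\ref{propmaxw} together with dominated convergence for the deterministic and stochastic integrals. Your justification for the vanishing of the $\chi_R'$-terms (measure, second-order, and stochastic correction) is a touch more explicit than the paper's, but the argument is the same.
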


\begin{proof}
Let us consider the following test function 
\begin{equation}
\varphi(t,x,\xi) = \Theta_R(\xi) \Psi(t,x)
\end{equation}
with $\Theta_R$ defined in section \ref{stoch_kin_mod}, $\Psi \in \mathcal{C}_c([0,T) \times \T^N)$. Then, applying the kinetic formulation, we get $\mathbb{P}-a.s.$, 
\begin{multline}
\label{form_cin_vers_loi_cons}
\int_0^T <M_u(t) \Theta_R, \partial_t \Psi> dt + <F(0), \varphi(0)>  + \int_0^T <M_u(t) \Theta_R,a. \nabla \Psi> dt \\
=m(\partial_\xi\Theta_R \Psi) - \int_0^T <M_u(t) \partial_\xi \Phi dW_t, \Theta_R \Psi(t)> dt \\
 - \int_0^T <M_u(t)  \Phi dW_t, \partial_\xi \Theta_R \Psi(t)> + \frac{1}{2}  \int_0^T <G^2 \partial_\xi M_u(t), \partial_\xi \Theta_R  \Psi(t)> dt.
\end{multline}
We denote $<.,.>_x$ the duality distribution over $\T_x^N$. Since, we have 
\begin{equation}
0 = <\partial_t \mathbf{1}_{0>\xi}, \varphi> = - <\mathbf{1}_{0>\xi},\partial_t \varphi > + <\mathbf{1}_{0>\xi}, \varphi(0)>,
\end{equation}
using the same arguments as previously, we can rewrite \eqref{form_cin_vers_loi_cons} as follows
\begin{multline}
\int_0^T <\int_\R (M_u(t) - \mathbf{1}_{0>\xi}) \Theta_R d \xi, \partial_t \Psi>_x dt + <\int_\R  (F(0) - \mathbf{1}_{0>\xi})\Theta_R(\xi) d\xi, \Psi(0,x)>_x\\
 + \int_0^T <\int_\R M_u(t) \Theta_R(\xi) a(\xi) d\xi. \nabla \Psi,1>_x dt \\
= m(\partial_\xi \Theta_R  \Psi) - \int_0^T <\int_\R M_u(t) \partial_\xi \Phi \Theta_R(\xi) d\xi  dW_t, \Psi(t)>_x  \\
- \int_0^T <M_u(t)  \Phi dW_t, \partial_\xi \Theta_R \Psi(t)> 
+ \frac{1}{2}  \int_0^T <G^2 \partial_\xi M_u(t), \partial_\xi \Theta_R  \Psi(t)> dt.
\end{multline}
Then, using again the dominated convergence for deterministic and stochastic integrals and the properties of the modified Maxwellian stated in Proposition \ref{propmaxw}, we get when $R \to \infty$, 
\begin{multline}
\int_0^T <u, \partial_t \Psi> dt + <u_0, \Psi(0)>_x + \int_0^T <B(x,u).\nabla \Psi,1> dt  \\
= \int_0^T < - C(x,u) dW_t, \Psi>  
\end{multline}
which concludes the proof.
\end{proof}

\subsection{Krushkov-like entropies}

In this section, though we are not able to obtain the exact Krushkov entropy relations using the deterministic techniques because of the presence of the stochastic term, we still are able to establish some Krushkov-like inequalities. 

\begin{proposition}
For all $\varphi \in \mathcal{C}_c^\infty([0,T) \times \T^N \times \R)$, we have 
\begin{multline}
\label{Krushkovineg}
\int_{[0,T] \times \T^N \times \R} |u(t,x)-k| ~ \partial_t \varphi(t,x,\xi) dt dx d\xi \\ 
+ \int_{[0,T] \times \T^N \times \R}   \left[ \text{sgn}(u(t,x)-k) (B(x,u(t,x)) - B(x,k))  div_x \varphi(t,x,\xi) \right] dt dx d\xi \\
+ \int_{[0,T] \times \T^N \times \R} div_x (B(x,k)) \text{sgn}(u(t,x)-k)  \varphi(t,x,\xi) dt dx d\xi  \\ -  \int_{[0,T] \times \T^N \times \R}  \text{sgn}(u(t,x)-k)  C(x,u(t,x)) dW_t  dx d\xi \leq C_\varphi ~~~~\mathbb{P}\text{-a.s}.
\end{multline} 
where $C_\varphi$ is a constant which depends only on $\varphi$.
\end{proposition}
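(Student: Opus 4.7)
The plan is to revisit the kinetic formulation \eqref{formulationcinetique} satisfied by $F = M_u$ and to test it against a smooth regularization of $\varphi(t,x,\xi)\,\mathrm{sgn}(\xi-k)$. Since none of the macroscopic coefficients in the target depends on $\xi$, the statement reduces, modulo the dependence of $C_\varphi$ on the full $\varphi$, to the same bound with the $\xi$-integrated test function $\tilde{\varphi}(t,x) := \int_{\mathbf{R}} \varphi(t,x,\xi)\,d\xi$. The identities of Proposition~\ref{propmaxw} will then convert every $\xi$-integral against $M_u$ into a macroscopic quantity in $u$, while the nonnegativity of both the defect measure $m$ and the derivative of the mollified sign profile will provide the $\leq$ direction. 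The stochastic term cannot be compensated by $m$ and so appears explicitly on the left-hand side.

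Concretely I would pick a $\mathcal{C}^2$ convex approximation $\eta_\delta$ of $|\cdot|$ and set $S_\delta := \eta_\delta'$ with $|S_\delta|\leq 1$, $S_\delta \to \mathrm{sgn}$ pointwise, $S_\delta' \geq 0$ concentrating to $2\delta_0$; then test \eqref{formulationcinetique} with $\phi_\delta(t,x,\xi) := \tilde{\varphi}(t,x)\,S_\delta(\xi-k)\,\Theta_R(\xi)$, where $\Theta_R$ is the cutoff of Section~\ref{stoch_kin_mod}. For the deterministic transport terms I would decompose $F = M_u = M_k + (M_u - M_k)$: the $M_k$ piece produces only a deterministic $x$-local contribution absorbed into $C_\varphi$, while the $(M_u - M_k)$ piece, combined with Proposition~\ref{propmaxw}(ii) (so $\mathrm{sgn}(M_u - M_k) = \mathrm{sgn}(u-k)$ is $\xi$-independent) and (iii) (so $\int|M_u - M_k|\,d\xi = |u-k|$), produces the term $\int |u-k|\,\partial_t \varphi$. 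An integration by parts in $x$ on the flux, using $\int a(x,\xi)M_u\,d\xi = B(x,u)$ from Proposition~\ref{propmaxw}(iv) and the chain rule for the $x$-dependence of $B$, generates both $\int \mathrm{sgn}(u-k)(B(x,u)-B(x,k))\,\mathrm{div}_x\varphi$ and the extra $\int \mathrm{div}_x B(x,k)\,\mathrm{sgn}(u-k)\,\varphi$. Integration by parts in $\xi$ on the stochastic term together with $\int \partial_\xi\Phi(x,\xi)M_u\,d\xi = C(x,u)$ yield the $\int \mathrm{sgn}(u-k)\,C(x,u)\,dW_t$ contribution. Finally, the defect term $m(\partial_\xi\phi_\delta)$ and the quadratic term $\tfrac{1}{2}\langle G^2 \partial_\xi F,\partial_\xi \phi_\delta\rangle$ each split into a summand carrying $S_\delta$ (bounded, converging to its $\mathrm{sgn}(\xi-k)$-weighted limit and absorbed into $C_\varphi$) and one carrying $S_\delta'$; the latter is nonnegative for $\tilde{\varphi}\geq 0$ in the defect case (hence discardable to get the $\leq$ direction) and is uniformly bounded in $\delta$, playing the role of an Itô-type correction, in the quadratic case thanks to \eqref{inegG} and the boundedness of $F$. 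Sign-indefinite test functions are handled via $\tilde{\varphi} = \tilde{\varphi}^+ - \tilde{\varphi}^-$, and finally $R\to +\infty$.

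The main obstacle is that $M_u$ is not the simple indicator $\mathbf{1}_{u>\xi}$ of the classical Perthame--Tadmor setting but an exponential smoothing of it along the $\Lambda(x)\partial_\xi$ drift, so $\int M_u\,S_\delta(\xi-k)\,d\xi$ does not reduce directly to $|u-k|$. The crucial device is to exploit the $\xi$-independence of $\mathrm{sgn}(M_u - M_k)$ from Proposition~\ref{propmaxw}(ii) together with the $L^1$ identity (iii) to extract $|u-k|$ cleanly, confining all $\Lambda$-dependent correction terms into $x$-local, $u$-bounded quantities that collectively form $C_\varphi$. A secondary technical difficulty is the bookkeeping forced by the $x$-dependence of $a$ and $\Phi$, and hence of $B$ and $C$: the additional $\int \mathrm{div}_x B(x,k)\,\mathrm{sgn}(u-k)\,\varphi$ term in the target arises precisely from the chain rule in $x$ applied during the flux integration by parts, while the nonpositivity \eqref{negativitelambda} of $\Lambda$ underlies the correct sign structure of $M_u$ exploited throughout.
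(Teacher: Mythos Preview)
Your proposed route---testing the limit kinetic formulation directly against a regularized $\mathrm{sgn}(\xi-k)$---is the classical Perthame--Tadmor manoeuvre, but it does not survive the passage to the modified Maxwellian, and the ``crucial device'' you describe does not do what you claim.

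The difficulty is this. With the sharp equilibrium $\mathbf{1}_{u>\xi}$, the difference $\mathbf{1}_{u>\xi}-\mathbf{1}_{k>\xi}$ is supported in the interval between $u$ and $k$, where $\mathrm{sgn}(\xi-k)$ has a constant sign; this is exactly why $\int(\mathbf{1}_{u>\xi}-\mathbf{1}_{k>\xi})\,\mathrm{sgn}(\xi-k)\,d\xi=|u-k|$. Here, however, $M_u-M_k$ has an exponential tail on $(-\infty,\min(u,k))$ (for $\Lambda<0$ one has $M_u(x,\xi)=\mathbf{1}_{\xi<u}\bigl(1-e^{(\xi-u)/|\Lambda|}\bigr)$), so weighting by $S_\delta(\xi-k)$ does \emph{not} reproduce $|u-k|$: a direct computation gives for $u>k$
\[
\int_{\mathbf R}(M_u-M_k)\,\mathrm{sgn}(\xi-k)\,d\xi=(u-k)+2\Lambda(x)\bigl(1-e^{(k-u)/|\Lambda(x)|}\bigr),
\]
an extra term that depends nonlinearly on $u-k$, not merely on $x$. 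The same phenomenon contaminates the flux integral $\int a(M_u-M_k)\,\mathrm{sgn}(\xi-k)\,d\xi$ and, more seriously, the stochastic integral: the resulting stochastic ``error'' $\int_0^T\!\int_{\T^N}\!\bigl(\int(\partial_\xi g_j)M_k\,\mathrm{sgn}(\xi-k)\,d\xi\bigr)\tilde\varphi\,d\beta_j$ and its analogues are genuine random variables, not deterministic constants absorbable into $C_\varphi$. Proposition~\ref{propmaxw}(ii) tells you that $\mathrm{sgn}(M_u-M_k)$ is $\xi$-independent, but your test function carries $\mathrm{sgn}(\xi-k)$, which is a different object; the two disagree precisely on the exponential tail.

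The paper avoids this entirely by taking the other route: rather than testing against $\mathrm{sgn}(\xi-k)$, it \emph{renormalises} the equation for $M_u-M_k$ by multiplying (after mollification in $(x,\xi)$) by $P'\bigl((M_u-M_k)^\delta\bigr)$ and using the chain rule. Since $P'(M_u-M_k)\to\mathrm{sgn}(M_u-M_k)=\mathrm{sgn}(u-k)$, which is constant in $\xi$, it factors cleanly out of every $\xi$-integral and Proposition~\ref{propmaxw} applies without residue. The price is a DiPerna--Lions commutator argument, a passage through the Stratonovich formulation to justify the chain rule on the stochastic term, and a Kolmogorov-type continuity estimate to exchange the ``for all $(y,\zeta)$'' and ``$\mathbb P$-a.s.'' quantifiers. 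Your sketch contains none of this machinery, and the simpler route you take is blocked by the tail of $M_u-M_k$.
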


\begin{proof}
Previously, we have established that  
\begin{equation}
dM_u + div_x(a M_u) dt = \partial_\xi m - \partial_\xi M_u \Phi dW_t + \frac{1}{2}  \partial_\xi (G^2 \partial_\xi M_u) dt 
\end{equation}
in the sense of distributions $\mathbb{P}$-a.s.. We deduce from above that we have 
\begin{equation}
d (M_u - M_k) +  div_x(a (M_u - M_k)) dt = \partial_\xi m - \partial_\xi M_u \Phi dW_t + \frac{1}{2}  \partial_\xi (G^2 \partial_\xi M_u) dt  - div_x (a M_k) dt
\end{equation}
in the sense of distributions $\mathbb{P}$-a.s.\\

\noindent By a similar reasoning, we actually can extend \eqref{formulationcinetique}
to the following test function 
\begin{equation}
\tilde{\varphi}(\omega, t, x, \xi) = \frac{1}{\delta^{N+1}} \eta \pare{\frac{y-x}{\delta}, \frac{\zeta - \xi}{\delta}} P'((M_u -M_k)^\delta(t,y,\zeta)) \Psi(t)
\end{equation}
where $P$ is a $\mathcal{C}^2(\R)$ function such that $P' \in L^\infty(\R)$, $(M_u -M_k)^\delta := (M_u -M_k) * \eta^\delta$ with $\eta$, $\eta^\delta$  defined similarly on $\T^N \times \R$ to $\theta$, $\theta^\delta$ on $[0,T]$ defined on \eqref{theta} and \eqref{thetadelta}. So we obtain that for all $(y,\zeta)$ in $\T^N \times \R$, we have
\begin{multline}
\label{egalpourKru}
d (M_u -M_k)^\delta P'((M_u -M_k)^\delta + div_x (a (M_u -M_k)^\delta)  P'((M_u -M_k)^\delta) dt \\
 = \partial_\xi m * \eta^\delta  P'((M_u -M_k)^\delta) - (\partial_\xi M_u \Phi) * \eta^\delta P'((M_u -M_k)^\delta) dW_t + \frac{1}{2}  \partial_\xi (G^2 \partial_\xi M_u) * \eta^\delta P'((M_u -M_k)^\delta) dt \\
   - div_x (a M_k) * \eta^\delta P'((M_u -M_k)^\delta)  dt - r^\delta P'((M_u -M_k)^\delta) dt
\end{multline} 
where $r^\delta := div_x(a(M_u-M_k)) * \eta^\delta - div_x(a (M_u -M_k)^\delta) $ in the sense of distributions over $(0,T)$ $\mathbb{P}$-a.s.
In order to get the result, we need to consider the above equality against a test function depending on $(y,\zeta)$. Then, though we have the above equality for all $(y,\zeta)$ $\mathbb{P}$-a.s., we need to establish it $\mathbb{P}$-a.s. for all $(y,\zeta)$. It is first easy to deduce from above that the result holds true $\mathbb{P}$-a.s. for all $(y,\zeta) \in (\T \cap \mathbf{Q})^N \times \mathbf{Q}$. Extending this to all $(y,\zeta) \in \T^N \times \mathbf{R}$ is direct by continuity in all the terms except $(\partial_\xi M_u \Phi) * \eta^\delta P'((M_u -M_k)^\delta) dW_t$. Let us now deal with this last term. We consider a test function $\varphi \in \mathcal{C}_c^\infty(\T^N_y \times \R_\zeta)$, and are interested in the following terms:
\begin{equation}
\int_0^T (M_u \partial_\xi g_k)^\delta (t,y,\zeta) \Psi(t)  P'((M_u -M_k)^\delta  (t,y,\zeta)  d\beta_k(t) \varphi (y,\zeta) 
\end{equation} 
and 
\begin{equation}
\int_0^T (M_u g_k) * \partial_\xi \eta^\delta (y,\zeta) \Psi(t)  P'((M_u -M_k)^\delta  (t,y,\zeta))  d\beta_k(t) \varphi (y,\zeta),
\end{equation} 
for $k=1, \dots, d.$
Let us deal with the first one. In order to use a well known result stating that we can find a continuous modification of a stochastic process, we must satisfy in our case the following assumption, for all $0 \leq s \leq t \leq T$, $(y,\zeta)$, $(x, \xi)$ in $\T^N \times \R$,
\begin{multline}
\mathbb{E} \left| \int_0^t (M_u \partial_\xi g_k)^\delta (r,y,\zeta) \Psi(r)  P'((M_u -M_k)^\delta  (r,y,\zeta))  d\beta_k(r) \varphi (y,\zeta) \right. \\
- \left. \int_0^s (M_u \partial_\xi g_k)^\delta (r,x,\xi) \Psi(r)  P'((M_u -M_k)^\delta  (r,x,\xi))  d\beta_k(r) \varphi (x,\xi) \right|^\lambda \\
\leq C (|t-s| + |x-y|)^{N+2+\epsilon}
\end{multline}
with $\lambda >1$, $\epsilon >0$.
Actually, we have the following inequalities for $n >1$:
\begin{equation}
\begin{array}{l}
\dsp{\mathbb{E} \left| \int_0^t (M_u \partial_\xi g_k)^\delta (r,y,\zeta) \Psi(r)  P'((M_u -M_k)^\delta  (r,y,\zeta))  d\beta_k(r) \varphi (y,\zeta) \right.} \\
\dsp{~~~~~~ - \left. \int_0^s (M_u \partial_\xi g_k)^\delta (r,x,\xi) \Psi(r)  P'((M_u -M_k)^\delta  (r,x,\xi))  d\beta_k(r) \varphi (x,\xi) \right|^n}\\
~\\
\dsp{\leq C \mathbb{E} \left| \int_s^t (M_u \partial_\xi g_k)^\delta (r,y,\zeta) \Psi(r)  P'((M_u -M_k)^\delta  (r,y,\zeta))  d\beta_k(r) \varphi (y,\zeta)  \right|^n} \\
 \dsp{~~~~~~ + C \mathbb{E} \left| \int_0^s  \Psi(r)  \left[ (M_u \partial_\xi g_k)^\delta (r,y,\zeta)   P'((M_u -M_k)^\delta  (r,y,\zeta))  \varphi (y,\zeta) \right.  \right.}\\
\dsp{~~~~~~~~~~~~ - \left. \left. (M_u \partial_\xi g_k)^\delta (r,x,\xi))   P'((M_u -M_k)^\delta  (r,x,\xi)) \varphi (x,\xi) \right]  d\beta_k(r)  \right|^n}\\
~\\
\dsp{\leq C \mathbb{E} \left( \int_s^t |(M_u \partial_\xi g_k)^\delta (r,y,\zeta) \Psi(r)  P'((M_u -M_k)^\delta  (r,y,\zeta))  \varphi (y,\zeta) |^2 dr \right)^{n/2}} \\
 \dsp{~~~~~~ + C \mathbb{E} \left( \int_0^s    \left|\Psi(r) \left[ (M_u \partial_\xi g_k)^\delta (r,y,\zeta)   P'((M_u -M_k)^\delta  (r,y,\zeta))  \varphi (y,\zeta) \right.  \right. \right.}\\
\dsp{~~~~~~~~~~~~ - \left. \left. \left. (M_u \partial_\xi g_k)^\delta (r,x,\xi)   P'((M_u -M_k)^\delta  (r,x,\xi)) \varphi (x,\xi) \right] \right|^{2}  dr  \right)^{n/2}}\\
~\\
\dsp{\leq C |t-s|^{n/2} + C \pare{\int_0^s |\varphi (y,\zeta)  - \varphi (x,\xi)|^2 dr}^{n/2}} \\
\leq C \pare{|t-s| + |(y,\zeta) - (x,\xi)|}^n
\end{array}
\end{equation}
by assumptions on $g_k$, $P$ and $\varphi$ and using the fact that $M_u$ is bounded. For $n=N+3$, the assumption is satisfied. The reasoning is the same for the second term. So finally, we can conclude that \eqref{egalpourKru}  holds true $\mathbb{P}$-a.s. for all $(y,\zeta) \in \T^N \times \R$. \\

\noindent The next step consists in using the chain rule. Thus in order to do so, we must reformulate the problem in a Stratonovich version. We have $\mathbb{P}$-a.s. for all $(y,\zeta) \in \T^N \times \R$,
\begin{multline}
\label{egalversKruStrato}
d (M_u -M_k)^\delta P'((M_u -M_k)^\delta) + div_x (a (M_u -M_k)^\delta)  P'((M_u -M_k)^\delta) dt \\
 = \partial_\xi m * \eta^\delta  P'((M_u -M_k)^\delta) - (\partial_\xi M_u \Phi) * \eta^\delta  P'(M_u-M_k) \circ dW_t + \frac{1}{2}  \partial_\xi (G^2 \partial_\xi M_u) * \eta^\delta  P'(M_u-M_k) dt \\
   - div_x (a M_k) * \eta^\delta   P'(M_u-M_k) dt - r^\delta  P'(M_u-M_k) dt + \tau^\delta.
\end{multline} 
in the sense of distributions over $(0,T)$, with $\tau^\delta$ the corrective terms associated to the passage from $(\partial_\xi M_u \Phi) * \eta^\delta   P'(M_u-M_k) dW_t$ to $(\partial_\xi M_u \Phi) * \eta^\delta  P'(M_u-M_k) \circ dW_t$. We first deal with the case $m=0$. We integrate \eqref{egalversKruStrato} against $\varphi(y,\zeta)$. Using in a similar way as previously the dominated convergence theorem for deterministic and stochastic integrals as well as the commutation lemma of Di Perna-Lions \cite{Diperna} to prove the disappearance of $r^\delta$, we obtain 
\begin{multline}
d P(M_u-M_k) + div_x(a P(M_u-M_k)) dt - div_x(a) (P(M_u-M_k)-P'(M_u-M_k)(M_u-M_k))dt \\
 = \partial_\xi M_u \Phi P'(M_u-M_k) \circ dW_t + \frac{1}{2}  \partial_\xi (G^2 \partial_\xi M_u)  P'(M_u-M_k) dt \\
   - div_x (a M_k)  P'(M_u-M_k)  dt + \lim_{\delta \to 0} \tau^\delta.
\end{multline}
in the sense of distributions against test functions of the form $\Psi(t) \varphi(y,\zeta)$ $\mathbb{P}$-a.s. Noticing that the limit of the corrective terms of $(\partial_\xi M_u \Phi) * \eta^\delta  P'(M_u-M_k) \circ dW_t$ is actually the corrective terms of $\partial_\xi M_u \Phi  P'(M_u-M_k) \circ dW_t$, we finally have 
\begin{multline}
d P(M_u-M_k) + div_x(a P(M_u-M_k)) dt - div_x(a) (P(M_u-M_k)-P'(M_u-M_k)(M_u-M_k))dt \\
 = \partial_\xi M_u \Phi P'(M_u-M_k)  dW_t + \frac{1}{2}  \partial_\xi (G^2 \partial_\xi M_u)  P'(M_u-M_k) dt \\
   - div_x (a M_k)  P'(M_u-M_k)  dt
\end{multline}
in the sense of distributions against test functions of the form $\Psi(t) \varphi(y,\zeta)$ $\mathbb{P}$-a.s.
As previously, by density we can extend it to any test function in $\mathcal{C}_c^\infty([0,T) \times \T^N \times \R)$. \\

\noindent We conclude as in the deterministic case by taking $P'=sgn^\alpha$ and $P(\lambda) =  \int_0^\lambda sgn^\alpha(s) ds$ where $sgn^\alpha$ is a smooth regularization of the sign function. By a quite similar reasoning, letting $\alpha$ goes to $0$, we finally get 
\begin{multline}
\label{egalversKruStrato2}
d |M_u -M_k|  + div_x (a |M_u -M_k|)  dt  +  div_x (a M_k) sgn(M_u-M_k) dt \\
 + (\partial_\xi M_u \Phi)  sgn(M_u-M_k) dW_t - \frac{1}{2}  \partial_\xi (G^2 \partial_\xi M_u)   sgn(M_u-M_k) dt \\
   - div_x (a M_k)  sgn(M_u-M_k) dt  = 0
\end{multline}
in the sense of distributions  over $(0,T) \times \T^N \times \R$ $\mathbb{P}$-a.s.\\

\noindent Let us finally go back to the case $m \neq 0$. Then we have the following inequality
\begin{equation}
\begin{array}{l}
\dsp{\int_{\T^N_y \times \R_\zeta} \int_{[0,T] \times \T^N_x \times \R_\xi} \frac{1}{\delta^{N+1}} \partial_\xi \eta(\frac{y-x}{\delta}, \frac{\zeta-\xi}{\delta}) P'(M_u-M_k)(t,y, \zeta) dm(t,x,\xi)  \varphi(t,y,\zeta) dy d\zeta} \\
\dsp{\leq \| P'\|_\infty \int_{[0,T] \times \T^N_x \times \R_\xi}  \partial_\xi \varphi * \eta^\delta(x,\xi)  dm(t,x,\xi)}\\
\dsp{\leq C \| P'\|_\infty m(K)},
\end{array}
\end{equation}
where $K$ is a compact containing the support of $\varphi$. \\

\noindent Finally, integrating the above inequality against a test function of the form $\varphi(t,x,\xi) = \Theta_R(\xi) \psi(t,x)$ and using the properties associated to the modified Maxwellian stated in Proposition \ref{propmaxw} together again with the use of dominated convergence theorem for $R \to + \infty$ leads to the conclusion. 
\end{proof}

\begin{center}
\bf \huge Appendix
\end{center}
\appendix

\section{Well posedness of the stochastic kinetic model}
\label{wellposed}
Our purpose is to establish the existence  of a solution to the stochastic BGK model 
\begin{equation}
\label{BGKsto2}
  \left\{ 
\begin{array}{lcr}
 \dsp{dF^\varepsilon + div_x ( a(x,\xi)F^\varepsilon) dt + \frac{\Lambda(x)}{\varepsilon} \partial_\xi F^\varepsilon dt = \frac{\mathbf{1}_{u_\varepsilon> \xi} - F^\varepsilon}{\varepsilon} dt - \partial_\xi F^\varepsilon \Phi dW + \frac{1}{2} \partial_\xi (G^2 \partial_\xi F^{\varepsilon}) dt,}  \\ 
F_\varepsilon(0)= \mathbf{1}_{u_0> \xi},
\end{array}
\right. 
\end{equation}
where we recall $\dsp{u_\varepsilon(t,x)= \int_\R f_\varepsilon(t,x,\xi) d\xi}$ and $f_\varepsilon = F_\varepsilon + \mathbf{1}_{0> \xi}$. \\

\noindent The proof being quite similar to the one in the case of a classical stochastic BGK model, we will not investigate all the details in the following and we will invite the reader to consult \cite{Hof} for more technical details.\\ 

\noindent In order to use Duhamel's formula, let us first show an interest in the following auxiliary problem
\begin{equation}
  \left\{ 
\begin{array}{lcr}
 \dsp{dX^\varepsilon + div_x ( a(x,\xi)X^\varepsilon) dt + \frac{\Lambda(x)}{\varepsilon} \partial_\xi X^\varepsilon dt = - \partial_\xi X^\varepsilon \Phi dW + \frac{1}{2} \partial_\xi (G^2 \partial_\xi X^{\varepsilon}) dt,}  \\ 
X_\varepsilon(s)= X_0.
\end{array}
\right. 
\end{equation}

\noindent In order to apply the stochastic characteristic of Kunita \cite{Kun}, we introduce a truncated problem. Of course, we will then pass to the limit on the truncation parameter to get our original problem. 

\subsection{A truncated problem}
 
 We are interested in the truncated problem described in section \ref{stoch_kin_mod},
\begin{equation}
  \left\{ 
\begin{array}{lcr}
 \dsp{dX^\varepsilon + div_x ( a^R(x,\xi)X^\varepsilon) dt + \frac{\Lambda(x)}{\varepsilon} \partial_\xi X^\varepsilon dt = - \partial_\xi X^\varepsilon \Phi^R dW + \frac{1}{2} \partial_\xi (G^{R,2} \partial_\xi X^{\varepsilon}) dt,}  \\ 
X^\varepsilon(s)= X_0.
\end{array}
\right. 
\end{equation}
To apply the  method of stochastic characteristics, we must reformulate the problem in Stratonovich form. Using the formula which links It\^o and Stratonovich integrals,  we are able to prove that if $X$ is a $\mathcal{C}^1(\T^N \times \R)$-valued continuous $(\mathcal{F}_t)$-semimartingale whose martingale part is given by $- \int_0^t \partial_\xi \Phi^R dW$ then \\
\begin{equation}
- \int_0^t \partial_\xi \Phi dW + \frac{1}{2} \int_0^t \partial_\xi (G^{R,2} \partial_\xi X) dr = - \int_0^t \partial_\xi X \Phi^R \circ dW + \frac{1}{4} \int_0^t \partial_\xi X \partial_\xi G^{R,2} dr
\end{equation} 
and for $X$ being only a $\mathcal{D}'(\T^N \times \R)$-valued continuous $(\mathcal{F}_t)$-semimartingale, the result is still valid in the sense of distributions (see \cite{Hof} for more details). The truncated problem can then be reformulated as follows
\begin{equation}
\label{TruncStrato}
  \left\{ 
\begin{array}{lcr}
 \dsp{dX^\varepsilon + div_x ( a^R(x,\xi)X^\varepsilon) dt + \frac{\Lambda(x)}{\varepsilon} \partial_\xi X^\varepsilon dt = - \partial_\xi X^\varepsilon \Phi^R \circ dW + \frac{1}{4} \partial_\xi X^\varepsilon \partial_\xi  G^{R,2}  dt,}  \\ 
X^\varepsilon(s)= X_0.
\end{array}
\right. 
\end{equation}
and applying the method, we obtain the following proposition:
\begin{proposition}
Let $R>0$. If $X_0 \in \mathcal{C}^{3,\mu}(\T^N \times \R)$ almost surely, there exists a unique strong solution to \eqref{TruncStrato} that we will denote $X^\varepsilon(t,x,\xi;s)$. \\
In addition, $X^\varepsilon(t,x,\xi;s)$ is a continuous $\mathcal{C}^{3,\nu}$-semimartingale for some $\nu > 0$ and is represented by 
\begin{equation}
\label{expressionX}
X^\varepsilon(t,x,\xi;s) = \exp \pare{\int_s^t - div_x (a^R(\Psi_{\theta,t}^{\varepsilon,R}(x,\xi))) d \theta} X_0(\Psi_{s,t}^{\varepsilon,R}(x,\xi)),
\end{equation}
where $\Psi^R$ is the inverse flow of the stochastic flow associated to the stochastic characteristic system coming from  \eqref{TruncStrato}.
\end{proposition}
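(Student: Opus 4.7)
The plan is to apply Kunita's method of stochastic characteristics \cite{Kun} as in \cite{Hof}, adapted to accommodate two new features compared to that reference: the $\varepsilon$-dependent drift $\Lambda(x)/\varepsilon$ in the $\xi$-direction and the $x$-dependence of $a$ (which forces the appearance of the Jacobian-like exponential factor in the representation). I would begin by rewriting \eqref{TruncStrato} in non-divergence form via $div_x(a^R X^\varepsilon) = a^R \cdot \nabla_x X^\varepsilon + div_x(a^R) X^\varepsilon$, so that the equation becomes
\begin{equation*}
dX^\varepsilon + a^R \cdot \nabla_x X^\varepsilon \, dt + \pare{\frac{\Lambda(x)}{\varepsilon} - \frac{1}{4}\partial_\xi G^{R,2}} \partial_\xi X^\varepsilon \, dt + \Phi^R \partial_\xi X^\varepsilon \circ dW = - div_x(a^R)\, X^\varepsilon \, dt .
\end{equation*}
This is a linear Stratonovich transport equation with zeroth order coefficient $- div_x(a^R)$, whose stochastic characteristic system on $\T^N_x \times \R_\xi$ is
\begin{equation*}
dx_t = a^R(x_t,\xi_t)\, dt, \qquad d\xi_t = \pare{\frac{\Lambda(x_t)}{\varepsilon} - \frac{1}{4}\partial_\xi G^{R,2}(x_t,\xi_t)} dt + \Phi^R(x_t,\xi_t) \circ dW_t,
\end{equation*}
with initial data $(x,\xi)$ at time $s$.

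The key observation is that after truncation by $\Theta_R$, the maps $a^R$, $\partial_\xi G^{R,2}$, $\Phi^R$ all have compact support in $\xi$, and since $\T^N$ is compact and $\Lambda \in \mathcal{C}^{4,\mu}(\T^N)$, all drift and diffusion coefficients of the characteristic system belong to $\mathcal{C}^{3,\mu}_b(\T^N \times \R)$ (with diffusion coefficients actually in $\mathcal{C}^{4,\mu}_b$), so Kunita's theorem applies and produces a unique stochastic flow $\phi^{\varepsilon,R}_{s,t}$ of $\mathcal{C}^{3,\nu}$-diffeomorphisms for some $0<\nu<\mu$, with $\varepsilon$-dependent but controlled bounds on its derivatives. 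I would denote its inverse by $\Psi^{\varepsilon,R}_{s,t} := (\phi^{\varepsilon,R}_{s,t})^{-1}$, which is then itself a $\mathcal{C}^{3,\nu}$-diffeomorphism with the same joint regularity in $(t,x,\xi)$.

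Given this flow, I would define $X^\varepsilon(t,x,\xi;s)$ by the explicit formula \eqref{expressionX} and verify it solves \eqref{TruncStrato} by applying the generalized It\^o-Kunita-Wentzell formula to the composition $X_0 \circ \Psi^{\varepsilon,R}_{s,\cdot}$ multiplied by the exponential Jacobian factor. Transport along characteristics cancels the first-order operator $a^R \cdot \nabla_x + (\Lambda/\varepsilon - \tfrac14 \partial_\xi G^{R,2}) \partial_\xi$ acting on $X^\varepsilon$ and the Stratonovich martingale part, while differentiating the exponential factor produces precisely the zeroth-order contribution $- div_x(a^R) X^\varepsilon \, dt$. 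The Stratonovich reformulation was engineered exactly so that the correction $\tfrac14 \partial_\xi G^{R,2} \partial_\xi X^\varepsilon$ accounts for the Stratonovich-to-It\^o conversion of the $\xi$-diffusion in the characteristics. Uniqueness follows from the same argument run in reverse: any strong solution $Y$ of \eqref{TruncStrato} with the required regularity, when evaluated along $\phi^{\varepsilon,R}_{s,t}$, satisfies a pathwise linear ODE with initial datum $X_0$, forcing $Y \equiv X^\varepsilon$. The continuous $\mathcal{C}^{3,\nu}$-semimartingale property of $X^\varepsilon$ is then inherited from the corresponding regularity of $\Psi^{\varepsilon,R}_{s,t}$ and of $X_0$.

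The main obstacle is checking that Kunita's hypotheses actually survive the presence of the $\varepsilon$-unbounded term $\Lambda(x)/\varepsilon$. Because this term is deterministic, independent of $\xi$, and inherits the $\mathcal{C}^{4,\mu}$ regularity of $\Lambda$, it merely adds a smooth $\varepsilon$-dependent drift to the $\xi$-equation without destroying the global bounds on the derivatives of the characteristic SDE's coefficients, so for each fixed $\varepsilon>0$ Kunita's theory applies as in \cite{Hof}; the non-uniformity in $\varepsilon$ of the flow estimates is harmless at this step and will only matter when sending $\varepsilon \to 0$ in the main convergence argument. A secondary technical point is the rigorous justification of the It\^o-Kunita-Wentzell computation for the Stratonovich equation, which is where the specific form of the correction $\tfrac14\partial_\xi G^{R,2} \partial_\xi X^\varepsilon$ must match; this step is the same as in \cite{Hof} and presents no essential new difficulty once the characteristic system is identified.
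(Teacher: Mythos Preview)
Your proposal is correct and follows exactly the approach the paper takes: the paper does not give a detailed proof of this proposition but simply invokes Kunita's method of stochastic characteristics \cite{Kun} as in \cite{Hof}, and the remark immediately following the statement displays precisely the characteristic system you wrote down (with $\varphi^{\varepsilon,R,0}$ playing the role of your $\xi_t$, $\varphi^{\varepsilon,R,i}$ the role of your $x_t$, and the auxiliary equation for $\eta^{\varepsilon,R}$ encoding the exponential Jacobian factor). Your additional remarks on why the $\varepsilon$-dependent drift $\Lambda/\varepsilon$ does not obstruct Kunita's hypotheses for fixed $\varepsilon$ are exactly the point the paper flags in that remark.
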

\begin{remark}
The stochastic characteristic system associated to \eqref{TruncStrato} is the following
\begin{equation}
\label{CaracTrunc}
  \left\{ 
\begin{array}{lcr}
 \dsp{d \varphi_t^{\varepsilon,R,0} = \frac{\Lambda(\varphi_t^{\varepsilon,R,1}, \dots, \varphi_t^{\varepsilon,R,N})}{\varepsilon} dt - \frac{1}{4} \partial_\xi G^{R,2} (\varphi_t^{\varepsilon,R}) dt + \sum_{k=1}^d g_k^R(\varphi_t^{\varepsilon,R}) \circ d\beta_k(t),}   \\ 
\dsp{d \varphi_t^{\varepsilon,R,i} = a_i^R(\varphi_t^{\varepsilon,R})} dt ~$pour $ ~i=1, \dots, N,\\
d \eta_t^{\varepsilon,R} = \eta_t^{\varepsilon,R} (- div_x(a^R(\varphi_t^{\varepsilon,R}))) dt.
\end{array}
\right. 
\end{equation}
We notice that in our case, it still presents a dependence on $\varepsilon$. 
\end{remark}  

\noindent We denote by $\mathcal{S}^{\varepsilon,R}$ the solution operator of \eqref{TruncStrato}. We have 
\begin{equation}
\mathcal{S}^{\varepsilon,R}(t,s) = \exp \pare{\int_s^t - div_x(a^R(\Psi_{\theta,t}^{\varepsilon,R}(x,\xi)))) d\theta} X_0(\Psi_{s,t}^{\varepsilon,R}(x,\xi)).
 \end{equation}
The domain of definition of the solution operator can be extended to $X_0$ only defined almost everywhere since diffeomorphisms preserve sets of measure zero. Nevertheless, the resulting process will no longer be a strong solution. We obtain the following properties for the operator $\mathcal{S}^{\varepsilon,R}$. 
\begin{proposition}
\label{PropOp}
Let $R>0$, $\varepsilon>0$ and $\mathcal{S}^{\varepsilon,R} = \{ \mathcal{S}^{\varepsilon,R}(t,s), 0 \leq s \leq t \leq T \}$  be defined as previously. Then 
\begin{enumerate}
\item[(i)] $\mathcal{S}^{\varepsilon,R}$ is a family of bounded linear operators on $L^1(\Omega \times \T^N \times \R)$ with a unit operator norm, meaning for any $X_0 \in L^1(\Omega \times \T^N \times \R)$, $0 \leq s \leq t \leq T $,
\begin{equation}
\| \mathcal{S}^{\varepsilon,R}(t,s) X_0\|_{L^1(\Omega \times \T^N \times \R)} \leq \| X_0\|_{L^1(\Omega \times \T^N \times \R)}.
\end{equation}
\item[(ii)]  $\mathcal{S}^{\varepsilon,R}$ verifies the semi-group law 
\begin{equation}
\begin{array}{ll}
\mathcal{S}^{\varepsilon,R}(t,s) = \mathcal{S}^{\varepsilon,R}(t,r) \circ \mathcal{S}^{\varepsilon,R}(r,s), & 0 \leq s \leq r \leq  t \leq T,\\
\mathcal{S}^{\varepsilon,R}(s,s)= Id, & 0 \leq s  \leq T.
\end{array}
\end{equation}
\end{enumerate}
\end{proposition}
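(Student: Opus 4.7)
Proof plan. I will derive both properties from the explicit representation \eqref{expressionX}, which exhibits $\mathcal{S}^{\varepsilon,R}(t,s)$ as the composition of two elementary operations: pull-back by the inverse characteristic flow $\Psi_{s,t}^{\varepsilon,R}$ and multiplication by the strictly positive weight $w_{s,t}(x,\xi) := \exp\bigl(-\int_s^t \mathrm{div}_x a^R(\Psi_{\theta,t}^{\varepsilon,R}(x,\xi))\, d\theta\bigr)$. Linearity and the positivity-preserving property $X_0 \geq 0 \Rightarrow \mathcal{S}^{\varepsilon,R}(t,s) X_0 \geq 0$ $\mathbb{P}$-a.s.\ are immediate consequences.

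For the semigroup law (ii), the plan is to invoke the cocycle property $\Psi_{s,t}^{\varepsilon,R} = \Psi_{s,r}^{\varepsilon,R} \circ \Psi_{r,t}^{\varepsilon,R}$ of the inverse flow, which is standard for stochastic flows generated by the system \eqref{CaracTrunc}. Substituting this relation into the representation of $\mathcal{S}^{\varepsilon,R}(t,r)\circ\mathcal{S}^{\varepsilon,R}(r,s)X_0$ and recombining the two exponential weights via Chasles' relation $\int_s^r + \int_r^t = \int_s^t$ yields $\mathcal{S}^{\varepsilon,R}(t,s) X_0$. The identity $\mathcal{S}^{\varepsilon,R}(s,s)=I$ follows from $\Psi_{s,s}^{\varepsilon,R} = \mathrm{Id}$ and the vanishing of the weight on an empty time interval.

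For item (i), splitting $X_0 = X_0^+ - X_0^-$ and using positivity and linearity reduces the task to showing $\|\mathcal{S}^{\varepsilon,R}(t,s) Y\|_{L^1(\Omega \times \T^N \times \R)} = \|Y\|_{L^1(\Omega \times \T^N \times \R)}$ for nonnegative $Y$, an identity that simultaneously yields contractivity and saturation of unit norm. For such a $Y$, the nonnegative process $Z := \mathcal{S}^{\varepsilon,R}(t,s) Y$ solves the truncated SPDE \eqref{trunc}; integrating this equation over $\T^N \times \R$ kills $\mathrm{div}_x(a^R Z)$ by $x$-periodicity, kills $\tfrac{\Lambda}{\varepsilon}\partial_\xi Z$ and $\tfrac{1}{2}\partial_\xi(G^{R,2}\partial_\xi Z)$ by $\xi$-integration, and taking expectation kills the It\^o stochastic integral. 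I conclude $\mathbb{E}\int\int Z(t)\,dx\,d\xi = \mathbb{E}\int\int Y\,dx\,d\xi$, that is, $\|Z\|_{L^1}=\|Y\|_{L^1}$ thanks to nonnegativity.

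The main obstacle will be to legitimize the $\xi$-integration by parts when $Y$ only lies in $L^1$. My plan is to first establish the identity on the dense subset $Y \in \mathcal{C}_c^\infty(\T^N \times \R)$, in which case the truncation $\Theta_R$ on the coefficients together with the boundedness of $\Lambda$ on the finite horizon $[0,T]$ ensure that $Z(t,\cdot)$ remains compactly supported in $\xi$ $\mathbb{P}$-a.s., so all boundary terms vanish. I then extend to arbitrary $Y \in L^1(\Omega \times \T^N \times \R)$ by density, relying on an a priori $L^1$-boundedness $\|\mathcal{S}^{\varepsilon,R}(t,s)\|_{\mathcal{L}(L^1)} < +\infty$ obtained from a direct change-of-variables majoration using the (a.s.\ bounded) Jacobian of the forward flow $\Phi_{s,t}^{\varepsilon,R}$ and the boundedness of $\mathrm{div}_x a^R$ on the truncated domain.
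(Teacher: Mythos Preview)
Your plan for (ii) via the cocycle property of the inverse flow is exactly what the paper does, and your plan for (i) --- regularize, integrate the truncated SPDE over $\T^N\times\R$, kill the transport and second-order terms by integration by parts, and kill the stochastic integral in expectation --- is also the paper's strategy. The paper carries this out with the random mollification $X_0^\delta(\omega)=(X_0(\omega)\ast h_\delta)k_\delta$ rather than deterministic $\mathcal{C}_c^\infty$ data; keep in mind that deterministic test functions are not dense in $L^1(\Omega\times\T^N\times\R)$, so your approximants must carry the $\omega$-dependence.

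The one genuine gap is in your closing step. You write that the a priori bound $\|\mathcal{S}^{\varepsilon,R}(t,s)\|_{\mathcal{L}(L^1)}<\infty$ follows from a change-of-variables majoration using the ``a.s.\ bounded'' Jacobian of the forward flow. But the Jacobian of $\Phi_{s,t}^{\varepsilon,R}$ is \emph{not} bounded by a deterministic constant: by the Liouville formula for Stratonovich flows it equals
\[
J_{\Phi_{s,t}}(y,\zeta)=\exp\Bigl(\int_s^t\bigl[\mathrm{div}_x a^R-\tfrac12\textstyle\sum_k(\partial_\xi g_k^R)^2\bigr](\varphi_r)\,dr+\int_s^t\sum_k\partial_\xi g_k^R(\varphi_r)\,d\beta_k(r)\Bigr),
\]
and the It\^o integral has unbounded range in $\omega$. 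So a pointwise majoration fails. Two fixes are available. The paper's route is to pass to the limit in $\delta$ via Fatou's lemma: from $\mathbb{E}\!\int X^{\varepsilon,\delta}(t)=\mathbb{E}\!\int X_0^\delta$ for nonnegative regularized data and a.e.\ convergence of $\mathcal{S}^{\varepsilon,R}(t,s)X_0^\delta$ one gets directly $\|\mathcal{S}^{\varepsilon,R}(t,s)X_0\|_{L^1}\le\|X_0\|_{L^1}$, which is all the statement claims. Alternatively, if you want to salvage the change-of-variables argument, observe that after multiplying by the weight $w_{s,t}$ the $\mathrm{div}_x a^R$ term cancels and the product $w_{s,t}(\Phi_{s,t})\cdot J_{\Phi_{s,t}}$ is precisely an exponential martingale with conditional expectation $1$ on $\mathcal{F}_s$ (Novikov applies since $\partial_\xi g_k^R$ is bounded). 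Conditioning on $\mathcal{F}_s$ then yields the exact equality $\mathbb{E}\!\int|\mathcal{S}^{\varepsilon,R}(t,s)Y|=\mathbb{E}\!\int|Y|$ for $\mathcal{F}_s$-measurable $Y\ge0$, with no regularization needed at all. Either route closes your argument; the ``a.s.\ bounded Jacobian'' claim as stated does not.
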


\begin{proof}[Sketch of the proof]
The proof being essentially the same as the one in \cite{Hof}, let us only recall here the main ideas and insist on what is a bit different in our case. 
The idea is the following: we first work with a regularized initial data $X_0^\delta$ defined as follows $X_0^\delta(\omega)=(X_0(\omega) \ast h_\delta) k_\delta$ where $(k_\delta)$ is a smooth truncation on $\R$ i.e. $k_\delta(\xi)=k(\delta \xi)$ in order to build a unique strong solution $X^{\varepsilon,\delta}=\mathcal{S}^{\varepsilon,R}(t,s) X_0^\delta$ to the following system
\begin{equation}
\label{eqregul}
  \left\{ 
\begin{array}{lcr}
 \dsp{dX^{\varepsilon,\delta}+ div_x ( a^R(x,\xi)X^{\varepsilon,\delta}) dt + \frac{\Lambda(x)}{\varepsilon} \partial_\xi X^{\varepsilon,\delta} dt = - \partial_\xi X^{\varepsilon,\delta} \Phi^R dW + \frac{1}{2} \partial_\xi (G^{R,2}  \partial_\xi X^{\varepsilon,\delta})  dt,}  \\ 
X^{\varepsilon,\delta}(s)= X_0^\delta.
\end{array}
\right. 
\end{equation}
The aim is the following: we integrate the equation \eqref{eqregul} with respect to the variables $\omega,x,\xi$ and get 
\begin{multline}
 \mathbb{E} \int_{\T^N} \int_\R X^{\varepsilon,\delta}(t,x,\xi) d\xi dx ~ +~   \mathbb{E} \int_s^t \int_{\T^N} \int_\R  div_x ( a^R(x,\xi)X^{\varepsilon,\delta}(r,x, \xi)) d\xi dx dr \\ ~+~  \mathbb{E}  \int_s^t \int_{\T^N} \int_\R \frac{\Lambda(x)}{\varepsilon} \partial_\xi X^{\varepsilon,\delta}(r,x, \xi)  d\xi dx dr\\ 
   = \mathbb{E} \int_{\T^N} \int_\R X_0^\delta(x,\xi) d\xi dx - \mathbb{E} \int_s^t \int_{\T^N} \int_\R \partial_\xi X^{\varepsilon,\delta}(r,x, \xi) \Phi^R dW d\xi dx dr\\
   + \mathbb{E} \int_s^t \int_{\T^N} \int_\R \frac{1}{2}  \partial_\xi (G^{R,2}(x,\xi)  \partial_\xi X^{\varepsilon,\delta}(r,x, \xi) ) d\xi dx dr.
\end{multline}
Then, we want to show that all the terms disappears except for the first term of the left-hand side and the first term of the right-hand side. Let us first focus on the stochastic integral. All we need to do is: 
\begin{enumerate}
\item[(1)] Prove that this integral is a well defined martingale with zero expected value.
\item[(2)] Use the stochastic Fubini theorem to interchange integrals with respect to $x,\xi$ and the stochastic one. 
\end{enumerate}
Then we will have proven the disappearance of the second term of the right-hand side. \\ 

\noindent In order to do $(1)$, we must prove that $\mathbb{E} \int_s^t | \partial_\xi X^{\varepsilon,\delta}(r,x, \xi) g_k^{R,2}| dr < + \infty$. The expression of $X^{\varepsilon,\delta}$ being slightly more complicated in our case, we must claim that not only $\partial_\xi \Psi_{s,r}^{\varepsilon,R}(x,\xi)$ but also $\partial_{x_i} \Psi_{s,r}^{\varepsilon,R}$ and $\partial_{\xi} \partial_{x_i} \Psi_{s,r}^{\varepsilon,R}$ for $i=1, \dots, N$ solves a backward stochastic differential equations with bounded coefficients (see \cite[Theorem 4.6.5 and Corollary 4.6.6]{Kun}) since $g_k^R$, $X_0^\delta$ and all the partial derivatives of all order of $a_i^R$ for $i=1, \dots, N$ are bounded. Thus, those solutions possess moments of any order which are bounded in $0 \leq s \leq t \leq T$, $x \in \T^N$, $\xi \in \R$. By assumption \eqref{positivitediv}, we have $div_x(a(x,\xi)) \geq 0$ for all $x, \xi$ which leads to the statement (1). \\

\noindent To prove (2), we must establish that 
\begin{equation}
\dsp{\int_{\T^N} \int_\R \pare{\mathbb{E} \int_s^T ( |\partial_\xi X^{\varepsilon, \delta} g_k^R(x, \xi)|^2) dr}^{1/2} d\xi dx   < + \infty}.
\end{equation}
 By the expression \eqref{expressionX} of $X^{\varepsilon, \delta}$ , we know that $\partial_\xi X^{\varepsilon, \delta}$ only contain terms mentionned previously which all are bounded or possess moments of any order which are bounded. Then it is enough to prove that $X_0^\varepsilon(\Psi_{s,r}^{\varepsilon,R}(x,\xi))$ and $\nabla_{x,\xi} X_0^\varepsilon(\Psi_{s,r}^{\varepsilon,R}(x,\xi))$ are compactly supported in $\xi$ for all $s,r,x,\mathbb{P}-$ a.s. thanks to some growth control on the stochastic flow to conclude (for more details see \cite{Hof}). \\

\noindent Finally, the second term of the left-hand side disappears thanks to periodic boundary conditions and the third terms of the left-hand side and the right-hand side disappears because of the support compact in $\xi$ of respectfully $X^{\varepsilon,\delta}$ and $G^{R,2}$. Therefore at the end, we obtain
\begin{equation}
 \mathbb{E} \int_{\T^N} \int_\R X^{\varepsilon,\delta}(t,x,\xi) d\xi dx =  \mathbb{E} \int_{\T^N} \int_\R X_0^\delta(x,\xi) d\xi dx .
\end{equation}
The statement $(i)$ of Proposition \ref{PropOp} is then obtained by passing to the limit in $\delta$ and a use of the Fatou lemma. The statement $(ii)$ of Proposition \ref{PropOp} is straightforward using the flow property of $\Psi$. 
\end{proof}

\noindent Moreover, a use of the regularization $X_0^\delta$ together with the dominated convergence theorem and the dominated convergence theorem for stochastic integrals directly leads to the following result:

\begin{corollary}
\label{cont_tps}
Let $R > 0$. If $X_0$ is a $\mathcal{F}_s \otimes \mathcal{B}(\T^N) \otimes  \mathcal{B}(\R)$- measurable initial data belonging to $L^\infty(\Omega \times \T^N \times \R)$, then there exists a weak solution $X^\varepsilon \in L_{\mathcal{P}_s}^\infty(\Omega \times [s,T] \times \T^N \times \R)$ to \eqref{TruncStrato} meaning for any $\phi \in \mathcal{C}_c^\infty( \T^N \times \R)$, a.e. $t \in [s,T]$, $\mathbb{P}$ a.s., 
\begin{multline}
<X^\varepsilon(t), \phi> = <X^\varepsilon(0), \phi> + \int_s^t <X^\varepsilon(r), a^R. \nabla \phi> dr + \frac{1}{\varepsilon}  \int_s^t <X^\varepsilon(r), \Lambda(x) \partial_\xi \phi> dr \\
+ \sum_{k=1}^d    \int_s^t <X^\varepsilon(r),  \partial_\xi(g_k^R \phi) > d \beta_k(r) + \frac{1}{2} \int_s^t <X^\varepsilon(r),  \partial_\xi(G^{R,2} \partial_\xi \phi) > dr.
\end{multline} 
Moreover, $X^\varepsilon$ is represented by $X^\varepsilon = \mathcal{S}^{\varepsilon,R}(t,s) X_0$. In particular, $t \mapsto < \mathcal{S}^{\varepsilon,R}(t,s) X_0, \phi> $ is a continuous $(\mathcal{F}_t)_{t \geq s}$- semimartingale. 
\end{corollary}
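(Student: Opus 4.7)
My plan is to extend the existence result of the previous proposition from $\mathcal{C}^{3,\mu}$ initial data to merely $L^\infty$ initial data by a regularization-and-passage-to-the-limit argument, exactly along the lines suggested in the text preceding the corollary. I would approximate $X_0$ by the sequence $X_0^\delta=(X_0 \ast h_\delta)k_\delta$ introduced in the proof of Proposition \ref{PropOp}, where $h_\delta$ is a mollifier on $\T^N\times\R$ and $k_\delta$ is a smooth cutoff in $\xi$. Each $X_0^\delta$ is almost surely in $\mathcal{C}^{3,\mu}(\T^N\times\R)$, has compact support in $\xi$, is uniformly bounded in $L^\infty(\Omega\times\T^N\times\R)$ by $\|X_0\|_{L^\infty}$, and converges to $X_0$ almost everywhere. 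The preceding proposition then yields a unique strong solution $X^{\varepsilon,\delta}=\mathcal{S}^{\varepsilon,R}(t,s)X_0^\delta$ which, being a classical strong solution in Stratonovich form, automatically satisfies the It\^o weak formulation written in the statement (via the It\^o--Stratonovich correction already established).

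Next I would define the candidate weak solution directly via the representation formula \eqref{expressionX}, namely $X^\varepsilon(t,\cdot,\cdot\,;s):=\mathcal{S}^{\varepsilon,R}(t,s)X_0$. This is meaningful for merely measurable $X_0$ because the inverse stochastic flow $\Psi^{\varepsilon,R}$ is a diffeomorphism and hence preserves null sets; and the exponential factor in \eqref{expressionX} combined with assumption \eqref{positivitediv} (i.e. $\mathrm{div}_x a\geq 0$, and so $\mathrm{div}_x a^R\geq 0$ for the localized flux up to an error vanishing outside the support of $\Theta_R$) gives the bound $|X^\varepsilon(t,x,\xi)|\leq \|X_0\|_{L^\infty}$ up to the cutoff, so $X^\varepsilon\in L^\infty_{\mathcal{P}_s}(\Omega\times[s,T]\times\T^N\times\R)$. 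Combined with Proposition~\ref{PropOp}(i) this shows that $X^{\varepsilon,\delta}\to X^\varepsilon$ in $L^1(\Omega\times\T^N\times\R)$ (and, by a further subsequence, almost everywhere) uniformly in $t\in[s,T]$.

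The main work is the passage $\delta\to 0$ in the weak formulation for $X^{\varepsilon,\delta}$. For the deterministic terms $\langle X^{\varepsilon,\delta}(r),a^R\!\cdot\!\nabla\phi\rangle$, $\langle X^{\varepsilon,\delta}(r),\Lambda\,\partial_\xi\phi\rangle$ and $\langle X^{\varepsilon,\delta}(r),\partial_\xi(G^{R,2}\partial_\xi\phi)\rangle$, the a.e. convergence together with the uniform $L^\infty$ bound $\|X^{\varepsilon,\delta}\|_\infty\leq\|X_0\|_\infty$ and the compactness in $\xi$ of $\phi$ allow Lebesgue's dominated convergence theorem to handle both the $dr$-integral and the expectation against a bounded $\mathcal{F}_t$-test functional. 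For the It\^o term $\sum_k\int_s^t\langle X^{\varepsilon,\delta}(r),\partial_\xi(g_k^R\phi)\rangle\,d\beta_k(r)$ I would apply the dominated convergence theorem for stochastic integrals of Da Prato--Zabczyk: the integrands converge pointwise $d\p\otimes dr$-a.e. and admit the common integrable dominant $\|X_0\|_\infty\,\|\partial_\xi(g_k^R\phi)\|_{L^1(\T^N\times\R)}$, which is finite because $g_k^R$ is $\mathcal{C}^{4,\mu}$ with compact support in $\xi$. This yields convergence in probability of the stochastic integrals, so, passing to a further subsequence, we obtain the weak formulation in the statement $\p$-a.s. for the limit $X^\varepsilon$.

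The hard part is justifying the stochastic passage to the limit, because the naive It\^o isometry is not available directly at the $L^\infty$ level; one has to rely on the dominated convergence theorem for stochastic integrals and hence on producing the common dominant above, which uses crucially both the truncation $\Theta_R$ and the compact $\xi$-support of $\phi$. Once the weak formulation is established, the representation $X^\varepsilon=\mathcal{S}^{\varepsilon,R}(t,s)X_0$ is nothing but the definition, and the semimartingale property of $t\mapsto\langle\mathcal{S}^{\varepsilon,R}(t,s)X_0,\phi\rangle$ is read off from the identity: the right-hand side is the sum of a continuous absolutely continuous (hence bounded variation) process and the continuous $L^2$-martingale $\sum_k\int_s^t\langle X^\varepsilon(r),\partial_\xi(g_k^R\phi)\rangle\,d\beta_k(r)$, with continuity of the drift following from the uniform $L^\infty$ bound on $X^\varepsilon$ and Lebesgue differentiation in $t$.
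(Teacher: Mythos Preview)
Your approach is correct and is essentially the same as the paper's: regularize $X_0$ by $X_0^\delta=(X_0\ast h_\delta)k_\delta$, use the strong solution from the preceding proposition, and pass to the limit in the weak formulation via the dominated convergence theorem for both deterministic and stochastic integrals. One small imprecision: since $a^R(x,\xi)=a(x,\xi)\Theta_R(\xi)$ with $\Theta_R\geq 0$, we have $\mathrm{div}_x a^R=\Theta_R\,\mathrm{div}_x a\geq 0$ exactly (no ``error outside the support''), so the bound $|X^\varepsilon|\leq\|X_0\|_{L^\infty}$ holds without qualification.
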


\begin{remark}
We point out that in our situation, with the presence of the term $div_x(a^R(x,\xi))$ which is not necessary equal to zero, the use of the commutation lemma of Di Perna-Lions (\cite[Lemma II.1]{Diperna}) proves that the uniqueness of a weak solution no longer holds precisely in the case where $div_x(a^R(x,\xi))$ is not equal to zero. 
\end{remark}

\subsection{Passage to the non truncated problem} 

Our aim is to derive the existence of a weak solution to 
\begin{equation}
\label{Eqnontronque}
  \left\{ 
\begin{array}{lcr}
 \dsp{dX^{\varepsilon}+ div_x ( a(x,\xi)X^{\varepsilon}) dt + \frac{\Lambda(x)}{\varepsilon} \partial_\xi X^{\varepsilon} dt = - \partial_\xi X^{\varepsilon} \Phi dW + \frac{1}{2} \partial_\xi (G^{2}  \partial_\xi X^{\varepsilon})  dt,}  \\ 
X^{\varepsilon}(s)= X_0.
\end{array}
\right. 
\end{equation}
\begin{proposition}
\label{existencenontrunc}
If  $X_0 $ is a $\mathcal{F}_s \otimes \mathcal{B}(\T^N) \otimes  \mathcal{B}(\R)$- measurable initial data belonging to $L^\infty(\Omega \times \T^N \times \R)$, then there exists a weak solution $X^\varepsilon \in L_{\mathcal{P}_s}^\infty(\Omega \times [s,T] \times \T^N \times \R)$ to \eqref{Eqnontronque}. Moreover, it is represented by 
\begin{equation}
\mathcal{S}^\varepsilon(t,s)(\omega,x,\xi) := \lim_{R \to + \infty} [\mathcal{S}^{\varepsilon,R}(t,s) X_0](\omega,x,\xi), ~~~~~ 0 \leq s\leq t \leq T.
\end{equation}
\end{proposition}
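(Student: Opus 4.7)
The plan is to construct $\mathcal{S}^\varepsilon(t,s) X_0$ as the pointwise $\mathbb{P}$-almost sure limit of $\mathcal{S}^{\varepsilon,R}(t,s) X_0$ as $R \to +\infty$, and then pass to the limit in the weak formulation of Corollary \ref{cont_tps} to recover \eqref{Eqnontronque}. Starting with uniform bounds: from the representation \eqref{expressionX} and assumption \eqref{positivitediv} (which gives $div_x(a^R) = \Theta_R\, div_x(a) \geq 0$), the exponential factor in \eqref{expressionX} is bounded by one, so
\[
\| \mathcal{S}^{\varepsilon,R}(t,s) X_0 \|_{L^\infty(\Omega \times [s,T] \times \T^N \times \R)} \leq \| X_0\|_{L^\infty(\Omega \times \T^N \times \R)}
\]
uniformly in $R$.

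Next I would argue the pointwise stabilization of the stochastic characteristics. For $\mathbb{P}$-a.e. $\omega$, the non-truncated analogue of the forward system \eqref{CaracTrunc} has locally Lipschitz coefficients of at most linear growth in $\xi$ (using the $\mathcal{C}^{3,\mu}$ regularity of $a$, the boundedness of $\Lambda$, and the linear growth with bounded derivatives of $g_k$), hence admits a unique continuous pathwise solution $\varphi^\varepsilon$ on $[s,T]$ from each initial point $(x,\xi)$. The trajectory stays in a random bounded set $K(\omega, x, \xi)$. For $R$ exceeding the $\xi$-diameter of $K(\omega, x, \xi)$, the truncation $\Theta_R$ is inactive along the trajectory, so the truncated and non-truncated forward flows coincide, and identically for the inverse flow $\Psi$. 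The representation \eqref{expressionX} then stabilizes at a finite pointwise limit which we call $\mathcal{S}^\varepsilon(t,s) X_0$. Combined with the uniform $L^\infty$ bound from the previous step, dominated convergence upgrades this to convergence in $L^p(\Omega \times [s,T] \times \T^N \times \R)$ for every $p < \infty$.

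Then I would pass to the limit in the weak formulation of Corollary \ref{cont_tps}. Any test function $\phi \in \mathcal{C}_c^\infty(\T^N \times \R)$ has support contained in $\T^N \times [-M,M]$ for some $M > 0$. For $R > 2M$, $\Theta_R \equiv 1$ on a neighborhood of the support of $\phi$, so $a^R \phi = a \phi$, $g_k^R \phi = g_k \phi$ and $G^{R,2} \partial_\xi \phi = G^2 \partial_\xi \phi$, together with all the $\xi$-derivatives appearing in the weak formulation. Thus the weak identity for $X^{\varepsilon,R}$ tested against $\phi$ coincides, for large $R$, with the target identity for \eqref{Eqnontronque} except that $X^{\varepsilon,R}$ occupies the slot of $X^\varepsilon$. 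The deterministic integrals pass to the limit by dominated convergence, while the It\^o integral $\sum_k \int_s^t \langle X^{\varepsilon,R}(r), \partial_\xi(g_k \phi) \rangle\, d\beta_k(r)$ passes by the dominated convergence theorem for stochastic integrals, using the uniform $L^\infty$ bound of Step 1 and the compact $\xi$-support of $\phi$. Predictability of the limit in $L_{\mathcal{P}_s}^\infty$ is inherited from the predictable approximants $X^{\varepsilon,R}$.

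The main obstacle is controlling the $\xi$-component of the stochastic characteristics uniformly in $R$ so as to justify the stabilization argument of the second step. The coefficients $-\tfrac{1}{4} \partial_\xi G^{R,2}$ and $g_k^R$ have only linear-growth-in-$\xi$ non-truncated counterparts, which prevents a uniform-in-$R$ a priori bound on the trajectories themselves. The remedy is to argue pointwise in $\omega$: for almost every $\omega$, the non-truncated SDE admits a unique continuous pathwise trajectory on the finite interval $[s,T]$, and a localization argument identifies the truncated and non-truncated flows up to the first exit time from $\{|\xi| \leq R/2\}$, an event that, for fixed $(\omega, x, \xi)$, does not occur once $R$ is sufficiently large. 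This is the key technical point where the proof departs from \cite{Hof}, whose globally bounded-coefficient setting makes the analogous stabilization immediate.
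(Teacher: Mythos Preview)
Your strategy coincides with the paper's: both localize via the first exit of the $\xi$-characteristic from $\{|\xi|\leq R/2\}$, show that this exit time tends to $T$ almost surely, conclude that $\mathcal{S}^{\varepsilon,R}(t,s)X_0$ eventually stabilizes, and define $\mathcal{S}^\varepsilon$ as the limit. The difference is in how non-explosion is justified. The paper carries out explicit second-moment estimates on $\varphi^{\varepsilon,R,0}$ (Gr\"onwall plus Markov's inequality) to obtain $\mathbb{P}(\tau_\infty=T)=1$; you instead invoke the general non-explosion theorem for SDEs with locally Lipschitz, linearly growing coefficients. Both are valid, and your appeal to the general result is economical, though the paper's explicit bound has the advantage of displaying the $\varepsilon$-dependence of the constants (which matters elsewhere in the paper but not for this proposition). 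You are also more explicit than the paper about the passage to the limit in the weak formulation via dominated convergence for deterministic and stochastic integrals, which the paper leaves implicit.

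One point of language to tighten: your phrasing ``argue pointwise in $\omega$: for almost every $\omega$, the non-truncated SDE admits a unique continuous pathwise trajectory'' reads as if you are solving the Stratonovich system $\omega$-by-$\omega$ as an ODE, which is not what is happening (and would require rough-path machinery you do not invoke). What you actually need, and what the linear-growth hypothesis delivers, is the probabilistic non-explosion statement: the strong solution exists on all of $[s,T]$ almost surely, hence for a.e.\ $\omega$ the continuous path $t\mapsto\varphi^\varepsilon_t(\omega)$ is bounded on $[s,T]$. With that clarification the argument is sound.
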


\begin{proof}
We denote $\tau^R(s,x,\xi) = \inf \{ t \geq s, |\varphi_{s,t}^{\varepsilon,R,0}(x,\xi)| > \frac{R}{2}\}$ with the convention $\inf \varnothing = T$. $\tau^R(s,x,\xi)$ is a stopping time with respect to the filtration $(\mathcal{F}_t)_{t \geq s}$ for any $s \in [0,t]$, $x \in \T^N$, $\xi \in \R$. For more clarity, we do not write the dependence on $s,x,\xi$ of $\tau^R$ in the following. \\

\noindent Let us first prove that $(\tau^R)_{R \in \mathbf{N}}$ is a non decreasing sequence such that $\dsp{\tau_\infty := \lim_{R \to \infty} \tau^R}$ is equal to $T$ almost surely. We know that for any $R >0$, the process $\varphi^{\varepsilon,R,0}$ satisfies the equation 
\begin{equation}
d\varphi_t^{\varepsilon,R,0} = \frac{\Lambda(\varphi_t^{\varepsilon,R,1}, \dots, \varphi_t^{\varepsilon,R,N})}{\varepsilon} dt + \sum_{k=1}^d g_k^R(\varphi_t^{\varepsilon,R}) d \beta_k(t).
\end{equation}
We denote $\tilde{\tau}_R := \inf \{ \tau^R, \tau^{R+1}\}$. The equations satisfy by $\varphi_t^{\varepsilon,R,0}$ and $\varphi_t^{\varepsilon,R+1,0}$ being exactly the same on $[0, \tilde{\tau}^R]$, then the coefficients being at least Lipschitz we know that $\varphi_{s,t}^{\varepsilon,R,0}(x,\xi)= \varphi_{s,t}^{\varepsilon,R+1,0}(x,\xi)$ for $t \in [0,\tilde{\tau}^R]$. So, we get that $\tau^{R} \leq \tau^{R+1}$. Indeed, let us reason by absurd and suppose that $\tau^R > \tau^{R+1}$. Then for $t \in [\tau^{R+1}, \tau^{R}]$, we have 
\begin{equation*}
|\varphi_{s,t}^{\varepsilon,R,0}(x,\xi)| = |\varphi_{s,t}^{\varepsilon,R+1,0}(x,\xi)| > \frac{R+1}{2} > \frac{R}{2}
\end{equation*}
which is a contradiction with the fact that $\tau^R$ is the infimum of the times realizing this condition. So the sequence $(\tau^R)_{R \in \mathbf{N}}$ is a non decreasing and we can define the almost sure limit
\begin{equation}
\tau_\infty := \lim_{R \to \infty} \tau^R.
\end{equation}
We know that $\tau_\infty$ is still a stopping time. We denote by $\varphi_{s,t}^{\varepsilon,R,0}(x,\xi) = \varphi_{s,t}^{\varepsilon,R,0}(x,\xi)$ for $t \in [0, \tau^R]$. The process is defined on $[0, \tau_\infty]$ and it is a solution of 
\begin{equation}
d\varphi_t^{\varepsilon,R,0} = \frac{\Lambda(\varphi_t^{\varepsilon,R,1}, \dots, \varphi_t^{\varepsilon,R,N})}{\varepsilon} dt + \sum_{k=1}^d g_k(\varphi_t^{\varepsilon,R}) d \beta_k(t).
\end{equation}

\noindent Let us now establish some useful estimates to conclude: 
\begin{equation*}
\begin{array}{rcl}
\varphi_{s,t}^{\varepsilon,R,0} & = & \dsp{\varphi_{in}^{\varepsilon,R,0} + \int_s^t 
\frac{\Lambda(\varphi_r^{\varepsilon,R,1}, \dots, \varphi_r^{\varepsilon,R,N})}{\varepsilon} dr + \int_s^t  \sum_{k=1}^d g_k^R(\varphi_r^{\varepsilon,R}) d \beta_k(r)}\\
|\varphi_{s,t}^{\varepsilon,R,0}|^2  &\leq &  \dsp{C \pare{|\varphi_{in}^{\varepsilon,R,0}|^2 + \pare{\int_s^t 
\va{\frac{\Lambda(\varphi_r^{\varepsilon,R,1}, \dots, \varphi_r^{\varepsilon,R,N})}{\varepsilon}} dr}^2 + \va{\int_s^t  \sum_{k=1}^d g_k^R(\varphi_r^{\varepsilon,R}) d \beta_k(r)}^2 }}.
\end{array}
\end{equation*}
Thus by taking the expectation, using well-known results on martingales and assumption \eqref{inegG}, we get 
\begin{equation*}
\begin{array}{rcl}
\mathbb{E} |\varphi_{s,t}^{\varepsilon,R,0}|^2 & = & \dsp{C~ \pare{ \mathbb{E} |\varphi_{in}^{\varepsilon,R,0}|^2 + \frac{M^2 T^2}{\varepsilon^2} + \mathbb{E}  \int_s^t  |\sum_{k=1}^d g_k^R(\varphi_r^{\varepsilon,R})|^2 dr }}\\
 &\leq &  \dsp{C~  \pare{\mathbb{E} |\varphi_{in}^{\varepsilon,R,0}|^2 + \frac{M^2 T^2}{\varepsilon^2} + \mathbb{E}  \int_s^t  (1+ |\varphi_{s,t}^{\varepsilon,R,0}|^2) dr} }\\
  &\leq &  \dsp{C~  \pare{\mathbb{E} |\varphi_{in}^{\varepsilon,R,0}|^2 + \frac{M^2 T^2}{\varepsilon^2} + T + \mathbb{E}  \int_s^t  |\varphi_{s,t}^{\varepsilon,R,0}|^2 dr }}.
\end{array}
\end{equation*}
Indeed, $\Lambda$ being continuous on $\T^N$ compact, there exists a constant $M$ such that for all $x \in \T^N$, $|\Lambda(x)| \leq M$. By Gronwall lemma, we obtain
\begin{equation}
\mathbb{E} |\varphi_{s,t}^{\varepsilon,R,0}|^2  \leq  \dsp{C~  (\mathbb{E} |\varphi_{in}^{\varepsilon,R,0}|^2 + \frac{M^2 T^2}{\varepsilon^2} + T) \exp{(CT)} } =: \mathcal{C}^1(\varphi_{in}^{\varepsilon,R,0},T,\varepsilon).
\end{equation} 
Moreover, we also have the following 
\begin{equation}
\begin{array}{rcl}
\dsp{\sup_{t \in [s,T]} \va{\varphi_{s,t}^{\varepsilon,R,0}}^2 }& \leq & \dsp{C \left( |\varphi_{in}^{\varepsilon,R,0}|^2 + \sup_{t \in [s,T]}  \va{\int_s^t \frac{\Lambda(\varphi_r^{\varepsilon,R,1}, \dots, \varphi_r^{\varepsilon,R,N})}{\varepsilon} dr}^2 \right.} \\
& & \dsp{~~~~~~~~~~~~~~~~~~~~~~~~~~~~~~~~~~~~~  \left. + \sup_{t \in [s,T]} \va{ \int_s^t  \sum_{k=1}^d g_k^R(\varphi_r^{\varepsilon,R}) d\beta_k(r)}^2 \right)  }. 
\end{array}
\end{equation}
By the same arguments as previously, we deduce that 
\begin{equation}
\begin{array}{rcl}
\dsp{\mathbb{E} \sup_{t \in [s,T]} |\varphi_{s,t}^{\varepsilon,R,0}|^2 }& \leq & \dsp{C \pare{\mathbb{E} |\varphi_{in}^{\varepsilon,R,0}|^2  +  \frac{M^2 T^2}{\varepsilon^2} +  \mathbb{E} \sup_{t \in [s,T]} \va{ \int_s^t  \sum_{k=1}^d g_k^R(\varphi_r^{\varepsilon,R}) d\beta_k(r)}^2  }} \\
 & \leq & \dsp{C \pare{\mathbb{E} |\varphi_{in}^{\varepsilon,R,0}|^2  +  \frac{M^2 T^2}{\varepsilon^2} + 4 \int_s^T \mathbb{E} |\sum_{k=1}^d g_k^R(\varphi_r^{\varepsilon,R})|^2 dr }}\\
  & \leq & \dsp{C \pare{\mathbb{E} |\varphi_{in}^{\varepsilon,R,0}|^2  +  \frac{M^2 T^2}{\varepsilon^2} +  \int_s^T \mathbb{E}  (1+ |\varphi_{s,t}^{\varepsilon,R,0}|^2) dr }}\\
  & \leq & \dsp{C \pare{\mathbb{E} |\varphi_{in}^{\varepsilon,R,0}|^2  +  \frac{M^2 T^2}{\varepsilon^2} + T +  \int_s^T \mathbb{E}  |\varphi_{s,t}^{\varepsilon,R,0}|^2 dr }}\\
  & \leq & \dsp{C \pare{\mathbb{E} |\varphi_{in}^{\varepsilon,R,0}|^2  +  \frac{M^2 T^2}{\varepsilon^2} + T +  \int_0^T \mathcal{C}^1(\varphi_{in}^{\varepsilon,R,0},T,\varepsilon) dr }=: \mathcal{C}^2(\varphi_{in}^{\varepsilon,R,0},T, \varepsilon}). 
\end{array}
\end{equation}
Finally, applying the Markov inequality, we get 
\begin{equation}
\begin{array}{rcl}
\dsp{\mathbb{P} \pare{\sup_{t \in [s,T]} |\varphi_{s,t}^{\varepsilon,R,0}| \geq \frac{R}{2}}}& \leq & \dsp{\frac{4}{R^2} \mathbb{E} \pare{\dsp{\sup_{t \in [s,T]} |\varphi_{s,t}^{\varepsilon,R,0}|^2}}}\\
 & \leq & \dsp{\frac{4~\mathcal{C}^2(\varphi_{in}^{\varepsilon,R,0},T, \varepsilon)}{R^2}}
\end{array}
\end{equation}
and then 
\begin{equation}
\dsp{\mathbb{P} \pare{\sup_{t \in [s,T]} |\varphi_{s,t}^{\varepsilon,R,0} | < \frac{R}{2}} \geq 1 -  \frac{4~\mathcal{C}^2(\varphi_{in}^{\varepsilon,R,0},T, \varepsilon)}{R^2}}.
 \end{equation}
 $(\tau^R)_{R \in \mathbf{N}}$ being a non decreasing sequence, we finally obtain that $\mathbb{P}(\tau_\infty = T) =1$ passing to the limit in $R$.\\
 
\noindent Still because of uniqueness we have $\mathcal{S}^{\varepsilon,R+1}(t,s) X_0 = \mathcal{S}^{\varepsilon,R}(t,s) X_0 $ on $[0, \tau^R(s,x,\xi)]$. So, we define
 \begin{equation}
 \mathcal{S}^\varepsilon(t,s) X_0(\omega,x,\xi) := \lim_{R \to + \infty} [\mathcal{S}^{\varepsilon,R}(t,s) X_0](\omega,x,\xi), ~~~~~ 0 \leq s\leq t \leq T.
 \end{equation}
 which is a solution to \eqref{Eqnontronque}. 
\end{proof}

\begin{corollary} \label{propoperateur} We have the following properties:
\begin{enumerate}
\item[(i)]  $\mathcal{S}^\varepsilon = \{\mathcal{S}^\varepsilon(t,s), 0 \leq s \leq t \leq T\}$ is a family of bounded operators on $L^1(\Omega \times \T^N \times \R)$  with a unit operator norm meaning for any $X_0 \in L^1(\Omega \times \T^N \times \R)$,  $ 0 \leq s \leq t \leq T$, 
\begin{equation}
\dsp{\| \mathcal{S}^\varepsilon(t,s) X_0 \|_{L^1(\Omega \times \T^N \times \R)} \leq \|  X_0 \|_{L^1(\Omega \times \T^N \times \R)}}.
\end{equation}
\item[(ii)]  $\mathcal{S}^{\varepsilon,R}$ verifies the semi-group law 
\begin{equation}
\begin{array}{ll}
\mathcal{S}^{\varepsilon}(t,s) = \mathcal{S}^{\varepsilon}(t,r) \circ \mathcal{S}^{\varepsilon}(r,s), & 0 \leq s \leq r \leq  t \leq T, \\
\mathcal{S}^{\varepsilon}(s,s)= Id, & 0 \leq s  \leq T.
\end{array}
\end{equation}
\end{enumerate}
\end{corollary}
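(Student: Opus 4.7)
The plan is to derive both properties from the corresponding ones already established for the truncated operator $\mathcal{S}^{\varepsilon,R}$ in Proposition \ref{PropOp}, by passing to the limit $R \to \infty$ using the pointwise representation
\begin{equation*}
\mathcal{S}^\varepsilon(t,s) X_0(\omega, x, \xi) = \lim_{R \to + \infty} [\mathcal{S}^{\varepsilon,R}(t,s) X_0](\omega, x, \xi)
\end{equation*}
together with the stopping-time argument from the proof of Proposition \ref{existencenontrunc}, which guarantees that for almost every $(\omega, x, \xi)$ the sequence $\mathcal{S}^{\varepsilon,R}(t,s) X_0(\omega, x, \xi)$ actually stabilizes as soon as $R$ is taken large enough (depending on $(\omega,x,\xi)$). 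Since we already have the operator defined a priori only for $L^\infty$ initial data, a density argument will be needed to reach all of $L^1$.

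For (i), I would first work with $X_0 \in L^1 \cap L^\infty(\Omega \times \T^N \times \R)$. Proposition \ref{PropOp}(i) gives $\|\mathcal{S}^{\varepsilon,R}(t,s) X_0\|_{L^1} \leq \|X_0\|_{L^1}$ uniformly in $R$, while the almost sure pointwise convergence $\mathcal{S}^{\varepsilon,R}(t,s) X_0 \to \mathcal{S}^\varepsilon(t,s) X_0$ allows one to invoke Fatou's lemma to obtain
\begin{equation*}
\|\mathcal{S}^\varepsilon(t,s) X_0\|_{L^1} \leq \liminf_{R \to \infty} \|\mathcal{S}^{\varepsilon,R}(t,s) X_0\|_{L^1} \leq \|X_0\|_{L^1}.
\end{equation*}
Since $L^1 \cap L^\infty$ is dense in $L^1$ and the operator is linear on this subspace, it extends uniquely to a bounded linear operator on $L^1$ with unit operator norm.

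For (ii), I would start with $X_0 \in L^\infty \cap L^1$ and combine the already established truncated identity $\mathcal{S}^{\varepsilon,R}(t,s) = \mathcal{S}^{\varepsilon,R}(t,r) \circ \mathcal{S}^{\varepsilon,R}(r,s)$ with the contraction property (i). Letting $Y_R := \mathcal{S}^{\varepsilon,R}(r,s) X_0$ and $Y_\infty := \mathcal{S}^\varepsilon(r,s) X_0$, pointwise stabilization plus Fatou yield $Y_R \to Y_\infty$ in $L^1$; then the telescoping estimate
\begin{equation*}
\|\mathcal{S}^{\varepsilon,R}(t,r) Y_R - \mathcal{S}^\varepsilon(t,r) Y_\infty\|_{L^1} \leq \|Y_R - Y_\infty\|_{L^1} + \|\mathcal{S}^{\varepsilon,R}(t,r) Y_\infty - \mathcal{S}^\varepsilon(t,r) Y_\infty\|_{L^1},
\end{equation*}
together with the $L^1$-contraction applied to $\mathcal{S}^{\varepsilon,R}(t,r)$ for the first term and dominated/Fatou convergence for the second, drives the left-hand side to $0$. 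Since the left-hand side equals $\mathcal{S}^{\varepsilon,R}(t,s) X_0 - \mathcal{S}^\varepsilon(t,r)(\mathcal{S}^\varepsilon(r,s) X_0)$, which converges in $L^1$ to $\mathcal{S}^\varepsilon(t,s) X_0 - \mathcal{S}^\varepsilon(t,r)(\mathcal{S}^\varepsilon(r,s) X_0)$, the semi-group identity follows on the dense subspace and then on all of $L^1$ by density. The identity $\mathcal{S}^\varepsilon(s,s) = \mathrm{Id}$ is immediate from the representation formula, since the stochastic flow $\Psi_{s,s}^{\varepsilon,R}$ is the identity map for every $R$.

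The main obstacle is precisely the passage to the limit in the composition: the outer operator $\mathcal{S}^{\varepsilon,R}(t,r)$ is being applied to a moving target $Y_R$ that itself depends on $R$, so naive pointwise arguments do not suffice. The $L^1$-contraction proved in (i) is the essential quantitative tool that decouples this double limit, which is why the proof of (i) must logically precede that of (ii). A secondary technical point is ensuring that the pointwise stabilization used to validate Fatou indeed holds on a set of full $\mathbb{P} \otimes dx \otimes d\xi$ measure, which follows from the fact (established in the proof of Proposition \ref{existencenontrunc}) that $\mathbb{P}(\tau^R = T) \to 1$ as $R \to \infty$ for each fixed $(s,x,\xi)$.
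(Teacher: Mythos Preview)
Your approach is essentially the same as the paper's, which simply says the result is ``straightforward using the definition of $\mathcal{S}^{\varepsilon}$ and Proposition \ref{PropOp}.'' Your argument for (i) via Fatou's lemma on the pointwise-stabilizing sequence $\mathcal{S}^{\varepsilon,R}(t,s)X_0$ is exactly right and is the intended mechanism.

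For (ii), however, you over-engineer the argument and introduce a small slip. Writing ``pointwise stabilization plus Fatou yield $Y_R \to Y_\infty$ in $L^1$'' is not correct: Fatou gives only lower semicontinuity of the norm, not $L^1$ convergence, and you have not produced a dominating function. The good news is that the $L^1$ convergence is unnecessary. The semi-group identity is a \emph{pointwise} statement, and the stabilization established in the proof of Proposition \ref{existencenontrunc} is already strong enough to verify it directly: for almost every $(\omega,x,\xi)$ there exists $R_0$ such that for all $R\geq R_0$ the backward flows $\Psi^{\varepsilon,R}_{\cdot,\cdot}(x,\xi)$ coincide on the whole interval $[s,T]$ (and in particular the intermediate point $\Psi^{\varepsilon,R}_{r,t}(x,\xi)$ has its own stopping time equal to $T$). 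Hence both $\mathcal{S}^{\varepsilon,R}(t,s)X_0(\omega,x,\xi)$ and $\mathcal{S}^{\varepsilon,R}(t,r)\circ\mathcal{S}^{\varepsilon,R}(r,s)X_0(\omega,x,\xi)$ are eventually constant in $R$ and equal, so their limits agree. This pointwise route is what the paper's one-line proof has in mind; your telescoping estimate with the $L^1$-contraction is a valid alternative once the $L^1$ convergence is properly justified (e.g.\ via Scheff\'e's lemma, using that (i) actually gives equality of $L^1$ norms when $\mathrm{div}_x a \equiv 0$, or via a dominated-convergence argument on growing compacts), but it is more than what is needed here.
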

\noindent The proof is straightforward using the definition of $\mathcal{S}^{\varepsilon}$ and Proposition \ref{PropOp}.

\noindent We can conlude to the existence of a solution for the stochastic kinetic equation. 

\begin{proposition}
For any $\varepsilon >0$, there exists a weak solution of the stochastic BGK equation with a high field scaling denoted by $F_\varepsilon$. Moreover, $F_\varepsilon$ is represented by 
\begin{equation}
\label{represent_int}
\dsp{F_\varepsilon(t) = e^{-t/ \varepsilon} \mathcal{S}^\varepsilon(t,0) \mathbf{1}_{u_0> \xi} + \frac{1}{\varepsilon} \int_0^t e^{-\frac{t-s}{\varepsilon}} \mathcal{S}^\varepsilon(t,s)\mathbf{1}_{u_\varepsilon(s) > \xi}ds}.
\end{equation}
\end{proposition}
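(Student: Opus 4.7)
The plan is to obtain $F_\varepsilon$ as the unique fixed point of the Duhamel-type mapping \eqref{represent_int} via a Banach--Picard argument. Heuristically, the BGK equation \eqref{BGKsto2} can be reorganized as
\begin{equation*}
d\bigl(e^{t/\varepsilon}F_\varepsilon\bigr) + e^{t/\varepsilon}\mathcal{L}^\varepsilon F_\varepsilon\,dt + e^{t/\varepsilon}\Bigl(\partial_\xi F_\varepsilon\,\Phi\,dW - \tfrac{1}{2}\partial_\xi(G^2\partial_\xi F_\varepsilon)\,dt\Bigr) = \frac{1}{\varepsilon}e^{t/\varepsilon}\mathbf{1}_{u_\varepsilon>\xi}\,dt,
\end{equation*}
where $\mathcal{L}^\varepsilon$ gathers the transport part $div_x(a\,\cdot) + \frac{\Lambda}{\varepsilon}\partial_\xi(\cdot)$. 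Up to the factor $e^{t/\varepsilon}$, the left-hand side is precisely the generator of the auxiliary problem \eqref{Eqnontronque}, whose evolution operator $\mathcal{S}^\varepsilon(t,s)$ is provided by Proposition \ref{existencenontrunc}. A formal variation of constants then yields exactly the right-hand side of \eqref{represent_int}.

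To promote this heuristic into a rigorous existence proof, I would view \eqref{represent_int} as a fixed-point equation for the mapping
\begin{equation*}
\Gamma(u)(t,x) := \int_\R\left[e^{-t/\varepsilon}\mathcal{S}^\varepsilon(t,0)\mathbf{1}_{u_0>\xi} + \frac{1}{\varepsilon}\int_0^t e^{-(t-s)/\varepsilon}\mathcal{S}^\varepsilon(t,s)\mathbf{1}_{u(s)>\xi}\,ds - \mathbf{1}_{0>\xi}\right]d\xi,
\end{equation*}
acting on predictable $u$ in the space $\mathcal{X}:=L^\infty([0,T];L^1(\Omega\times\T^N))$. Well-posedness of $\Gamma$ relies on the $\xi$-integrability of $\mathcal{S}^\varepsilon(t,s)\mathbf{1}_{u>\xi}-\mathbf{1}_{0>\xi}$, which follows from the explicit representation of $\mathcal{S}^\varepsilon$ through the stochastic characteristic system \eqref{CaracTrunc} together with the moment control granted by assumption \eqref{hypo_g}; predictability is preserved since $\mathcal{S}^\varepsilon(t,s)$ is adapted to $(\mathcal{F}_r)_{r\geq s}$ and Fubini applies.

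The contraction estimate is the core of the argument. The elementary identity $\int_\R|\mathbf{1}_{u>\xi}-\mathbf{1}_{v>\xi}|\,d\xi=|u-v|$ combined with the unit $L^1$-operator norm of $\mathcal{S}^\varepsilon$ from Corollary \ref{propoperateur} yields
\begin{equation*}
\mathbb{E}\|\Gamma(u_1)(t)-\Gamma(u_2)(t)\|_{L^1(\T^N)}\leq \frac{1}{\varepsilon}\int_0^t e^{-(t-s)/\varepsilon}\,\mathbb{E}\|u_1(s)-u_2(s)\|_{L^1(\T^N)}\,ds,
\end{equation*}
so that $\Gamma$ is a strict contraction on $\mathcal{X}$ with factor $1-e^{-T/\varepsilon}<1$ in the norm $\sup_{t\in[0,T]}\mathbb{E}\|\cdot\|_{L^1(\T^N)}$. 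Banach's fixed point theorem produces a unique $u_\varepsilon\in\mathcal{X}$; setting $F_\varepsilon(t)$ equal to the bracketed expression inside $\Gamma(u_\varepsilon)$ then delivers the representation \eqref{represent_int}.

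It remains to verify that $F_\varepsilon$ so constructed is a weak solution in the sense of the corresponding definition. I would test \eqref{represent_int} against $\phi\in\mathcal{C}_c^\infty(\T^N\times\R)$ and exploit, for each fixed $s$, the weak formulation satisfied by $\mathcal{S}^\varepsilon(\cdot,s)\mathbf{1}_{u_\varepsilon(s)>\xi}$ that is granted by Corollary \ref{cont_tps}. The main obstacle I anticipate is the stochastic Fubini step required to commute the outer $ds$ integral of \eqref{represent_int} with the It\^o integral hidden inside $\mathcal{S}^\varepsilon(t,s)$; this calls for the same type of integrability estimates used in the proof of Proposition \ref{convergenceFeps}, where \eqref{hypo_g} together with the $L^\infty$-bound on $F_\varepsilon$ was explicitly leveraged to invoke \cite[Theorem 4.18]{Daprato}.
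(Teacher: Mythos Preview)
Your approach is essentially that of the paper: Duhamel/variation of constants, then a Banach fixed-point argument driven by the $L^1$-contractivity of $\mathcal{S}^\varepsilon$ from Corollary~\ref{propoperateur} together with the identity $\int_\R|\mathbf{1}_{u>\xi}-\mathbf{1}_{v>\xi}|\,d\xi=|u-v|$. The paper runs the fixed point on the kinetic function $h_\varepsilon$ rather than on the macroscopic density $u$, but this is cosmetic since the two are linked by the $\xi$-integral, and the contraction factor $1-e^{-T/\varepsilon}$ is the same in both formulations.

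One technical difference is worth flagging. The paper does not subtract the raw $\mathbf{1}_{0>\xi}$ as you do; it subtracts the evolved reference $\mathcal{S}^\varepsilon(t,0)\mathbf{1}_{0>\xi}$, setting $h_\varepsilon:=F_\varepsilon-\mathcal{S}^\varepsilon(t,0)\mathbf{1}_{0>\xi}$. This choice makes the integrability in $\xi$ immediate, because every term in the Duhamel formula for $h_\varepsilon$ is of the form $\mathcal{S}^\varepsilon(t,s)$ applied to an $L^1$ function (namely $\chi_{u_0}$ or $\mathbf{1}_{u_\varepsilon(s)>\xi}-\mathcal{S}^\varepsilon(s,0)\mathbf{1}_{0>\xi}$), and the unit operator norm applies directly. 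With your subtraction, the well-definedness of $\Gamma$ additionally requires that $\mathcal{S}^\varepsilon(t,s)\mathbf{1}_{0>\xi}-\mathbf{1}_{0>\xi}\in L^1_\xi$. This is true, via moment bounds on the displacement of the stochastic characteristic $\varphi^{\varepsilon,0}_{s,t}(x,\xi)-\xi$ (of the type established in the proof of Proposition~\ref{existencenontrunc}), but it is not a consequence of assumption~\eqref{hypo_g} as you indicate: that hypothesis concerns the decay of $\xi\,\partial_\xi g_k$ and plays no role here. The paper's shift by the evolved indicator sidesteps this extra step entirely, which is why it is the natural normalization.
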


\begin{proof}[Sketch of the proof]
Noticing that $F_\varepsilon$ is not integrable with respect to $\xi$, we introduce the process $h_\varepsilon(t) := F_\varepsilon - \mathcal{S}^\varepsilon(t,0) \mathbf{1}_{0 > \xi}$. Then, by Proposition \ref{existencenontrunc}, we get that $h_\varepsilon$ solves the following system
\begin{equation}
\label{BGKh}
  \left\{ 
\begin{array}{l}
 \dsp{d h^\varepsilon + div_x ( a(x,\xi) h^\varepsilon) dt + \frac{\Lambda(x)}{\varepsilon} \partial_\xi h^\varepsilon dt = \frac{(\mathbf{1}_{u_\varepsilon> \xi} - \mathcal{S}^\varepsilon(t,0) \mathbf{1}_{0 > \xi}) - h^\varepsilon}{\varepsilon} dt - \partial_\xi h^\varepsilon \Phi dW_t}\\
 \dsp{~~~~~~~~~~~~~~~~~~~~~~~~~~~~~~~~~~~~~~~~~~~~~~~~~~~~~~~~~~~~~~~~~~~~~~~~~~~~~~~~~~~~~~~~~~~~~~~+ \frac{1}{2} \partial_\xi (G^2 \partial_\xi h^{\varepsilon}) dt,}  \\ 
h_\varepsilon(0)= \chi_{u_0}.
\end{array}
\right. 
\end{equation}
The proof is then quite classical. Indeed, using Duhamel's formula, the problem \eqref{BGKh} can be rewritten as follows
\begin{equation}
h_\varepsilon(t)=e^{-t/\varepsilon} \mathcal{S}^\varepsilon(t,0) \chi_{u_0} + \frac{1}{\varepsilon} \int_0^t e^{-\frac{t-s}{\varepsilon}} \mathcal{S}^\varepsilon(t,s)\croch{\mathbf{1}_{u_\varepsilon(s)> \xi} - \mathcal{S}^\varepsilon(s,0) \mathbf{1}_{0 > \xi}}ds
\end{equation}
and it is then reduced to a fixed point method. We define the mapping $\mathcal{L}$: 
\begin{equation}
(\mathcal{L}g)(t)=e^{-t/s}  \mathcal{S}^\varepsilon(t,0) \chi_{u_0} + \frac{1}{\varepsilon} \int_0^t e^{-\frac{t-s}{\varepsilon}} \mathcal{S}^\varepsilon(t,s)\croch{\mathbf{1}_{v(s)> \xi} - \mathcal{S}^\varepsilon(s,0) \mathbf{1}_{0 > \xi}}ds
\end{equation}
where $v(s)=\int_\R (g(s,\xi) + \mathcal{S}^\varepsilon(s,0) \mathbf{1}_{0 > \xi} - \mathbf{1}_{0 > \xi} )d\xi$. We show that the mapping $\mathcal{L}$ is a contraction on $L^\infty(0,T;L^1(\Omega \times \T^N))$ using Corollary \ref{propoperateur} and assumptions on the initial data (see \cite{Hof} for more details).  Then we can conclude to the existence of a unique fixed point $h_\varepsilon$ in $L^\infty(0,T;L^1(\Omega \times \T^N))$. Moreover, using the properties of the solution operator of Corollary \ref{propoperateur} and using the fact that $\chi_{u_0} = \mathbf{1}_{u_0> \xi} -  \mathbf{1}_{0 > \xi}$, we finally obtain
\begin{equation}
\begin{array}{rcl}
F_\varepsilon(t) & = &\dsp{ h_\varepsilon(t) + \mathcal{S}^\varepsilon(t,0) \mathbf{1}_{0 > \xi}}\\
& = & \dsp{e^{-t/ \varepsilon} \mathcal{S}^\varepsilon(t,0) \mathbf{1}_{u_0> \xi} + \frac{1}{\varepsilon} \int_0^t e^{-\frac{t-s}{\varepsilon}} \mathcal{S}^\varepsilon(t,s)\mathbf{1}_{u_\varepsilon(s) > \xi}ds}
\end{array}
\end{equation}
which concludes the proof. 
\end{proof}

\begin{remark}
As a consequence of Corollary \ref{cont_tps}, $F_\varepsilon(t)$ satisfies the following : $t \mapsto <F_\varepsilon(t), \phi>$ is a continuous $(\mathcal{F}_t)$- semimartingale for any $\phi \in \mathcal{C}_c^\infty(\T^N \times \R)$.
\end{remark}

 ~~~\\

\noindent \textbf{Acknowledgment.} The author thanks F. Berthelin and F. Delarue for many helpful discussions. 

\bibliographystyle{plain}
\bibliography{Biblio}

\noindent Nathalie Ayi\\
Laboratory J.A. Dieudonn\'e\\
UMR CNRS-UNS $N^0$ $7351$\\
University of Nice Sophia-Antipolis\\
Parc Valrose\\
06108 Nice Cedex 2\\
France\\

\noindent Member of the COFFEE team (INRIA)\\
\noindent e-mail: nathalie.ayi@unice.fr

\end{document}